\documentclass{amsart}
\usepackage{color}
\usepackage{amsmath}
\usepackage{amssymb}
\usepackage{amsthm}
\usepackage{amscd}
\usepackage{eucal} 
\usepackage{enumitem}
\usepackage{tikz}
\usepackage{hyperref}
\usepackage[utf8]{inputenc}
\usepackage[all]{xy}
\hypersetup{colorlinks=true}

\theoremstyle{plain}
\newtheorem{theorem}{Theorem}[section]

\newtheorem{lemma}[theorem]{Lemma}
\newtheorem{proposition}[theorem]{Proposition}
\newtheorem{corollary}[theorem]{Corollary}


\theoremstyle{definition}
\newtheorem{definition}[theorem]{Definition}
\newtheorem{remark}[theorem]{Remark}
\newtheorem{example}[theorem]{Example}

\numberwithin{equation}{section}

\newcommand\fantome[1]{}

\def\bL{\mathbb L}
\def\bN{\mathbb N}

\def\bC{\mathbb C}

\def\bT{\mathbb T}

\def\cF{\mathcal F}

\def\calM{\mathcal M}

\def\Fq{\mathbb F_q}
\def\lra{\longrightarrow}

\newcommand{\LL}{\mathbb{L}}

\newcommand{\iso}{\overset{\sim}{\longrightarrow}}

\newcommand{\fLis}{\mathfrak{Li}^{*}}

\DeclareMathOperator{\ev}{ev}
\DeclareMathOperator{\Log}{Log}

\DeclareMathOperator{\Lie}{Lie}
\DeclareMathOperator{\Li}{Li}
\DeclareMathOperator{\fLi}{\mathfrak{Li}}
\DeclareMathOperator{\fL}{\mathfrak{L}}


\newcommand{\power}[2]{{#1 [[ #2 ]]}}

\newcommand{\F}{\mathbb{F}}
\newcommand{\C}{\mathbb{C}}

\newcommand{\bu}{\mathbf{u}}
\newcommand{\bff}{\mathbf{f}}
\newcommand{\bm}{\mathbf{m}}

\newcommand{\bv}{\mathbf{v}}

\newcommand{\bz}{\mathbf{z}}
\newcommand{\by}{\mathbf{y}}
\newcommand{\bx}{\mathbf{x}}

\newcommand{\bw}{\mathbf{w}}

\newcommand{\fs}{\mathfrak{s}}
\newcommand{\fQ}{\mathfrak{Q}}

\newcommand{\inv}{\ensuremath ^{-1}}
\newcommand{\isom}{\ensuremath \cong}

\newcommand{\TT}{\mathbb{T}}
\newcommand{\N}{\ensuremath \mathbb{N}}

\DeclareMathOperator{\Exp}{Exp}

\DeclareMathOperator{\Mat}{Mat}

\newcommand{\twist}{^{(1)}}

\newcommand{\twistinv}{^{(-1)}}
\newcommand{\twisti}{^{(i)}}
\newcommand{\twistk}[1]{^{(#1)}}

\definecolor{ForestGreen}{rgb}{0.0, 0.5, 0.0}




\author{Nathan Green}
\address{Department of Mathematics\\
University of California, San Diego (UCSD)\\
9500 Gilman Drive 0112\\
La Jolla, CA  92093-0112\\
United States of America (USA)\\}
\email{n2green@ucsd.edu}

\author{Tuan Ngo Dac}
\address{
CNRS - Université Claude Bernard Lyon 1, 
Institut Camille Jordan, UMR 5208,
43 boulevard du 11 novembre 1918,
69622 Villeurbanne Cedex, France
}
\email{ngodac@math.univ-lyon1.fr}

\title[Log-algebraicity and MZV's]{On log-algebraic identities for Anderson $t$-modules and characteristic $p$ multiple zeta values}


\setcounter{tocdepth}{1}

\parskip 3pt

\begin{document}

\begin{abstract}
Based on the notion of Stark units we present a new approach that obtains refinements of log-algebraic identities for Anderson $t$-modules. As a consequence we establish a generalization of Chang's theorem on logarithmic interpretations for special characteristic $p$ multiple zeta values (MZV's) and recover many earlier results in this direction. Further, we devise a direct and conceptual way to get logarithmic interpretations for both MZV's and $\nu$-adic MZV's. This generalizes completely the work of Anderson and Thakur for Carlitz zeta values.
\end{abstract}

\subjclass[2010]{Primary 11G09; Secondary 11M32, 11M38, 11R58}

\keywords{Drinfeld modules, Anderson $t$-modules, $t$-motives, multiple zeta values, $L$-series in characteristic $p$, log-algebraicity, Stark units}

\date{\today}

\maketitle

\tableofcontents


\section{Introduction}

\subsection{Background} ${}$\par



The power-series $\sum_{n \geq 1} \frac{z^n}{n}$ is log-algebraic:
	\[ \sum_{n \geq 1} \frac{z^n}{n}=-\log(1-z). \]
This identity allows one to obtain the value of a Dirichlet $L$-series at $s=1$ as an algebraic linear combination of logarithms of circular units. 


By a well-known analogy between the arithmetic of number fields and that of global function fields, conceived of in the 1930s by Carlitz, we now switch to the function field setting. We briefly recount some of the many advances which have been made in function field arithmetic.  In particular we will focus on the study of special values of Goss $L$-functions and their generalizations, like Thakur's characteristic $p$ multiple zeta values (MZV's for short).  Especially, we wish to highlight the reliance many of these results have on log-algebraic identities. 

We let $A=\mathbb F_q[\theta]$ with $\theta$ an indeterminate over a finite field $\mathbb F_q$. In the 1930's Carlitz \cite{Car35} introduced the Carlitz zeta values $\zeta_A(n)$ for $n \in \N$, which are analogues of positive special values of the Riemann zeta function, $\zeta(n)$. He then related the zeta value $\zeta_A(1)$ to the so-called Carlitz module $C$. One of his fundamental theorems gave a log-algebraic identity
	\[ \exp_C(\zeta_A(1))=1 \]
where $\exp_C$ is the exponential series attached to the Carlitz module. We mention that Goss \cite{Gos79} introduced a new type of $L$-functions in the arithmetic of function fields over finite fields and showed that Carlitz zeta values can be realized as special values of such $L$-functions (see \cite{Gos96}, Chapter 8).

In the 1970's Drinfeld \cite{Dri74,Dri77} made a breakthrough and defined Drinfeld modules even for a more general ring $A$. It turned out that the Carlitz module is the simplest example of a Drinfeld module. Several years later Anderson \cite{And86} developed the theory of $t$-modules which are higher dimensional generalizations of Drinfeld modules. 

Since the introduction of $t$-modules, several additional log-algebraic identities for Anderson $t$-modules have been discovered. The theory began with the seminal paper of Anderson and Thakur \cite{AT90} where they proved log-algebraic identities for tensor powers $C^{\otimes n}$ $(n \in \N)$ of the Carlitz module. The latter result implies logarithmic interpretations for Carlitz zeta values $\zeta_A(n)$ at positive integers $n$ generalizing the aforementioned result of Carlitz. Combining the above result with his transcendence theory, Yu \cite{Yu91} proved that $\zeta_A(n)$ is transcendental for all positive integers $n$. Based on the criteria for linear and algebraic independence developed by Jing Yu \cite{Yu97}, Anderson-Brownawell-Papanikolas \cite{ABP04} and Papanikolas \cite{Pap08}, Chang and Yu \cite{CY07,Yu97} determined all algebraic relations among the Carlitz zeta values. These results are very striking when compared to the extremely limited knowledge we have about the transcendence of odd Riemann zeta values in the classical setting.

In recent years various works have revealed the importance of log-algebraicity on Anderson $t$-modules in function field arithmetic. On the one hand, following the pioneering work of Anderson \cite{And96} in which he introduced the analogue of cyclotomic units for the Carlitz module, Anglès, Tavares Ribeiro and the second author have developed the theory of Stark units for Anderson modules which turns out to be a powerful tool for investigating log-algebraicity. Roughly speaking, they are units in the sense of Taelman \cite{Tae10,Tae12a} coming from the canonical deformation of Drinfeld modules in Tate algebras in the sense of Pellarin \cite{Pel12}. Note that the concept of Stark units appeared implicitly in \cite{APTR16,APTR18}. The notion was formalized in \cite{ATR17} for Drinfeld modules over $\F_q[\theta]$ and then further developed in more general settings in \cite{ANDTR17a,ANDTR20,ANDTR20a}. Recently, combining Stark units and the class formula à la Taelman, Anglès, Tavares Ribeiro and the second author \cite{ANDTR20a} obtained various log-algebraicity results for tensor powers of the Carlitz module, generalizing the work of Anderson-Thakur \cite{AT90} and recovering that of Papanikolas \cite{Pap}. On the other hand, log-algebraicity has been successfully applied to the study of Goss's zeta values and Thakur's characteristic $p$ multiple zeta values (MZV's). For example, using log-algebraicity Chang \cite{Cha16} completely determined linear relations among depth-two MZV's, the authors \cite{GND20} generalized the work of Chang and Yu \cite{CY07} by completely determining algebraic relations among Goss's zeta values on function fields of elliptic curves, and Chang and Mishiba \cite{CM19b} proved a conjecture of Furusho concerning MZV's and their $\nu$-adic variants over function fields. 

In the present paper, based on the notion of Stark units, we introduce a new approach to obtain refinements of log-algebraic identities for Anderson $t$-modules. One of the main benefits of our approach is that it provides a concise general theory on the existence of log-algebraic identities, and thus it gives a unifying framework to many such previous results which have been proven in a somewhat ad-hoc fashion.  To demonstrate the unification our new techniques allow, we use them to recover many previously known results in a straightforward way, and in some cases our techniques even lead to stronger results.  We also apply our techniques to prove new formulas relating to characteristic $p$ MZV's.

For applications of our new techniques, we first investigate the dual $t$-motives introduced by Anderson and Thakur \cite{AT09} and developed further by Chang, Mishiba, Papanikolas, Yu and the first author (see \cite{Cha14,Cha16,CGM19,CM19,CM19b,CPY19}). Our main result yields log-algebraic identities for the $t$-modules attached to these dual $t$-motives. Next we obtain a generalization of one of the main theorems of Chang in \cite{Cha16} where he presented very simple and elegant logarithmic interpretations for special cases of MZV's. Along the way, we clarify connections between these $t$-modules and MZV's and recover many results in \cite{CGM19,CM19,CM19b,CPY19}. Finally we devise new dual $t$-motives called star motives which provide direct logarithmic interpretations for both MZV's and $\nu$-adic MZV's in the same spirit of the original work of Anderson and Thakur. This generalizes completely the work of Anderson and Thakur \cite{AT90} and answers positively to a problem raised by Chang and Mishiba \cite{CM19b}.  

\subsection{Statement of the main result} ${}$\par

Let us give now more precise statements of our results.

Let $A=\Fq[\theta]$ be the polynomial ring in the variable $\theta$ over a finite field $\Fq$ of $q$ elements of characteristic $p>0$. Let $K=\Fq(\theta)$ be the fraction field of $A$ equipped the rational point $\infty$. Let $K_\infty$ be the completion of $K$ at $\infty$ and $\C_\infty$ be the completion of a fixed algebraic closure $\overline K$ of $K$ at $\infty$. Letting $t$ be another independent variable, we denote by $\TT$ the Tate algebra in the variable $t$ with coefficients in $\C_\infty$ and by $\bL$ the fraction field of $\bT$.

Let $\overline K[\tau]$ (resp. $\overline K[\sigma]$) denote the non-commutative skew-polynomial ring with coefficients in $\overline K$, subject to the relation for $c \in \overline K$,
\[\tau c = c^q\tau \quad \text{(resp. $\sigma c = c^{1/q} \sigma)$}.\]
We define Frobenius twisting on $\overline K[t]$ by setting for $g = \sum_j c_j t^j \in  \overline K[t]$, 
\begin{equation*}
g\twisti = \sum_j c_j^{q^i} t^j.
\end{equation*}
We extend twisting to matrices in $\Mat_{i\times j}(\overline K[t])$ by twisting coordinatewise.  

We will work with effective dual $t$-motives and Anderson $t$-modules introduced by Anderson (see \cite{And86,BP20,HJ20}). In what follows, we let $\calM'$ denote an effective dual $t$-motive in the sense of \cite[\S 4]{HJ20}, which is a $\overline K[t,\sigma]$-module that is free and finitely generated over $\overline K[t]$ such that for $\ell\gg 0$ we have $(t-\theta)^\ell(\calM'/\sigma \calM') = \{0\}$. Letting $\bm=\{m_1,\ldots,m_r\}$ be a $\overline K[t]$-basis of $\calM'$, then there exists a unique matrix $\Phi' \in \Mat_r(\overline K[t]) \cap \text{GL}_r(\overline K(t))$ such that 
	\[ \sigma \bm=\Phi' \bm. \]
We suppose further that $\calM'$ is free and finitely generated over $\overline K[\sigma]$ and that $\calM'$ is uniformizable or rigid analytically trivial, which means that there exists a matrix $\Psi' \in \text{GL}_r(\bL)$ satisfying $\Psi'^{(-1)}=\Phi' \Psi'$.
	
Anderson associated to $\calM'$ an Anderson $t$-module $E'$ defined over $\overline K$ (see \cite[\S 5.2]{HJ20}). This is an $\Fq$-algebra homomorphism $E':A \lra \Mat_d(\overline K)[\tau]$ for some $d \in \N$ (called the dimension of $E'$) such that for all $a \in A$, if we write 
	\[ E'_a = d[a]+E'_{a,1} \tau+\ldots, \] 
then we have $(d[a]-aI_d)^d=0$.  Note that for any $\overline K$-algebra $B$, we can define two $A$-module structures on $B^d$: the first one is denoted by $E(B)$ where $A$ acts on $B^d$ via $E$, and the second one is denoted by $\Lie_E(B)$ where $A$ acts on $B^d$ via $d[\cdot]$.

The association of $E'$ with $\calM'$ comes with two canonical maps (see \eqref{D:deltaextendedmaps})
\begin{align*}
\delta_0: \calM' \to \Mat_{d \times 1}(\overline K), \quad \delta_1: \calM' \to \Mat_{d \times 1}(\overline K),
\end{align*}
which extend to $\calM' \otimes_{\overline K[t]} \bT \simeq \Mat_{r \times 1}(\TT)$ in the natural way.
One can show that there exists a unique exponential series $\Exp _{E'} \in I_d+\tau \Mat_d(\overline K)[[\tau ]]$ associated to $E'$ such that
	\[ \Exp_{E'} d[a] = E'_a \Exp_{E'}, \quad a\in A. \]
The logarithm function $\Log_{E'}$ is then defined as the formal power series inverse of $\Exp_{E'}$. We note that as functions on $\C_\infty^d$ the function $\Exp_{E'}$ is everywhere convergent, whereas $\Log_{E'}$ has some finite radius of convergence.

Let $\calM \in \textrm{Ext}^1_\cF(\mathbf{1},\calM')$ be the effective dual $t$-motive given by the matrix
\begin{align*}
\Phi=\begin{pmatrix}
\Phi' & 0  \\
\bff & 1 
\end{pmatrix}, \quad \text{with } \bff=(f_1,\dots,f_r)\in \Mat_{1\times r}(\overline K[t]).
\end{align*}
Let $\Psi$ be a rigid analytic trivialization such that we can write
\begin{align*}
\Psi=\begin{pmatrix}
\Psi' & 0  \\
\Psi_{r+1} & 1 
\end{pmatrix} \in \text{GL}_{r+1}(\bL), \quad \text{with } \Psi_{r+1} \in \Mat_{1\times r}(\bL),
\end{align*}
and 
\begin{align*} 
\Upsilon:=\Psi\inv=\begin{pmatrix}
\Upsilon' & 0  \\
\Upsilon_{r+1} & 1 
\end{pmatrix} \in \Mat_{r+1}(\bT), \quad \text{with } \Upsilon_{r+1} \in \Mat_{1\times r}(\TT).
\end{align*}
Note that by \cite[Proposition 3.3.9]{Pap08} there exists a polynomial $F \in \Fq[t]$ such that $F \Psi \in \Mat_{r+1}(\bT)$.

Inspired by \cite{CPY19} we construct a point $\delta_1  (\bff^\top) = \bv_{\calM} \in E'(\overline K)$ (see \eqref{D:vMdef}) associated to the extension $\calM \in \textrm{Ext}^1_\cF(\mathbf{1},\calM')$.

We now recall the notion of units and Stark units. We mention that the former was introduced by Taelman in \cite{Tae10} and the latter has been introduced and developed in \cite{ANDTR17a,ANDTR20,ANDTR20a,ATR17} following the pioneering work of Anderson \cite{And96} in which he introduced the analogue of cyclotomic units for the Carlitz module. The notion of Stark units turns out to be a powerful tool for arithmetic applications including log-algebraic identities and Taelman's class formula. Let $z$ be an indeterminate with $\tau z=z \tau$ and let $\bT_z(\C_\infty)$ be the Tate algebra in the variable $z$ with coefficients in $\C_\infty$. We define the canonical $z$-deformation $\widetilde E'$, which is called the $t$-module defined over a Tate algebra in the sense of Pellarin \cite{Pel12} (see also \cite{APTR16,APTR18}). It is the homomorphism of $\Fq[z]$-algebras $\widetilde E': A[z] \to \Mat_d(\overline K[z])[\tau]$ such that 
	\[ \widetilde E'_a =\sum_{k\geq 0} E'_{a,k} z^k \tau ^k, \quad a \in A. \]
Then there exists a unique series $\Exp_{\widetilde E'}\in I_d+\tau \Mat_d(\overline K[z])\{\{\tau \}\}$ such that
	\[ \Exp_{\widetilde E'} d[a] = \widetilde E'_a \Exp_{\widetilde E'}, \quad a \in A. \]
One can show that if we write $\Exp_{E'}=\sum_{i\geq 0} Q_i \tau ^i$, then $\Exp_{\widetilde E'}=\sum_{i\geq 0} Q_i z^i \tau ^i$. Thus $\Exp_{\widetilde E'}$ converges on $\Lie_{\widetilde E'}(\bT_z(\C_\infty))$ and induces  a homomorphism of $A[z]$-modules 
	\[ \Exp_{\widetilde E'}:\Lie_{\widetilde E'}(\bT_z(\C_\infty))\to \widetilde E'(\bT_z(\C_\infty)). \]
We denote by $\Log_{\widetilde E'} \in I_d+\tau \Mat_d(\overline K[z])[[\tau ]]$ the inverse of $\Exp_{\widetilde E'}$. 

\begin{definition}[Definitions \ref{definition: log_St} and \ref{definition: split log}]
Letting $\bz \in \C_\infty^d$, we put $\mathbf{Z}:=\Exp_{E'}(\bz)$. 

1) Following Taelman \cite{Tae10} we say that $\bz$ is {\it a unit for $E'$} if $\mathbf{Z}=\Exp_{E'}(\bz) \in \overline K^d$. The latter equality is also called {\it a log-algebraic identity for $E'$}.

2) We say that $\bz$ is {\it a Stark unit for $E'$} if we can write $\bz=\bx \big|_{z=1}$ for some $\bx \in \Lie_{\widetilde E'}(\bT_z(\C_\infty))$ satisfying $\Exp_{\widetilde E'}(\bx) \in \overline K[z]^d$.  We also say that we have {\it a Stark logarithmic identity for $\bz$}, and we write
	\[ \Log_{E'}^{St}(\mathbf{Z}) = \bz.\]
	
3) Suppose that there exists some finite collection of pairs $\{(a_i,\bu_i)\}\subset A\times \C_\infty^d$ where each $\bu_i$ is in the domain of convergence of $\Log_{E'}$, such that
\begin{align*}
\bz &= \sum_{i} d[a_i] \Log_{E'}(\bu_i),
\end{align*}
we will say that we have {\it a split-logarithmic identity for $\bz$}, and we write
\[\Log_{E'}^!(\mathbf{Z}) = \bz.\]
\end{definition}

We mention that Stark logarithmic identities and split-logarithmic identities are much stronger than log-algebraic identities and could be viewed as refinements thereof. We emphasize that compared to log-algebraic identities (resp. units), Stark logarithmic identities (resp. Stark units) allows one to bypass convergence issues arising from logarithmic series and to ``deal directly" with logarithms. In recent years the notion of Stark units has been successfully applied to achieve important results related to special values of the Goss $L$-functions, characteristic $p$ multiple zeta values, Anderson's log-algebraicity identities, Taelman's units, and Drinfeld modular forms in Tate algebras (see \cite{ANDTR17a,ANDTR20,ANDTR20a,APTR16,APTR18,ATR17,Gre17b,GND20,GP18}).

We also comment that split-logarithmic identities are stronger than Stark logarithmic identities. They are common when one discovers a log-algebraic identity of the form $\Exp_{E'}(\bz) = \mathbf{Z}$, but $\mathbf{Z}$ is not inside the domain of convergence of $\Log_{E'}$. Thakur \cite{Tha92} suggested that one can decompose $\mathbf{Z}$ into a sum of terms $E'_{a_i}\bu_i$, such that each $\bu_i$ is inside the domain of convergence of $\Log_{E'}$.  Such a decomposition is the motivation for the above definition of split-logarithmic identity.  This is the case in the celebrated log-algebraicity theorem of Anderson and Thakur for tensor powers of the Carlitz module \cite[Theorem 3.8.3]{AT90} (see also \cite[\S II]{Tha92}).  

We are now ready to state the main result of our paper (see \S \ref{sec: main theorem} for more details).

\begin{theorem}[Theorem \ref{theorem: main theorem}] \label{theorem: Intro main theorem}
Let $\Psi$, $\Upsilon$ and $f$ be defined as above.  Let $F \in \Fq[t]$ such that $F \Psi \in \Mat_{r+1}(\bT)$. We suppose that $F\Psi_{r+1}^{(k)}$ converges to $0$ as a vector of functions in $\Mat_{1\times r}(\TT)$. 

Then the point $\delta_0(\bff^\top-\Upsilon_{r+1}^\top)$ is a Stark unit for $E'$ and we have a Stark logarithmic identity
	\[ \Log^{\text{St}}_{E'}(\bv_{\calM})=\delta_0(\bff^\top-\Upsilon_{r+1}^\top). \]
Further, if the point $\bv_{\calM}$ satisfies some mild convergence conditions, then we have a split-logarithmic identity 
	\[ \Log^!_{E'}(\bv_{\calM})=\delta_0(\bff^\top-\Upsilon_{r+1}^\top). \]
\end{theorem}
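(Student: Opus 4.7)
The plan is to construct an explicit Stark unit lift of $\delta_0(\bff^{\top}-\Upsilon_{r+1}^{\top})$ living in the canonical $z$-deformation $\widetilde{E}'$, and then verify the defining exponential identity.

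\emph{Step 1: A telescoping recursion.}  Spelling out the block form of $\Psi^{(-1)}=\Phi\Psi$ in the last row yields $\Psi_{r+1}^{(-1)}=\bff\Psi'+\Psi_{r+1}$.  Combining this with $\Upsilon_{r+1}=-\Psi_{r+1}(\Psi')^{-1}$ and the twist identity $(\Psi')^{(-1)}=\Phi'\Psi'$ produces the fundamental recursion
\[
\Upsilon_{r+1}-\bff=\Upsilon_{r+1}^{(-1)}\Phi'.
\]
Iterating gives the convergent expansion
\[
\Upsilon_{r+1}=\sum_{k\geq 0}\bff^{(-k)}P_k,\qquad P_0=I,\ P_k=(\Phi')^{(-k+1)}\cdots\Phi'\ \text{for}\ k\geq 1,
\]
as an element of $\Mat_{1\times r}(\bT)$, where the tail control is provided precisely by the hypothesis $F\Psi_{r+1}^{(k)}\to 0$ through the identity $F\Upsilon_{r+1}=-F\Psi_{r+1}(\Psi')^{-1}$ and its twists.

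\emph{Step 2: Building the candidate via $z$-deformation.}  Weight the $k$-th term of the expansion by $z^k$ to obtain
\[
\widetilde{\Upsilon}_{r+1}:=\sum_{k\geq 0}z^k\,\bff^{(-k)}P_k\in\Mat_{1\times r}(\bT_z(\C_{\infty})),
\]
which converges under the standing hypothesis and specializes at $z=1$ to $\Upsilon_{r+1}$.  Set
\[
\bx:=\delta_0(\bff^{\top}-\widetilde{\Upsilon}_{r+1}^{\top})\in\Lie_{\widetilde{E}'}(\bT_z(\C_{\infty})),
\]
so $\bx|_{z=1}=\delta_0(\bff^{\top}-\Upsilon_{r+1}^{\top})$ as required.

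\emph{Step 3: Matching the exponential.}  The crucial compatibility (a deformation of the standard $\delta_0/\delta_1$ formalism from \cite{HJ20}, cf.\ \cite{ANDTR20a}) is that on $\calM'\otimes\bT$ one has $\Exp_{\widetilde{E}'}\circ\delta_0=\delta_1$ modulo the telescoping provided by the $\sigma$-action, with the $z$-weighting on the deformed side matching the $\tau$-weighting in $\Exp_{\widetilde{E}'}=\sum Q_i z^i\tau^i$.  Applied to $\bff^{\top}-\widetilde{\Upsilon}_{r+1}^{\top}$, the recursion of Step~1 causes the $\widetilde{\Upsilon}_{r+1}^{\top}$-contribution to collapse term by term, leaving
\[
\Exp_{\widetilde{E}'}(\bx)=\delta_1(\bff^{\top})=\bv_{\calM}\in\overline{K}^d\subset\overline{K}[z]^d,
\]
the last containment because $\bff^{\top}\in\Mat_{r\times 1}(\overline{K}[t])$ is $z$-independent.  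This exhibits $\bx$ as a Stark unit, and specializing $z=1$ yields $\Log_{E'}^{\text{St}}(\bv_{\calM})=\delta_0(\bff^{\top}-\Upsilon_{r+1}^{\top})$.

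\emph{Step 4: From Stark logarithm to split logarithm.}  Assume the mild convergence hypothesis on $\bv_{\calM}$.  Following Thakur's decomposition idea \cite{Tha92}, choose $\{(a_i,\bu_i)\}\subset A\times\C_{\infty}^d$ with each $\bu_i$ inside the domain of convergence of $\Log_{E'}$ and $\bv_{\calM}=\sum_i E'_{a_i}(\bu_i)$.  The Stark logarithmic identity at $z=1$ then promotes, via the $A$-linearity $d[a_i]\Log_{E'}(\bu_i)=\Log_{E'}(E'_{a_i}\bu_i)$ and the uniqueness of the exponential preimage modulo the kernel, to the split-logarithmic identity $\sum_i d[a_i]\Log_{E'}(\bu_i)=\delta_0(\bff^{\top}-\Upsilon_{r+1}^{\top})$.

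The main obstacle is \emph{Step 3}: showing that $\Exp_{\widetilde{E}'}(\bx)$ actually lands in $\overline{K}[z]^d$ rather than just in $\bT_z(\C_{\infty})^d$.  This is where the interaction between the $\sigma$-telescoping for $\widetilde{\Upsilon}_{r+1}$ and the $\tau$-expansion of $\Exp_{\widetilde{E}'}$ must be handled precisely, and where the analytic hypothesis $F\Psi_{r+1}^{(k)}\to 0$ is needed to justify the necessary rearrangements of infinite series.
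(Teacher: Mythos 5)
Your plan---construct an explicit element of $\Lie_{\widetilde{E}'}(\bT_z(\C_\infty))$ whose $\Exp_{\widetilde{E}'}$-image lands in $\overline K[z]^d$, by $z$-deforming a convergent expansion of $\Upsilon_{r+1}$---has the right flavor, but two of the four steps contain genuine errors or unproven assertions, and once these are fixed the argument collapses into the paper's own.

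\textbf{Step 1 iterates the recursion in the wrong direction.}  From $\Upsilon_{r+1}-\bff=\Upsilon_{r+1}^{(-1)}\Phi'$ you can solve either for $\Upsilon_{r+1}$ in terms of $\Upsilon_{r+1}^{(-1)}$ (your choice, tail $\Upsilon_{r+1}^{(-N-1)}P_{N+1}$) or for $\Upsilon_{r+1}$ in terms of $\Upsilon_{r+1}^{(1)}$ (tail $\Upsilon_{r+1}^{(N)}(\Phi'^{(N)})^{-1}\cdots(\Phi'^{(1)})^{-1}$).  A short computation using $\Upsilon_{r+1}=-\Psi_{r+1}\Upsilon'$ and $(\Psi')^{(-1)}=\Phi'\Psi'$ shows the first tail equals $-\Psi_{r+1}^{(-N-1)}\Upsilon'$ and the second equals $-\Psi_{r+1}^{(N)}\Upsilon'$.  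The hypothesis gives control only on $F\Psi_{r+1}^{(k)}$ for $k\to+\infty$ (this is exactly what the paper uses to make the $z^n$-coefficients, $n\geq 1$, tend to $0$), so only the \emph{positive} iteration $\Upsilon_{r+1}=-\sum_{n\geq 1}\bff^{(n)}(\Phi'^{(n)})^{-1}\cdots(\Phi'^{(1)})^{-1}$ converges.  Your $\sum_{k\geq 0}\bff^{(-k)}P_k$ with negative twists of $\bff$ and positive powers of $\Phi'$ does not converge under the stated hypothesis.  Moreover the $z$-weighting in Step 2 inherits this flaw: in $\widetilde E'$ (and in $\Log_{\widetilde{E}'}=\sum P_nz^n\tau^n$) the variable $z$ accompanies a \emph{positive} Frobenius twist, so weighting a negative twist by $z^k$ is incompatible with the Tate-algebra structure on $\Lie_{\widetilde{E}'}(\bT_z(\C_\infty))$.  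If you redo Steps 1--2 with the correct (positive) iteration you find $\bff^\top-\widetilde{\Upsilon}_{r+1}^\top=\sum_{n\geq 0}z^n\,\Theta'^{(1)}\cdots\Theta'^{(n)}(\bff^\top)^{(n)}$, so that $\bx=\delta_0(\bff^\top-\widetilde{\Upsilon}_{r+1}^\top)$ coincides term-by-term with the paper's $\Log_{\widetilde{E}'}(\bv_{\calM,z})$; in other words, after the fix your construction is the same object as in the paper, not an alternative one.

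\textbf{Step 3 is asserted, not proved.}  Even granting Steps 1--2, the claim ``$\Exp_{\widetilde{E}'}\circ\delta_0=\delta_1$ modulo the telescoping'' is precisely the nontrivial content: to establish $\Exp_{\widetilde{E}'}(\bx)\in\overline K[z]^d$ one needs the explicit formula for the logarithm coefficients $P_n\bv^{(n)}=\delta_0(\Theta'^{(1)}\cdots\Theta'^{(n)}\iota^{-1}(\bv)^{(n)})$ (from \cite[Prop.~2.2]{ANDTR20a}) together with the bookkeeping between the $\sigma$-filtration on $\calM'$ and the $\tau$-expansion of $\Exp_{\widetilde E'}$; the paper carries this out in its Steps 1--2.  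You concede this yourself (``The main obstacle is Step 3\ldots''), so the proposal does not close the argument.  As a smaller point, the target of Step 3 is off: the value should be $\Exp_{\widetilde{E}'}(\bx)=\delta_{1,z}(\bff^\top)=\bv_{\calM,z}$, which is generally a nonconstant polynomial in $z$ (e.g.\ in the Anderson--Thakur case $\delta_0(\bff^\top)=0$ but $\bv_{\calM,z}$ has positive-degree $z$-terms), not the $z$-independent $\bv_\calM=\delta_1(\bff^\top)$.  It still lies in $\overline K[z]^d$, so the conclusion survives, but the assertion as written is incorrect.  Step 4 is fine in spirit, though the existence of the decomposition $\bv_\calM=\sum_i E'_{a_i}\bu_i$ with each $\bu_i$ in the convergence domain is exactly the ``mild convergence condition'' one must verify case by case (this is made precise in the paper's hypothesis on $\alpha(\calM)$).
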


We sketch now main ideas of the proof of Theorem \ref{theorem: Intro main theorem}. 
\begin{enumerate}
\item We explicitly compute the coefficient matrices of the logarithm series $\Log_{E'}$ of $E'$. Our method is based on a joint work of the second author with Anglès and Tavares Ribeiro \cite{ANDTR20a} and is different from the approach of Anderson and Thakur \cite{AT90}. It also differs from the logarithm computations of the first author, Chang and Mishiba in \cite{Gre17b,CGM19}.

\item Then we consider the canonical $z$-deformation $\widetilde E'$ of the Anderson $t$-module $E'$ and also the canonical $z$-deformation $\bv_{\calM,z}$ of $\bv_{\calM}$. Using Step (1) we compute the value $\Log_{\widetilde{E}'}(\bv_{\calM,z})$ as a formal series in $z$.

\item Using the hypothesis on $\Psi_{r+1}$, we show that the formal series $\Log_{\widetilde{E}'}(\bv_{\calM,z})$ belongs to the Tate algebra $\Lie_{\widetilde{E}'}(\bT_z(\C_\infty))$ in the variable $z$ and with coefficients in $\C_\infty$. Hence we obtain the desired result.
\end{enumerate}

\subsection{Applications of the main result} ${}$\par

We present several applications of our main result to Carlitz zeta values and characteristic $p$ multiple zeta values.  We briefly recall the definitions of these values. In \cite{Car35} Carlitz introduced the Carlitz zeta values $\zeta_A(n)$ $(n \in \N)$ given by 
	\[ \zeta_A(n) := \sum_{a \in A_+} \frac{1}{a^n} \in K_\infty \]
which are analogues of classical special zeta values in the function field setting. Here $A_+$ denotes the set of monic polynomials in $A$. For any tuple of positive integers $\mathfrak s=(s_1,\ldots,s_r) \in \N^r$, Thakur \cite{Tha04} defined the characteristic $p$ multiple zeta value (MZV for short) $\zeta_A(\fs)$ or $\zeta_A(s_1,\ldots,s_r)$ by
\begin{equation*}
\zeta_A(\fs):=\sum \frac{1}{a_1^{s_1} \ldots a_r^{s_r}} \in K_\infty
\end{equation*}
where the sum runs through the set of tuples $(a_1,\ldots,a_r) \in A_+^r$ with $\deg a_1>\ldots>\deg a_r$. We call $r$ the depth of $\zeta_A(\fs)$. We note that Carlitz zeta values are exactly depth one MZV's. 

In \cite{AT09}, for $\fs=(s_1,\ldots,s_r) \in \N^r$ as above, Anderson and Thakur used Anderson-Thakur polynomials to construct an effective dual $t$-motive which is rigid analytically trivial such that $\zeta_A(\fs)$ appears in the entries of the attached rigid analytic trivialization. It has been generalized to the so-called Anderson-Thakur (effective) dual $t$-motives $\calM'_{\fs,\fQ}$ indexed by more general tuples of polynomials $\fQ=(Q_1,\dots,Q_r) \in \overline K[t]^r$ and notably developed by Chang \cite{Cha14} and Chang, Papanikolas and Yu \cite{CPY19} in recent years (see also \cite{Cha16,CGM19,CM19,CM19b}).

Section \ref{S:Log for And Thak Models} is devoted to the applications of the main result (see Theorem \ref{theorem: Intro main theorem}) to the setting of the Anderson-Thakur dual $t$-motives. Inspired by \cite{CPY19} we define the $t$-module $E'_{\fs,\fQ}$ associated to the dual $t$-motive $\calM'_{\fs,\fQ}$ and the associated special point $\bv_{\fs,\fQ} \in E'_{\fs,\fQ}(\overline K)$. We then establish a split-logarithmic identity for $E'_{\fs,\fQ}$:

\begin{theorem}[Theorem \ref{theorem: AT model}] \label{theorem: AT model intro}
We have a split-logarithmic identity
\begin{align*}
\Log_{E'}^!(\bv_{\fs,\fQ})
=\delta_0 \begin{pmatrix}
(-1)^{r-1} \mathfrak L^*(s_r,\dots,s_1) \Omega^{-(s_1+\dots+s_r)} \\
(-1)^{r-2} \mathfrak L^*(s_r,\dots,s_2) \Omega^{-(s_2+\dots+s_r)} \\
\vdots \\
\mathfrak L^*(s_r) \Omega^{-s_r} 
\end{pmatrix},
\end{align*}
where the series $\mathfrak L^*$ are defined in \eqref{eq: series L star} following Chang \cite{Cha14}.
\end{theorem}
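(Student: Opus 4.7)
The plan is to specialize Theorem \ref{theorem: Intro main theorem} to the dual $t$-motive $\calM_{\fs,\fQ}$ sitting in $\textrm{Ext}^1_\cF(\mathbf 1, \calM'_{\fs,\fQ})$ and then identify the right-hand side $\delta_0(\bff^\top - \Upsilon_{r+1}^\top)$ with the explicit column vector of $\mathfrak L^*$-values appearing in the statement. So the first step is to recall the Anderson--Thakur construction of $\calM'_{\fs,\fQ}$ and write down its defining matrix $\Phi'$ in the basis coming from the tuple $(s_1,\dots,s_r)$. In that basis, the extension matrix $\Phi$ carries a last row $\bff = (f_1,\dots,f_r)$ whose entries are explicit monomials in $(t-\theta)$ times the polynomials $Q_i$, so $\bff^\top$ is easy to identify term by term.

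Second, I would solve for a rigid analytic trivialization $\Psi$ in the block form prescribed by the main theorem. Since $\Phi$ is block lower triangular with $1$ in the bottom right, $\Psi$ inherits the same block structure, and the last row $\Psi_{r+1}$ satisfies a first-order $\sigma$-difference equation driven by $\bff$ and $\Phi'$. Iterating this equation produces the nested sum defining $\mathfrak L^*(s_r,\dots,s_j)$ for each $j$, scaled by the appropriate power of $\Omega$. In other words, the entries of $\Psi_{r+1}$ are precisely $\Omega^{s_1+\dots+s_r}$ times $\mathfrak L^*$-type series. Inverting the block matrix then gives $\Upsilon_{r+1} = -\Psi_{r+1}\Psi'^{-1}$, and one checks this combinatorial identity to match the column in the statement up to sign.

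Third, I would apply $\delta_0$ entry-by-entry. Since each coordinate of $\bff^\top - \Upsilon_{r+1}^\top$ lies in the relevant Tate algebra, the map $\delta_0$ reduces to reading off the appropriate ``low-degree'' coefficients in $(t-\theta)$, which for the Anderson--Thakur model is exactly the evaluation that produces the listed $\delta_0$-values. To invoke Theorem \ref{theorem: Intro main theorem} one must check the hypothesis that $F\Psi_{r+1}^{(k)} \to 0$ in the Tate topology; this follows from the standard growth estimates for Anderson--Thakur polynomials and for $\Omega$ under Frobenius twisting, because each twist multiplies by a factor going to zero on the closed unit disc.

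Finally, to upgrade from a Stark logarithmic identity to a genuine split-logarithmic identity, I would verify the mild convergence condition on $\bv_{\fs,\fQ}$: either $\bv_{\fs,\fQ}$ itself lies in the domain of convergence of $\Log_{E'}$, or one can decompose it as a finite sum $\sum d[a_i]\bu_i$ with the $\bu_i$ inside that domain, in the spirit of Thakur's splitting trick for tensor powers of the Carlitz module. The main obstacle I expect is the bookkeeping in the second step: identifying the entries of $\Psi_{r+1}$ with $\Omega^{s_1+\dots+s_r}\cdot \mathfrak L^*(s_r,\dots,s_1)$ requires tracking the combinatorics of iterated $\sigma$-sums carefully and matching signs with the alternating $(-1)^{r-j}$ in the statement, while also keeping the block inversion $\Upsilon = \Psi^{-1}$ consistent. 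Once this identification is in place, everything else is a direct application of the main theorem.
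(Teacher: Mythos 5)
Your overall strategy is the right one: specialize the main theorem to $\calM_{\fs,\fQ}$ and compute the right-hand side explicitly. But there are two real gaps and one bookkeeping error that, as written, would prevent the proof from going through.

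First, the bookkeeping: you have the roles of $\mathfrak L$ and $\mathfrak L^*$ reversed. Iterating the difference equation $\Psi_{r+1}^{(-1)} = \bff\Psi' + \Psi_{r+1}$ produces the \emph{strict-inequality} sums, so $\Psi_{r+1} = (\mathfrak L(s_1,\dots,s_r),\dots,\mathfrak L(s_r))$ with no extra $\Omega$ factor. The $\mathfrak L^*$-series, together with the $\Omega^{-(s_j+\dots+s_r)}$ normalizations and alternating signs, appear only after inverting, in $\Upsilon_{r+1}$. If you start from $\mathfrak L^*$ in $\Psi_{r+1}$ the sign and $\Omega$ tracking in step 2 will not close up.

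Second, and more seriously, you have not verified the hypothesis of Part (b) of the main theorem, which requires exhibiting $\alpha(\calM) = Q_r^{(-1)}(t-\theta)^{s_r} m_r$ in the specific form $\sum_i t^{n_i}\sigma^{\ell_i}\bigl(\sum_j u_{i,j}w_j\bigr)$ with each $\bu_i$ in the domain of convergence of $\Log_{E'}$. Saying "in the spirit of Thakur's splitting trick" does not supply this. The concrete decomposition the paper uses is $Q_r^{(-1)}(t-\theta)^{s_r}m_r = \sum_{\ell=1}^r (-1)^{r-\ell}\, Q_r^{(-1)}\cdots Q_\ell^{(-1)}\,\sigma m_\ell$, obtained by repeatedly substituting $\sigma m_\ell = Q_{\ell-1}^{(-1)}(t-\theta)^{s_{\ell-1}+\dots+s_r}m_{\ell-1} + (t-\theta)^{s_\ell+\dots+s_r}m_\ell$. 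One then expands each product $Q_r\cdots Q_\ell$ into monomials $b_{r,i_r}\cdots b_{\ell,i_\ell}t^{i_r+\dots+i_\ell}$ and uses the norm condition \eqref{eq: condition for Q} (so $|b_{r,i_r}\cdots b_{\ell,i_\ell}|_\infty < |\theta|_\infty^{(s_\ell+\dots+s_r)q/(q-1)}$) together with the degree estimates in [CGM19, Lemma 4.2.1] to conclude that every resulting $\bu_i$ lies inside the radius of convergence. Without this explicit decomposition you only get the Stark logarithmic identity, not the split-logarithmic one. Finally, to match the stated right-hand side you must also note that $\delta_0(\bff^\top) = 0$ --- this follows because $Q_r^{(-1)}(t-\theta)^{s_r}m_r$ is divisible by $(t-\theta)^{d_r}$ in the $m_r$-coordinate, hence lies in $\sigma\calM'$ --- so that $\delta_0(\bff^\top - \Upsilon_{r+1}^\top) = \delta_0(-\Upsilon_{r+1}^\top)$ is exactly the displayed column.
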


When we specialize $\fQ$ to Anderson-Thakur polynomials (see \S \ref{sec: CPY}), the dual $t$-motives are intimately related to MZV's and are well studied in the aforementioned works. In this setting Chang gave very simple and elegant logarithmic interpretations for some special MZV's (see \cite[Theorem 1.4.1]{Cha16}). However, as Chang and Mishiba \cite{CM20} explained to us, the relations among Chang's theorem and the works of Chang-Papanikolas-Yu \cite{CPY19} and other works \cite{CM19,CM19b} are still mysterious.  The aim of Theorem \ref{theorem: generalization Chang} is twofold. It presents a generalization of Chang's theorem to the general setting, i.e. for any tuple $\fQ$ and also clarifies the connections with the previous works \cite{CM19,CM19b,CPY19}. 

\begin{theorem}[Theorem \ref{theorem: generalization Chang}]
Let $\fs=(s_1,\ldots,s_r) \in \N^r$ with $r \geq 2$. Assume that, for $1 \leq \ell<j\leq r+1$, the values $\mathfrak L(s_\ell,\dots,s_{j-1})(\theta)$ do not vanish (see \eqref{eq: series L general} for a precise definition). We further suppose that $\mathfrak L(s_2,\ldots,s_r)(\theta) \in K$. Then there exist $a_\fs \in A$, an integral point $\mathbf{Z}_\fs \in C^{\otimes (s_1+\ldots+s_r)}(A)$ and a point $\bz_\fs \in \C_\infty^{s_1+\ldots+s_r}$ such that 

1) the last coordinate of $\bz_\fs$ equals $a_\fs \mathfrak L(s_1,\ldots,s_r)(\theta)$,

2) $\Exp_{C^{\otimes (s_1+\ldots+s_r)}}(\bz_\fs)=\mathbf{Z}_\fs$.
\end{theorem}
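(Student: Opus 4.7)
The approach is to deduce the theorem from Theorem \ref{theorem: AT model intro} applied to the Anderson--Thakur dual $t$-motive $\calM'_{\fs,\fQ}$, followed by passage to $C^{\otimes w}$ via a natural quotient, where $w = s_1 + \cdots + s_r$.

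First I would specialize $\fQ$ to the tuple of Anderson--Thakur polynomials $(H_{s_1-1},\ldots,H_{s_r-1})$, so that each entry $\mathfrak L^*(s_r,\ldots,s_j)(\theta)\,\Omega^{-(s_j+\cdots+s_r)}$ in the Lie-vector appearing on the right-hand side of Theorem \ref{theorem: AT model intro} becomes an explicit scalar multiple of the MZV-type value $\mathfrak L(s_j,\ldots,s_r)(\theta)$. This yields a split-logarithmic identity on $E'_{\fs,\fQ}(\C_\infty)$ whose components are precisely the sub-tuple quantities appearing in the hypothesis of the theorem to be proved.

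Next I would exploit the block-triangular structure of $\Phi'_{\fs,\fQ}$ to produce a morphism of Anderson $t$-modules $\pi \colon E'_{\fs,\fQ} \twoheadrightarrow C^{\otimes w}$, arising from the canonical quotient of dual $t$-motives singling out the corner block $(t-\theta)^{s_1+\cdots+s_r}$. Since $\pi$ commutes with $\Exp$ and $\Log$, and since the top component of the right-hand side vector is a known nonzero scalar times $\mathfrak L(s_1,\ldots,s_r)(\theta)$, pushing the split-log identity through $\pi$ and reading off the last Lie coordinate on $C^{\otimes w}$ produces a point $\bz_\fs \in \C_\infty^w$ whose last coordinate is a $K$-multiple of $\mathfrak L(s_1,\ldots,s_r)(\theta)$ and whose exponential image lies in $C^{\otimes w}(\overline K)$.

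Finally I would use the remaining hypotheses to upgrade this $\overline K$-valued identity to an $A$-integral one. The non-vanishing assumptions on the sub-values $\mathfrak L(s_\ell,\ldots,s_{j-1})(\theta)$ ensure that the triangular system of entries obtained from $\delta_0(\bff^\top - \Upsilon_{r+1}^\top)$ can be solved for the top component without information loss, so no lower-depth contribution is forced to vanish spuriously. The rationality hypothesis $\mathfrak L(s_2,\ldots,s_r)(\theta) \in K$ provides a common denominator controlling the residual lower-depth terms; multiplying the whole identity by this denominator yields an element $a_\fs \in A$ for which the corresponding point $\mathbf{Z}_\fs$ lies in $C^{\otimes w}(A)$.

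The main obstacle I expect is the second step: making the projection $\pi$ explicit at the level of Lie-algebra coordinates (not just on the dual $t$-motive side) and verifying that, under this identification, the top entry of the $r$-component vector from Theorem \ref{theorem: AT model intro} matches the \emph{last} Lie coordinate of $C^{\otimes w}$ up to a tractable power of $\widetilde\pi$. Once this bookkeeping is settled, the reduction to $A$-integrality via the $K$-rationality hypothesis and clearing of denominators is straightforward.
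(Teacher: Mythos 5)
Your proposal shares the right global scaffolding with the paper's argument — specialize $\fQ$ to Anderson–Thakur polynomials, use Theorem \ref{theorem: AT model intro}, and project to the top $C^{\otimes(s_1+\cdots+s_r)}$ block — but it has a genuine gap at the crucial step relating the star series to the non-star series. The Lie-vector in Theorem \ref{theorem: AT model intro} has $d_1$th coordinate $(-1)^{r-1}\,\mathfrak L^*(s_r,\ldots,s_1)(\theta)\,\widetilde\pi^{\,s_1+\cdots+s_r}$, and $\mathfrak L^*(s_r,\ldots,s_1)(\theta)$ is \emph{not} an explicit scalar multiple of $\mathfrak L(s_1,\ldots,s_r)(\theta)$ when $r\geq 2$. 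By Lemma \ref{L:Lstarinv} one has, e.g., $\mathfrak L^*(s_2,s_1)=\mathfrak L(s_1)\mathfrak L(s_2)-\mathfrak L(s_1,s_2)$, so the relation is an inclusion–exclusion identity involving \emph{products} of lower-depth values, not a rescaling. Pushing the split-logarithmic identity through your projection $\pi$ ``as is'' therefore yields the star value $\mathfrak L^*$, not the required $\mathfrak L(s_1,\ldots,s_r)(\theta)$, and the last paragraph of your sketch (clearing denominators) does not bridge this gap.

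What is actually needed, and what the paper does, is an intermediate subtraction step before projecting. First, the non-vanishing hypothesis makes \cite[Theorem 2.5.2]{CPY19} applicable, which together with $\mathfrak L(s_2,\ldots,s_r)(\theta)\in K$ yields that \emph{all} the values $a_\ell:=\mathfrak L(s_\ell,\ldots,s_r)(\theta)$, $2\leq \ell\leq r$, lie in $K$. One then forms
\[
\bz_\fs' \;=\; d[a_\fs]\,\Log_{E'}^!(\bv_{\fs,\fQ}) \;-\; d[a_\fs a_2]\,\lambda_2 \;-\;\cdots\;-\;d[a_\fs a_r]\,\lambda_r,
\]
where $\lambda_2,\ldots,\lambda_r$ are the period lattice generators coming from $\delta_0$ applied to the columns of $\Upsilon'^\top$ and $a_\fs\in A$ clears the denominators of the $a_\ell$. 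It is precisely this subtraction — with coefficients dictated by the CPY rationality result, not by a denominator-clearing argument — that implements the inclusion–exclusion of Lemma \ref{L:Lstarinv} and forces the $d_1$th coordinate of $\bz_\fs'$ to equal $a_\fs\,\mathfrak L(s_1,\ldots,s_r)(\theta)$ while killing the coordinates below $d_1$. The integrality of $\mathbf{Z}_\fs$ then follows because $\Exp_{E'}(\bz_\fs')=E'_{a_\fs}\bv_{\fs,\fQ}$ (the $\lambda_j$ exponentiate to $0$) and $\bv_{\fs,\fQ}\in E'(A)$ with $E'$ defined over $A$ by Theorem \ref{theorem: CPY integral}. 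Your proposal omits the period-lattice subtraction entirely and mischaracterizes the role of both hypotheses; as written, it would only produce a multiple of the star value $\mathfrak L^*(s_r,\ldots,s_1)(\theta)$ in the last coordinate. (You also write $(H_{s_1-1},\ldots,H_{s_r-1})$ where the paper's convention is $(H_{s_1},\ldots,H_{s_r})$, but that is a cosmetic indexing slip.)
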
 

Next we apply Theorem \ref{theorem: AT model intro} to the dual $t$-motives studied by Chang, Papanikolas and Yu in \cite{CPY19} in \S \ref{sec: CPY} and to those connected to multiple polylogarithms studied  by Chang, the first author and Mishiba in \cite{CGM19,CM19,CM19b} in \S \ref{sec: CM model}. We recover some earlier results (see Proposition \ref{proposition: torsion} and Theorem \ref{theorem: CM model}) and discover new results, which we state briefly below (see \S \ref{sec: CPY} for precise definitions of $\Gamma_i$ and $\zeta^*_A$).
\begin{theorem}[Theorem \ref{theorem: CPY model}]
For $\fs=(s_1,\ldots,s_r) \in \N^r$, we put $d_\ell:=s_\ell+\dots+s_r$ for $1 \leq \ell \leq r$. Let the polynomials $\fQ$ of Theorem \ref{theorem: AT model intro} be specialized to be Anderson-Thakur polynomials (see \S \ref{sec: CPY}).  Then the $(d_1+\dots+d_\ell)$th coordinate of $\Log_{E'}^!(\bv_{\fs,\fQ})$ of Theorem \ref{theorem: AT model intro} equals $(-1)^{r-\ell} \Gamma_{s_\ell} \dots \Gamma_{s_r} \zeta^*_A(s_r\dots,s_\ell)$.
\end{theorem}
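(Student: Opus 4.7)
The plan is to specialize Theorem \ref{theorem: AT model intro} to the Anderson--Thakur setting and to extract the $(d_1+\dots+d_\ell)$-th coordinate of $\Log_{E'}^!(\bv_{\fs,\fQ})$ by using the block decomposition of $E'_{\fs,\fQ}$ together with the Anderson--Thakur identity relating multi-polylogarithm values at $\theta$ to multiple zeta values. The right hand side of Theorem \ref{theorem: AT model intro} is $\delta_0$ of an $r$-vector whose $\ell$-th entry equals $(-1)^{r-\ell}\mathfrak{L}^*(s_r,\dots,s_\ell)\,\Omega^{-d_\ell}$, so it suffices to compute how $\delta_0$ acts on each such entry and to identify which coordinate of the image is the desired one.

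First I would unpack the construction of $\calM'_{\fs,\fQ}$ and $E'_{\fs,\fQ}$ recalled in \S \ref{sec: CPY}. Under the chosen $\overline{K}[t]$-basis of $\calM'_{\fs,\fQ}$, the twisting matrix $\Phi'$ is block lower triangular and its $\ell$-th diagonal block is precisely the one defining the Carlitz tensor power $C^{\otimes d_\ell}$. Consequently $E'_{\fs,\fQ}$ is a successive extension of $C^{\otimes d_1},\dots, C^{\otimes d_r}$, the Lie algebra splits as $\Lie_{E'_{\fs,\fQ}}(\C_\infty)=\bigoplus_{\ell=1}^r \C_\infty^{d_\ell}$, and the $(d_1+\dots+d_\ell)$-th coordinate of any element of $\Lie_{E'_{\fs,\fQ}}(\C_\infty)$ is precisely the last coordinate of its $\ell$-th summand. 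Moreover, on this $\ell$-th block the map $\delta_0$ coincides with the standard $\delta_0$-map for $C^{\otimes d_\ell}$, which sends $h(t)\in\TT$ to the vector of Taylor coefficients of $h(t)$ modulo $(t-\theta)^{d_\ell}$, the last coordinate of this vector being the evaluation $h(\theta)$.

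Applying these structural facts to the $\ell$-th entry of the vector in Theorem \ref{theorem: AT model intro} then yields
\[ \big((d_1+\dots+d_\ell)\text{-th coordinate of } \Log_{E'}^!(\bv_{\fs,\fQ})\big) = (-1)^{r-\ell}\,\mathfrak{L}^*(s_r,\dots,s_\ell)(\theta)\,\Omega(\theta)^{-d_\ell}. \]
At this point I would invoke the Anderson--Thakur identity in the form developed by Chang \cite{Cha14} (compare \S \ref{sec: CPY}): when $\fQ$ is the tuple of Anderson--Thakur polynomials,
\[ \mathfrak{L}^*(s_r,\dots,s_\ell)(\theta)\,\Omega(\theta)^{-d_\ell} = \Gamma_{s_\ell}\cdots\Gamma_{s_r}\,\zeta^*_A(s_r,\dots,s_\ell). \]
Substituting this into the previous display gives exactly the claim.

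The main obstacle is establishing the block-wise compatibility in the first step: namely, that the $\ell$-th entry of the argument of $\delta_0$ in Theorem \ref{theorem: AT model intro} feeds into the $\ell$-th Carlitz tensor summand of $\Lie_{E'_{\fs,\fQ}}(\C_\infty)$, and that under this identification the last coordinate is the evaluation at $\theta$. This is a bookkeeping exercise tracing through the Anderson--Thakur construction of $\calM'_{\fs,\fQ}$ in \S \ref{sec: CPY}; once it is pinned down, the theorem follows by a direct substitution using Theorem \ref{theorem: AT model intro} and the classical Anderson--Thakur formula.
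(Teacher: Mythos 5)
Your proposal is correct and follows essentially the same route as the paper: the paper also proves this by specializing Theorem \ref{theorem: AT model} to $\fQ=(H_{s_1},\dots,H_{s_r})$, reading off the $(d_1+\dots+d_\ell)$th coordinate via the explicit description of $\delta_0$ in \eqref{D:delta0ATmodel} (which shows that coordinate is exactly $c_{0,\ell}$, the $t=\theta$ evaluation of the $\ell$th entry), and then invoking the Anderson--Thakur/Chang identity $\left[\mathfrak{L}^*(s_\ell,\dots,s_{j-1})\Omega^{-(s_\ell+\dots+s_{j-1})}\right](\theta)=\Gamma_{s_\ell}\cdots\Gamma_{s_{j-1}}\zeta_A^*(s_\ell,\dots,s_{j-1})$. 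Your block-decomposition discussion is a slightly more verbose but accurate unpacking of the bookkeeping that the paper handles implicitly through \eqref{D:Ksigmabasis} and \eqref{D:delta0ATmodel}.
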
 

Section \ref{S:Log Int for MZVs} is devoted to proving new logarithmic interpretations for MZV's and for $\nu$-adic MZV's in the same spirit of the original work of Anderson and Thakur \cite{AT90} for Carlitz zeta values. We note that the entries of $\Upsilon(\theta)=\Psi\inv(\theta)$ attached to the above Anderson-Thakur dual $t$-motives are not MZV's except in the depth one case as in \cite{AT90}. This may explain some of the difficulties encountered when one wishes to extend the work of Anderson and Thakur via this setting (see \cite[Introduction]{CM19b} for a detailed discussion). To bypass this issue, for $\fs=(s_1,\ldots,s_r) \in \N^r$, we devise a  new dual $t$-motive $\calM'^*$ called the star dual $t$-motive whose entries of the associated matrix $\Upsilon^*(\theta)$ naturally contain MZV's. We explicitly construct an Anderson $t$-module $E'^*$ defined over $A$ and an integral point $\bv^*_\fs \in E'^*(A)$. Finally, we apply our main result to obtain the desired logarithmic interpretation for MZV's (see \S \ref{sec:MZV} for related definitions).

\begin{theorem}[Theorem \ref{theorem: star model}]
For $\fs=(s_1,\ldots,s_r) \in \N^r$, we put $d_\ell:=s_\ell+\dots+s_r$ for $1 \leq \ell \leq r$. Then we have
\begin{align*}
\Log_{E'^*}^!(\bv^*_\fs)
=\delta_0 \begin{pmatrix}
- \mathfrak L(s_r,\dots,s_1) \Omega^{-(s_1+\dots+s_r)} \\
- \mathfrak L(s_r,\dots,s_2) \Omega^{-(s_2+\dots+s_r)} \\
\vdots \\
- \mathfrak L(s_r) \Omega^{-s_r} 
\end{pmatrix}
\end{align*}

In particular, for $1 \leq \ell \leq r$, the $(d_1+\dots+d_\ell)$th coordinate of the $\Log_{E'^*}^!(\bv^*_\fs)$ equals $- \Gamma_{s_\ell} \dots \Gamma_{s_r} \zeta_A(s_r\dots,s_\ell)$.
\end{theorem}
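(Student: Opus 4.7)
The strategy is to specialize the general framework of Theorem \ref{theorem: Intro main theorem} to a carefully engineered effective dual $t$-motive $\calM'^*$, the ``star dual $t$-motive,'' whose inverse rigid analytic trivialization $\Upsilon^*=(\Psi^*)\inv$ records MZV's directly in its last row. The first task is therefore to write down an explicit block matrix $\Phi^* \in \Mat_{r+1}(\overline K[t])$, of the same general shape as the matrix attached to the Anderson-Thakur dual $t$-motives $\calM'_{\fs,\fQ}$ in \S \ref{sec: CPY}, but with the off-diagonal entries arranged so that the rigid analytic trivialization naturally carries the iterated series $\mathfrak L(s_r,\ldots,s_\ell)$ rather than the starred variant $\mathfrak L^*$ that arises in Theorem \ref{theorem: AT model intro}. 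One expects the diagonal entries to be the powers $(t-\theta)^{s_\ell}$ with a trailing $1$, and the sub-diagonal entries of the last column to involve suitable Frobenius twists of the Anderson-Thakur polynomials $H_{s_\ell}$ multiplied by $(t-\theta)^{s_\ell}$, with sign conventions chosen so that $\bff \in A[t]^r$ and the associated point $\bv^*_\fs$ is integral in $E'^*(A)$.

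Given such a $\Phi^*$, I would solve the functional equation $\Psi^{*,(-1)}=\Phi^* \Psi^*$ in $\text{GL}_{r+1}(\bL)$ by iterating the twisting relation. Using Chang's expansion for the $\mathfrak L$-series (see \eqref{eq: series L general}) and the standard identity $\Omega^{(-1)}=(t-\theta)\Omega$, one checks inductively that the entries of $\Psi^*$ are $\bF_q$-linear combinations of products of $\Omega^{d_\ell}$ with the series $\mathfrak L(s_r,\ldots,s_\ell)$. Inverting this block lower-triangular matrix, the last row $\Upsilon^*_{r+1}$ of $\Upsilon^*$ then has entries of the form $-\mathfrak L(s_r,\ldots,s_\ell)\Omega^{-d_\ell}$, matching exactly the vector displayed in the statement.

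With $\calM'^*$ in hand, Theorem \ref{theorem: Intro main theorem} yields the desired Stark logarithmic identity, and I would upgrade it to a split-logarithmic identity by checking the mild convergence conditions on $\bv^*_\fs$ and by exhibiting a polynomial $F \in \Fq[t]$ such that $F \Psi^{*,(k)}_{r+1}\to 0$ in $\TT$ as $k\to \infty$; this is a standard estimate, using that iterated Frobenius twisting of $\Omega$ forces vanishing at $t=\theta,\theta^q,\theta^{q^2},\ldots$. To extract the MZV statement, I would then compute the $(d_1+\cdots+d_\ell)$th coordinate of $\delta_0(\bff^\top-\Upsilon_{r+1}^{*,\top})$: the map $\delta_0$ sends the $\ell$th block of $\Upsilon^*_{r+1}$ to a vector in $\Mat_{d\times 1}(\overline K)$ whose $(d_1+\cdots+d_\ell)$th coordinate equals $-\mathfrak L(s_r,\ldots,s_\ell)(\theta)$, and invoking Chang's identity $\mathfrak L(s_r,\ldots,s_\ell)(\theta) = \Gamma_{s_\ell}\cdots\Gamma_{s_r}\,\zeta_A(s_r,\ldots,s_\ell)$ produces the claimed formula.

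The main obstacle is the construction of $\calM'^*$ itself: the matrix $\Phi^*$ must be chosen so that simultaneously (i) the associated $t$-module $E'^*$ is defined over $A$, (ii) the point $\bv^*_\fs$ built via \eqref{D:vMdef} is integral in $E'^*(A)$, and most delicately (iii) the last row of $\Upsilon^*$ is exactly the vector of un-starred series $-\mathfrak L(s_r,\ldots,s_\ell)\Omega^{-d_\ell}$ rather than the starred variants produced by the Anderson-Thakur choice. This last point explains why the star model is genuinely new and why previous attempts based directly on the Anderson-Thakur $t$-motives failed to give logarithmic interpretations of MZV's beyond depth one. Once the correct $\Phi^*$ is guessed and its rigid analytic trivialization verified, the remainder of the argument is a direct application of Theorem \ref{theorem: Intro main theorem} combined with Chang's evaluation formula, so the creative effort is essentially concentrated in the design of the matrix $\Phi^*$.
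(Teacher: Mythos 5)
Your high-level strategy is correct and matches the paper's: build a new dual $t$-motive $\calM'^*$ so that $\Upsilon^*=(\Psi^*)^{-1}$ carries the un-starred series $\mathfrak L(s_r,\ldots,s_\ell)\Omega^{-d_\ell}$ in its last row, then feed this into Theorem \ref{theorem: Intro main theorem} and evaluate at $t=\theta$. However, the proposal leaves a genuine gap precisely at the point you flag as ``delicate,'' and your guess there would actually fail.

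You propose that $\Phi^*$ should have ``the same general shape as the matrix attached to the Anderson-Thakur dual $t$-motives,'' i.e.\ lower bidiagonal with diagonal powers of $(t-\theta)$ and twisted Anderson-Thakur polynomials on the subdiagonal. But that is exactly the shape that produces starred series in $\Upsilon$ (Theorem \ref{theorem: AT model}): a bidiagonal $\Phi$ gives $\mathfrak L$ in $\Psi$ and $\mathfrak L^*$ in $\Upsilon$. To reverse the roles, the paper takes $\Phi^*$ to be a \emph{full} lower-triangular matrix with $(j,\ell)$-entry $(-1)^{j-\ell}\prod_{\ell\leq k<j}Q_k^{(-1)}(t-\theta)^{s_\ell+\cdots+s_r}$ (so the diagonal entries are $(t-\theta)^{d_\ell}$, not $(t-\theta)^{s_\ell}$), and the key algebraic miracle is that the \emph{inverse} of this matrix is lower bidiagonal. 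That bidiagonality of $(\Phi'^*)^{-1}$, and the fact that it agrees with $(\Phi')^{-1}$ from the Anderson-Thakur model along the diagonal and up to sign on the subdiagonal, is what lets the paper transfer the convergence estimates of \cite[Proposition 4.1.3 and Lemma 4.2.1]{CGM19} to $\Log_{E'^*}$. This structural observation is not ``a standard estimate'' as your proposal asserts; it is the technical heart of the convergence argument, and there is no visible route to it from a bidiagonal $\Phi^*$.

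Two further pieces are needed and absent from your sketch. First, with the correct $\Phi^*$ the vector $\bff^*$ is supported on \emph{all} $r$ coordinates, not just the last one, so you cannot simply read off $\bv^*_\fs$ as in the Anderson-Thakur case; the paper shows $\alpha(\calM^*)=-\sigma(Q_r m_r)$, and this explicit rewriting is what verifies the hypothesis of Theorem \ref{theorem: main theorem}(b) (i.e., supplies the finite collection $\{(\ell_i,n_i,\bu_i)\}$ with each $\bu_i$ inside the convergence domain). Second, the integrality of $E'^*$ and $\bv^*_\fs$ requires an argument along the lines of Proposition \ref{prop: integral}, adapted from \cite[Theorem 5.3.4]{CPY19}; ``with sign conventions chosen'' does not substitute for it. In short: you correctly identify where the creative content lies, but your proposed $\Phi^*$ is wrong, and even with the right $\Phi^*$ you would still need the bidiagonal-inverse observation and the $\alpha(\calM^*)=-\sigma(Q_r m_r)$ computation, neither of which appears in the proposal.
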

We note that we deduce easily from the above theorem logarithmic interpretations for $\nu$-adic MZV's (see \S \ref{sec: v-adic} for related definitions).

\begin{theorem}[Theorem \ref{theorem: nu adic}]
For $\fs=(s_1,\ldots,s_r) \in \N^r$, we put $d_\ell:=s_\ell+\dots+s_r$ for $1 \leq \ell \leq r$. Let $\nu$ be a finite place of $K$. Then there exists a nonzero $a \in A$ for which the series $\Log_{E'^*}(E'^*_a \bv^*_\fs)$ converges $\nu$-adically in $\Lie_{E'^*}(\C_\nu)$ and the $d_1$th coordinate of $\Log_{E'^*}(E'^*_a \bv^*_\fs)$ equals $-a \Gamma_{s_1} \ldots \Gamma_{s_r} \zeta_A(s_r,\dots,s_1)$.
\end{theorem}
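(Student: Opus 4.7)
The plan is to deduce Theorem \ref{theorem: nu adic} as a direct $\nu$-adic corollary of Theorem \ref{theorem: star model}, exploiting the integrality $\bv^*_\fs \in E'^*(A)$ and the fact that $E'^*$ is defined over $A$.

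I would first unpack Theorem \ref{theorem: star model}. By definition of $\Log^!_{E'^*}$, its conclusion amounts to a finite algebraic decomposition
\[\bv^*_\fs = \sum_i E'^*_{a_i}\bu_i \quad \text{in } E'^*(\overline K), \qquad a_i\in A,\ \bu_i\in\overline K^d,\]
together with the archimedean value $\sum_i d[a_i]\Log_{E'^*}(\bu_i)$ whose $d_1$th coordinate equals $-\Gamma_{s_1}\cdots\Gamma_{s_r}\zeta_A(s_r,\ldots,s_1)$. Since the $\bu_i$ are algebraic (from the explicit construction of the star motive in \S \ref{sec:MZV}), the decomposition transports via a fixed embedding $\overline K\hookrightarrow\C_\nu$ to the corresponding identity in $E'^*(\C_\nu)$. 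To produce $\nu$-adic convergence, I would invoke the standard formal group contraction: because $E'^*$ is defined over $A$, the kernel of reduction modulo $\nu$ is a formal $A$-module in $E'^*(\C_\nu)$ on which $\Log_{E'^*}$ converges, and for any point of $E'^*(\C_\nu)$ there is a nonzero $a\in A$ (for instance a sufficiently high power of the monic generator of $\nu$) such that $E'^*_a$ pushes the point inside this formal neighborhood. Enlarging $a$ if necessary so that each $E'^*_{a a_i}\bu_i$ also lies in this region, we obtain an $a$ with $E'^*_a\bv^*_\fs = \sum_i E'^*_{a a_i}\bu_i$ and every summand in the $\nu$-adic convergence region.

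The $\mathbb F_q$-linearity of $\Log_{E'^*}$ on its convergence region together with the functional equation $\Log_{E'^*}\circ E'^*_b = d[b]\circ \Log_{E'^*}$ then yields
\[\Log_{E'^*}(E'^*_a\bv^*_\fs)=\sum_i \Log_{E'^*}(E'^*_{a a_i}\bu_i)=d[a]\sum_i d[a_i]\Log_{E'^*}(\bu_i),\]
where the inner quantities $\Log_{E'^*}(\bu_i)$ are interpreted via the same contraction trick (namely as $d[a a_i]^{-1}\Log_{E'^*}(E'^*_{a a_i}\bu_i)$). Reading off the $d_1$th coordinate and using that $d[a]$ acts on it by multiplication by $a$, a consequence of the block-triangular structure of $E'^*$ at this distinguished coordinate, the claim reduces to identifying the $\nu$-adic value of $\sum_i d[a_i]\Log_{E'^*}(\bu_i)$ with $-\Gamma_{s_1}\cdots\Gamma_{s_r}\zeta_A(s_r,\ldots,s_1)$ interpreted as a $\nu$-adic MZV in the sense of \S \ref{sec: v-adic}.

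The only genuinely non-trivial step is this final identification. While the algebraic relation between $\bv^*_\fs$ and the $\bu_i$ transports between the completions of $K$ verbatim, the evaluations of the logarithmic series do not, and one must verify that the $\nu$-adic formal sum $\sum_i d[a_i]\Log_{E'^*}(\bu_i)$ is precisely the quantity defining $\zeta_A(s_r,\ldots,s_1)$ $\nu$-adically in \S \ref{sec: v-adic}. In the Chang--Mishiba framework where the $\nu$-adic MZV is defined through such a logarithmic interpretation, this step is essentially tautological; otherwise one performs a direct compatibility check that rests on the $A$-rationality of the coefficients of $\Log_{E'^*}$ and the algebraicity of the $\bu_i$. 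This is the main, and essentially the only, obstacle.
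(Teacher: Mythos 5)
Your proposal takes a genuinely different and more circuitous route than the paper, and has some real issues.

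The paper's proof is a one-step application of three ingredients, with no reference to the split-logarithmic decomposition at all: by Proposition \ref{prop: integral} the special point satisfies $\bv^*_\fs \in E'^*(A)$; by a cited result of Chang--Mishiba, the explicit $a_\nu = (\nu^{d_1}-1)\cdots(\nu^{d_r}-1)$ then gives $\lvert E'^*_{a_\nu}\bv^*_\fs\rvert_\nu<1$; and Proposition \ref{proposition: nu adic convergence} (an explicit estimate $\lvert P^*_i\rvert_\nu \leq \lvert\nu\rvert_\nu^{-i(3d_1-1)}$) gives $\nu$-adic convergence of $\Log_{E'^*}$ on the entire disc $\lvert\cdot\rvert_\nu<1$ --- a much larger region than the formal group neighborhood you invoke. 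The statement about the $d_1$th coordinate is then read as defining $\zeta_A(\fs)_\nu$, as the Remark following the theorem makes explicit. By contrast, you start from the split-logarithmic identity $\bv^*_\fs = \sum_i E'^*_{a_i}\bu_i$ of Theorem \ref{theorem: star model} and try to carry it over to $\C_\nu$ term by term.

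This route has two gaps. First, you assert that ``for any point of $E'^*(\C_\nu)$ there is a nonzero $a\in A$ such that $E'^*_a$ pushes the point inside this formal neighborhood'' and that you can ``enlarge $a$ if necessary so that each $E'^*_{aa_i}\bu_i$ also lies in this region.'' Neither is true for arbitrary points of $E'^*(\C_\nu)$: one needs $\nu$-integrality, which is exactly the content of Proposition \ref{prop: integral} for $\bv^*_\fs$ but which you never verify for the auxiliary $\bu_i$ (they are a priori only in $\overline K^d$; that they turn out to be $\nu$-integral here comes from the explicit form $\alpha(\calM)=-\sigma(H_{s_r}m_r)$, but this needs to be checked, not assumed). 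Second, the ``main obstacle'' you identify at the end --- matching the resulting $\nu$-adic value with a $\nu$-adic MZV --- is not an obstacle at all within the paper's framework: $\zeta_A(s_r,\dots,s_1)_\nu$ is \emph{defined} (following Chang--Mishiba) to be $-\tfrac{1}{a}$ times the $d_1$th coordinate of $\Log_{E'^*}(E'^*_a\bv^*_\fs)$ in $\C_\nu$, so there is no independent $\nu$-adic series to compare against. The cleaner path, which the paper takes, is to prove $\nu$-adic convergence of $\Log_{E'^*}$ on $\lvert\cdot\rvert_\nu<1$ directly via an explicit coefficient bound and then cite integrality; the split-logarithmic machinery is needed $\infty$-adically but buys you nothing here.
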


In particular, we can define $\zeta_A(\fs)_\nu$ to be the value $-\frac{1}{a}$ multiplied by the $d_1$th coordinate of $\Log_{E'^*}(E'^*_a \bv^*_\fs)_\nu$. As a consequence we simplify some arguments of the proof of Chang and Mishiba \cite[\S 6.4]{CM19b} of a conjecture of Furusho over function fields stated as follows: if we denote by $\overline{\mathcal Z}_n$ (resp. $\overline{\mathcal Z}_{n,\nu}$) the $\overline K$-vector space generated by all $\infty$-adic (resp. $\nu$-adic) MZV's of weight $n$, the we have a well-defined surjective $\overline K$-linear map
	\[ \overline{\mathcal Z}_n \to \overline{\mathcal Z}_{n,\nu}, \quad \zeta_A(\fs) \mapsto \zeta_A(\fs)_\nu. \] 

In \S \ref{sec: examples} we provide examples to illustrate our results and compare our work with the works of Anderson-Thakur \cite{AT90} and Chang-Mishiba \cite{CM19b}. Compared to Chang-Mishiba's construction, ours is much more direct, has smaller dimension (see Proposition \ref{prop: dimension}) and is in the same spirit of \cite{AT90} as illustrated in \S \ref{sec: relation with CM}. In \S \ref{sec: relation with AT} we present further examples inspired by those given in \cite{AT90}.

\medskip

\noindent {\bf Acknowledgments.} 

We are grateful to Chieh-Yu Chang and Federico Pellarin for carefully reading the first version of this manuscript and for offering numerous insightful comments and suggestions which greatly improve the content and the exposition of the paper. We thank Bruno Anglès, Yoshinori Mishiba and Jing Yu for useful suggestions and remarks.

The second author (T. ND.) was partially supported by CNRS IEA "Arithmetic and Galois extensions of function fields"; the ANR Grant COLOSS ANR-19-CE40-0015-02 and the Labex MILYON ANR-10-LABX-0070.


\section{Anderson $t$-modules and dual $t$-motives}

In this section we briefly review the basic theory of Anderson $t$-modules and dual $t$-motives and the relation between them. We refer the reader to \cite[\S 5]{HJ20} for more details.

\subsection{Notation} ${}$\par

In this paper we will use the following notation.
\begin{itemize}
\item $\mathbb N=\{1,2,\dots\}$: the set of positive integers.
\item $\mathbb Z^{\geq 0}=\{0,1,\dots\}$: the set of non-negative integers.
\item $\mathbb Z$: the set of integers.
\item $\mathbb F_q$: a finite field having $q$ elements.
\item $p$: the characteristic of $\mathbb F_q$.
\item $\theta,t$: independent variables over $\mathbb F_q$.
\item $A$: the polynomial ring $\mathbb F_q[\theta]$.
\item $A_+$: the set of monic polynomials in $A$.
\item $A_{+,d}$: the set of monic polynomials in  $A$ of degree $d$ for $d\in \mathbb N$.
\item $K=\mathbb F_q(\theta)$: the fraction field of $A$.
\item $\infty$: the unique place of $K$ which is a pole of $\theta$.
\item $v_\infty$: the discrete valuation on $K$ corresponding to the place $\infty$ normalized such that $v_\infty(\theta)=-1$.
\item $\lvert\cdot\rvert_\infty= q^{-v_\infty}$: an absolute value on $K$.
\item $K_\infty=\mathbb F_q((\frac{1}{\theta}))$: the completion of $K$ at $\infty$.
\item $\mathbb C_\infty$: the completion of a fixed algebraic closure $\overline K_\infty$ of $K_\infty$. The unique valuation of $\mathbb C_\infty$ which extends $v_\infty$ will still be denoted by $v_\infty$.
\end{itemize}

\subsection{Review of Anderson $t$-modules} ${}$\par 

Let $R$ be an $\Fq$-algebra and let $R[\tau]$ denote the (non-commutative) skew-polynomial ring with coefficients in $R$, subject to the relation for $r\in R$,
\[\tau r = r^q\tau.\]
We similarly define $R[\sigma]$, but we require additionally that $R$ must be a perfect ring, now subject to the relation
\[\sigma r = r^{1/q} \sigma.\]

We define Frobenius twisting on $R[t]$ by setting for $g = \sum_j c_j t^j \in  R[t]$, 
\begin{equation*} 
g\twisti = \sum_j c_j^{q^i} t^j.
\end{equation*}
We extend twisting to matrices in $\Mat_{i\times j}(R[t])$ by twisting coordinatewise.  

\begin{definition}
Let $R$ be an $\Fq$-algebra equipped with an $\Fq$-algebra homomorphism $i:A \to R$. 

1) A $d$-dimensional Anderson $t$-module over $R$ is an $\F_q$-algebra homomorphism $E:A\to \Mat_d(R)[\tau]$, such that for each $a\in A$, 
\[E_a = d[a] + E_{a,1} \tau + \dots ,\quad E_{a,i} \in \Mat_d(R)\]
where $d[a] = i(a)I_d + N$ for some nilpotent matrix $N\in \Mat_d(R)$ (depending on $a$).  

2) A Drinfeld module is a one-dimensional Anderson $t$-module $\rho:A \to R[\tau]$. 
\end{definition}

For the rest of this paper, we will drop $i$ when no confusion results. Anderson $t$-modules will sometimes be called $t$-modules.

The map $d[\cdot]:A \lra \Mat_d(\overline K)$ is a ring homomorphism which extends naturally to $d[\cdot]:K \lra \Mat_d(\overline K)$ and describes the $\Lie$ action of $E$.  Note that there is an implicit dependence of the map $d[\cdot]$ on the $t$-module $E$ which we omit, since it does not cause any confusion. Let $E$ be an Anderson $t$-module of dimension $d$ over $R$ as above and let $B$ be an $R$-algebra. We can define two $A$-module structures on $B^d$. The first one is denoted by $E(B)$ where $A$ acts on $B^d$ via $E$:
	\[ a \cdot \begin{pmatrix} b_1\\ \vdots\\b_d \end{pmatrix}= d[a] \begin{pmatrix} b_1\\ \vdots\\b_d \end{pmatrix} + \sum_{k\geq 1} E_{a,k}  \begin{pmatrix} b_1^{q^k}\\ \vdots\\b_d^{q^k}\end{pmatrix}, \quad \text{for } a\in A, \, \begin{pmatrix} b_1\\ \vdots\\b_d\end{pmatrix} \in B^d. \]
The second one is denoted by $\Lie_E(B)$ where $A$ acts on $B^d$ via $d[\cdot]$:
	\[ a \cdot \begin{pmatrix} b_1\\ \vdots\\b_d \end{pmatrix}= d[a] \begin{pmatrix} b_1\\ \vdots\\b_d \end{pmatrix}, \quad \text{for } a\in A, \, \begin{pmatrix} b_1\\ \vdots\\b_d\end{pmatrix} \in B^d. \]

From now on, we will always work with Anderson $t$-modules over $R$ such that $R \subset \C_\infty$. Let $E:A\to \Mat_d(\C_\infty)[\tau]$ be an Anderson module of dimension $d$ over $\C_\infty$. We define $\Exp_E$ to be the exponential series associated to $E$, which is the unique function on $\C_\infty^d$ such that as an $\F_q$-linear power series we can write
\begin{equation*} 
\Exp _E(\bz) = \sum_{i=0}^\infty Q_i \bz\twisti,\quad Q_i\in \Mat_d(\C_\infty), \bz\in \C_\infty^d,
\end{equation*}
with $Q_0 = I_d$ and such that for all $a\in A$ and $\bz\in \C_\infty^d$,
\begin{equation*} 
\Exp_E(d[a]\bz) = E_a(\Exp_E(\bz)).
\end{equation*}

The logarithm function $\Log_E$ is then defined as the formal power series inverse of $\Exp_E$.  We denote its power series as
\begin{equation*} 
\Log _E(\bz) = \sum_{i=0}^\infty P_i \bz\twisti,\quad P_i\in \Mat_d(\C_\infty), \bz\in \C_\infty^d.
\end{equation*}
We note that as functions on $\C_\infty^d$ the function $\Exp_E$ is everywhere convergent, whereas $\Log_E$ has some finite radius of convergence.

\subsection{Units and Stark units} ${}$\par \label{sec: units}

We define the Tate algebra $\TT$ over $\C_\infty$ as the space of power series in $t$ which converge on the disc of radius $1$, in other words,
\begin{equation*} 
   \TT := \left \{ \sum_{i=0}^\infty b_i t^i \in \power{\C_\infty}{t} \biggm| \big\lvert b_i  \big\rvert_{\infty} \to 0 \right \}.
\end{equation*}
We denote by $\bL$ the fraction field of $\TT$.

Define the Gauss norm $\lVert \cdot \rVert$ on $\TT$ by setting \[ \lVert f \rVert:=\max_{i} \left\{ |b_{i}|_{\infty}  \right\} \]
  for $f=\sum_{i \geq 0} b_{i}t^{i}\in \TT$. We then extend the Gauss norm to $\Mat_{\ell \times m}\left (\TT\right )$ by setting
	\[\lVert B \rVert = \max_{i,j} \left\{ \lVert B_{ij} \rVert \right\}\] 
for $B=(B_{ij}) \in  \Mat_{\ell \times m}\left (\TT\right )$.

In what follows we fix an Anderson $t$-module $E:A\to \Mat_d(\overline K)[\tau]$ of dimension $d$ over $\overline K$. Let $z$ be an indeterminate with $\tau z=z \tau$ and let $\bT_z(\C_\infty)$ be the Tate algebra in the variable $z$ with coefficients in $\C_\infty$. We define the canonical $z$-deformation of the $t$-module $E$ denoted by $\widetilde{E}$ to be the homomorphism of $\Fq[z]$-algebras $\widetilde{E}: A[z] \to \Mat_d(\overline K[z])[\tau]$ such that 
	\[ \widetilde{E}_a =\sum_{k\geq 0} E_{a,k} z^k \tau ^k, \quad a \in A. \]
Then there exists a unique series $\Exp_{\widetilde{E}}\in I_d+\tau \Mat_d(\overline K[z])[[\tau ]]$ such that
	\[ \Exp_{\widetilde{E} } d[a] = \widetilde{E}_a \Exp_{\widetilde{E} }, \quad a \in A, \]
(see \cite[\S 3]{APTR16} for more details).  One can show that if $\Exp_E=\sum_{i\geq 0} Q_i \tau ^i$, then $\Exp_{\widetilde{E} }=\sum_{i\geq 0} Q_i z^i \tau ^i$. In particular, $\Exp_{\widetilde{E} }$ converges on $\Lie_{\widetilde{E}}(\bT_z(\C_\infty))$ and induces  a homomorphism of $A[z]$-modules 
	\[ \Exp_{\widetilde{E} }:\Lie_{\widetilde{E}}(\bT_z(\C_\infty))\to \widetilde{E}(\bT_z(\C_\infty)). \]
We denote by $\Log_{\widetilde{E}} \in I_d+\tau \Mat_d(\overline K[z])[[\tau ]]$ the inverse of $\Exp_{\widetilde{E}}$. Similarly, if $\Log_E=\sum_{i\geq 0} P_i \tau ^i$, then $\Log_{\widetilde{E} }=\sum_{i\geq 0} P_i z^i \tau ^i$.

We denote by $\ev: \Lie_{\widetilde{E}}(\bT_z(\C_\infty))\to \Lie_E(\C_\infty)$ the evaluation map at $z=1$. If $\bx \in \Lie_{\widetilde{E}}(\bT_z(\C_\infty))$, then we also write $\bx \big|_{z=1}$ for $\ev(\bx)$.  Following \cite{ANDTR17a,APTR18,ATR17,Tae10} we introduce various notions of units and  of logarithmic identities for Anderson $t$-modules.

\begin{definition} \label{definition: log_St}
Letting $\bz \in \C_\infty^d$, we put $\mathbf{Z}:=\Exp_{E'}(\bz)$. 

1) Following Taelman \cite{Tae10} we say that $\bz$ is {\it a unit for $E'$} if $\mathbf{Z}=\Exp_{E'}(\bz) \in \overline K^d$. The latter equality is also called {\it a log-algebraicity identity for $E'$}.

2) We say that $\bz$ is {\it a Stark unit for $E'$} if we can write $\bz=\bx \big|_{z=1}$ for some $\bx \in \Lie_{\widetilde E'}(\bT_z(\C_\infty))$ satisfying $\Exp_{\widetilde E'}(\bx) \in \overline K[z]^d$.  We also say that we have {\it a Stark logarithmic identity for $\bz$}, and we write
	\[ \Log_{E'}^{St}(\mathbf{Z}) = \bz.\]
\end{definition}

\begin{remark} \label{rem:units vs Stark units}
1) We refer the reader to \cite{Tae10,Tae12a} for more details about arithmetic of units.

2) The first example of Stark units appeared in the pioneering work of Anderson \cite{And96} in which he introduced the analogue of cyclotomic units for the Carlitz module. Recently, based on the fundamental work of Pellarin in \cite{Pel12},  Anglès, Tavares Ribeiro and the second author have introduced and developed the theory of Stark units for Anderson modules (see \cite{ANDTR17a,ANDTR20a,ATR17}). This notion turns out to be a powerful tool for  investigating log-algebraic identities \cite{ANDTR17a,ANDTR20a,APTR18} as well as the class formula à la Taelman in full generality \cite{ANDTR20b}. 
\end{remark}

We note that if $\bz$ is a Stark unit for $E$, then it is also a unit for $E$. In fact, we set $\mathbf{Z}:=\Exp_E(\bz)$. By Definition \ref{definition: log_St} there exists $\bx \in \Lie_{\widetilde{E}}(\bT_z(\C_\infty))$ such that $\Exp_{\widetilde E}(\bx) \in \overline K[z]^d$ and $\bz=\bx \big|_{z=1}$. It follows that 
	\[ \mathbf{Z}=\Exp_E(\bz)=\Exp_{\widetilde E}(\bx) \big|_{z=1} \in \overline K^d. \]
Hence, $\bz$ is also a unit for $E$.
 	
\begin{remark}
1) We continue with the above notation. If we write the polynomial $\Exp_{\widetilde E}(\bx)=\sum_{i=0}^m \mathbf{Z}_i z^i$ with $\mathbf{Z}_i \in \overline K^d$ ($0 \leq i \leq m$), then the fact that $\bx \in \Lie_{\widetilde{E}}(\bT_z(\C_\infty))$ is equivalent to the following condition
	\[ P_k \mathbf{Z}_0^{(k)}+ \dots+P_{k-m} \mathbf{Z}_m^{(k-m)} \to 0 \quad \text{when } k \to +\infty. \]
Here we understand that $P_{k-i}=0$ if $k-i<0$. And we get
	\[ \bz=\sum_{k\geq 0} \left(P_k \mathbf{Z}_0^{(k)}+ \dots+P_{k-m} \mathbf{Z}_m^{(k-m)} \right). \]
In other words, $\bz$ is a kind of re-indexed logarithms as already observed in \cite{Gre17b,GP18,Tha92}.

2) If the polynomial $\Exp_{\widetilde E}(\bx)$ is a monomial, then we express $\Exp_{\widetilde E}(\bx)= \mathbf{Z}_i z^i$ for some $i \geq 0$. It is clear that $\mathbf{Z}_i$ lies in the domain of convergence of $\Log_E$ and $\bz$ is a logarithm: 
	\[ \bz=\Log_E(\mathbf{Z}_i). \]
\end{remark}

\subsection{Review of dual $t$-motives} ${}$\par \label{sec: dual motives}

We briefly review the notion of dual $t$-motives and explain the relation with $t$-modules thanks to Anderson  (see \cite[\S 4]{BP20} and \cite[\S 5]{HJ20} for more details).

\begin{definition} 
An effective dual $t$-motive is a $\overline K[t,\sigma]$-module $\calM'$ which is free and finitely generated over $\overline K[t]$ such that for $\ell\gg 0$ we have
	\[(t-\theta)^\ell(\calM'/\sigma \calM') = \{0\}.\]
\end{definition}

\begin{remark}
1) We mention that effective dual $t$-motives are called Frobenius modules in \cite[\S 2.2]{CPY19}. 

\noindent 2) Note that Hartl and Juschka \cite[\S 4]{HJ20} introduced a more general notion of dual $t$-motives. In particular, effective dual $t$-motives are always dual $t$-motives.
\end{remark}

Throughout this paper we will always work with effective dual $t$-motives. Therefore, we will sometimes drop the word "effective" where there is no confusion.

Let $\calM'$ and $\calM''$ be two effective dual $t$-motives. Then a morphism of effective dual $t$-motives $\calM' \to \calM''$ is just a homomorphism of left $\overline K[t,\sigma]$-modules. We denote by $\cF$ the category of effective dual $t$-motives equipped with the trivial object $\mathbf{1}$. 

We say that an object $\calM'$ of $\cF$ is given by a matrix $\Phi' \in \Mat_r(\overline K[t])$ if $\calM'$ is a $\overline K[t]$-module free of rank $r$ and the action of $\sigma$ is represented by the matrix $\Phi'$ on a given  $\overline K[t]$-basis for $\calM'$.

Recall that $\bL$ denotes the fraction field of the Tate algebra $\TT$. We say that an object $\calM'$ of $\cF$ is uniformizable or rigid analytically trivial if there exists a matrix $\Psi' \in \text{GL}_r(\bL)$ satisfying $\Psi'^{(-1)}=\Phi' \Psi'$. The matrix $\Psi'$ is called a rigid analytic trivialization of $\calM'$. By \cite[Proposition 3.3.9]{Pap08} there exists a rigid analytic trivialization $\Psi_0'$ of $\calM'$ with $\Psi_0' \in \text{GL}_r(\TT)$. Further, if $\Psi'$ is a rigid analytic trivialization of $\calM'$, then $\Psi'=\Psi_0' B$ with $B \in \Mat_r(\Fq(t))$.

In what follows, let $\calM'$ be an effective dual $t$-motive of rank $r$ over $\overline K[t]$ which is also free and finitely generated of rank $d$ over $\overline K[\sigma]$. Let $\bm=\{m_1,\dots,m_r\}$ denote a $\overline K[t]$-basis for $\calM'$ and let $\bw=\{w_1,\dots,w_d\}$ denote a $\overline K[\sigma]$-basis for $\calM'$. Using the basis $\bm=\{m_1,\dots,m_r\}$, we identify 
$\overline K[t]^r$ with $\calM'$ by the map
\begin{equation}\label{eq:iota m}
\iota_\bm:\overline K[t]^r \to \calM', \quad (g_1,\ldots,g_r)^\top \mapsto g_1 m_1+\ldots+g_r m_r.
\end{equation}
We extend $\iota_\bm$ to Tate algebras still denoted by $\iota_\bm:\bT^r \to \calM' \otimes_{\overline K[t]} \bT$. 

Similarly, using the basis $\bw=\{w_1,\dots,w_d\}$, we also identify 
$\overline K[\sigma]^d$ with $\calM'$
\begin{equation}\label{eq:iota sigma}
\iota_\bw:\overline K[\sigma]^d \to \calM', \quad (h_1,\ldots,h_d)^\top \mapsto h_1 w_1+\ldots+h_d w_d.
\end{equation}
Letting $\iota=\iota\inv_\bw \circ \iota_\bm$, we get the map 
\begin{equation}\label{eq:iotamap}
\iota:\overline K[t]^r \to \overline K[\sigma]^d
\end{equation}
which ``switches" between these bases.

Once we fix the $\overline K[t]$-basis $\bm$, then there exists some matrix $\Phi' \in \Mat_r(\overline K[t])$ such that $\sigma$ acts on $\overline K[t]^r$ by inverse twisting and right multiplication by $\Phi'$ --- or we may transpose to get a left multiplication:
\[\sigma\begin{pmatrix}
g_1\\
\vdots \\
g_r
\end{pmatrix} = \Phi'^\top \begin{pmatrix}
g_1\\
\vdots \\
g_r
\end{pmatrix}\twistinv,\quad g_i \in \overline K[t].\]
We note that this $\sigma$-action extends to $\TT^r \isom M \otimes _{{\overline K}[t]} \TT$ in the natural way.  
	
We recall the definition of the maps 
	\[\delta_0:\calM' \to \overline K^d, \quad \delta_1:\calM' \to \overline K^d \]
from \cite[Proposition 5.6]{HJ20}. Letting $m \in \calM'$, we write
	\[ m = c_{0,1}w_1 + \dots + c_{0,d}w_d + c_{1,1}\sigma(w_1) + \dots + c_{1,d}\sigma(w_d) + \dots, \quad c_{i,j} \in \overline K.\]
We set
\begin{equation} \label{D:deltaextendedmaps}
\delta_0(m) = \begin{pmatrix}
c_{0,1}\\
\vdots \\
c_{0,d}
\end{pmatrix},\quad
\delta_1(m) = \begin{pmatrix}
c_{0,1}\\
\vdots \\
c_{0,d}
\end{pmatrix} + 
\begin{pmatrix}
c_{1,1}\\
\vdots \\
c_{1,d}
\end{pmatrix}\twist
+
\cdots.
\end{equation}
Similarly, letting $z$ be a variable, we define the $z$-version $\delta_{1,z}$ of the map $\delta_1$ by
\begin{equation*}
\delta_{1,z}(m) = \begin{pmatrix}
c_{0,1}\\
\vdots \\
c_{0,d}
\end{pmatrix} + 
\begin{pmatrix}
c_{1,1}\\
\vdots \\
c_{1,d}
\end{pmatrix}\twist z
+
\begin{pmatrix}
c_{2,1}\\
\vdots \\
c_{2,d}
\end{pmatrix}^{(2)} z^2
+
\cdots
.
\end{equation*}

We then observe that the kernel of $\delta_1$ equals $\sigma-1$, and thus can write the commutative diagram
\begin{align*} 
\begin{CD}
\calM'/(\sigma-1)\calM'  @>{\delta_1}>> \overline K^d \\
@V{a(t)}VV @VV{E'_a}V \\
\calM'/(\sigma-1)\calM' @>{\delta_1}>> \overline K^d
\end{CD}
\end{align*}
where the left vertical arrow is multiplication by $a(t)$ and the right vertical arrow is the map induced by multiplication by $a$, which we denote by $E'_a$.  By \cite[Proposition 5.6]{HJ20}, $E'$ defines an Anderson $t$-module over $\overline K$, and we call this the Anderson $t$-module associated with $\calM'$.  Thus we have canonical isomorphisms of $\Fq[t]$-modules
	\[ \calM'/\sigma \calM' \iso \Lie_{E'}(\overline K), \]
and
	\[ \calM'/(\sigma-1)\calM' \iso E'(\overline K). \] 
	
Note that the map $\delta_0:\calM' \lra \overline K^d$ factors through $\calM'/(t-\theta)^d \calM'$. Thus it extends to $\calM'\otimes _{{\overline K}[t]} \TT$ and  $\calM'\otimes _{{\overline K}[t]} \overline K[t]_{(t-\theta)}$ in the natural way where $\calM'\otimes _{{\overline K}[t]} \overline K[t]_{(t-\theta)}$ denotes the localization of $\overline K[t]$ outside the prime ideal $t-\theta$ of $\overline K[t]$ (see \cite[Proposition 5.6]{HJ20}).

\begin{remark}
Anderson showed that the functor $\calM' \mapsto E'$ gives rise to an equivalence from the category of effective dual $t$-motives $\calM'$ that are free and finitely generated as $\overline K[\sigma]$-modules onto the full subcategory of so-called $A$-finite Anderson $t$-modules (see for example \cite[Theorem 5.9]{HJ20}).
\end{remark}


\section{The main result} \label{sec: main theorem}

This section aims to prove the main result of this paper (see Theorem \ref{theorem: main theorem}). We establish refinements of log-algebraic identities for Anderson $t$-modules which provide a general framework for many earlier results which have been proven in a somewhat ad-hoc fashion. Finally we discuss relations with Anderson's analytic theory of $A$-finite $t$-modules and emphasize the advantage of Stark units compared to units. 

\subsection{$\textrm{Ext}^1$-modules and $t$-modules} ${}$\par

In this section we explain a deep connection due to Anderson between some $\textrm{Ext}^1$-modules and Anderson $t$-modules. We follow closely the presentation given in \cite[\S 5.2]{CPY19}. 

In what follows, we fix $\calM'$ to be an effective dual $t$-motive of rank $r$ over $\overline K[t]$. Recall the definitions of $\bm=\{m_1,\dots,m_r\}$, $\bw=\{w_1,\dots,w_d\}$, $\iota_\bm$, $\iota_\bw$, $\iota$, $\delta_0$ and $\delta_1$  from \S \ref{sec: dual motives}.  Composing with the map $\iota_\bm$ defined in \eqref{eq:iota m}, we get three maps $\delta_0 \circ \iota_\bm:\overline K[t]^r \to \overline K^d$, $\delta_1 \circ \iota_\bm:\overline K[t]^r \to \overline K^d$ and $\delta_{1,z} \circ \iota_\bm:\overline K[t]^r \to \overline K[z]^d$.
From now on, to avoid heavy notation, we still denote these maps by 
	\[ \delta_0:\overline K[t]^r \to \overline K^d \] 
and 
	\[ \delta_1:\overline K[t]^r \to \overline K^d, \quad \delta_{1,z}:\overline K[t]^r \to \overline K[z]^d. \] 

We denote by $\Phi' \in \Mat_r(\overline K[t])$ its associated matrix. Based on an unpublished work of Anderson \cite[Theorem 5.2.1]{CPY19} (see also \cite{Tae20}), one shows that if $\bm=(m_1,\dots,m_r)$ is a $\overline K[t]$-basis of $\calM'$ on which the $\sigma$-action is represented by the matrix $\Phi'$ and $\calM \in \textrm{Ext}^1_\cF(\mathbf{1},\calM')$ is the dual $t$-motive given by the matrix
\begin{align*}
\Phi=\begin{pmatrix}
\Phi' & 0  \\
\bff & 1 
\end{pmatrix}, \quad \text{with } \bff=(f_1,\dots,f_r)\in \Mat_{1\times r}(\overline K[t]),
\end{align*}
then the map 
\begin{align} \label{eq: Ext}
\alpha: \textrm{Ext}^1_\cF(\mathbf{1},\calM') & \to \calM'/(\sigma-1)\calM' \\
\calM & \mapsto f_1 m_1 + \dots +f_r m_r \notag
\end{align}
is an isomorphism of $\Fq[t]$-modules. 

For such an extension $\calM \in \textrm{Ext}^1_\cF(\mathbf{1},\calM')$, we know that $\calM$ is uniformizable by \cite[Lemma 4.20]{HJ20}. By \cite[Proposition 3.3.9]{Pap08} there exists a rigid analytic trivialization $\Psi \in \Mat_{r+1}(\bL)$ of $\Phi$ such that if we set $\Upsilon:=\Psi\inv$, then we require that $\Upsilon \in \Mat_{r+1}(\bT)$. We put 
\begin{align*}
\Psi=\begin{pmatrix}
\Psi' & 0  \\
\Psi_{r+1} & 1 
\end{pmatrix} \in \text{GL}_{r+1}(\bL),
\end{align*}
and
\begin{align} \label{eq: inverse of Psi}
\Upsilon=\begin{pmatrix}
\Upsilon' & 0  \\
\Upsilon_{r+1} & 1 
\end{pmatrix} \in \Mat_{r+1}(\TT)
\end{align}
where 
	\[ \Psi_{r+1}=(\Psi_{r+1,1},\ldots,\Psi_{r+1,r}) \in \Mat_{1 \times r}(\bL) \] 
and 
	\[ \Upsilon_{r+1}=(\Upsilon_{r+1,1},\ldots,\Upsilon_{r+1,r}) \in \Mat_{1 \times r}(\bT). \] 
In particular, $\Psi'$ is a rigid analytic trivialization matrix, i.e. $\Psi\twistinv = \Phi \Psi$.

\begin{remark} \label{remark:denominator}
By \cite[Proposition 3.3.9]{Pap08} again, we know that there exists a polynomial $F \in \Fq[t]$ such that $F \Psi \in \Mat_{r+1}(\bT)$.
\end{remark}

Inspired by \cite{CPY19} we define the point $\bv_{\calM} \in E'(\overline K)$ by the image of $\calM$ via the composition of isomorphisms
\begin{equation}\label{D:vMdef}
\delta_1 \circ \alpha: \textrm{Ext}^1_\cF(\mathbf{1},\calM') \iso \calM'/(\sigma-1)\calM' \iso E'(\overline K). 
\end{equation} 
Thus $\bv_{\calM}=\delta_1   (\bff^\top) \in E'(\overline K)$. We also set 
	\[ \bv_{\calM,z}:=\delta_{1,z}   (\bff^\top) \in \widetilde{E}'(\overline K) \] 
where $\widetilde{E}'$ is the $z$-deformation $t$-module attached to $E'$ (see \S \ref{sec: units}).

To end this section we mention that $\textrm{Ext}^1_\cF(\mathbf{1},\calM')$ has a natural $\Fq[t]$-module structure defined as follows. Let $\calM_1$ and $\calM_2$ be two objects of $\textrm{Ext}^1_\cF(\mathbf{1},\calM')$ defined by the matrices 
\begin{align*}
\Phi_1=\begin{pmatrix}
\Phi' & 0  \\
\bv_1 & 1 
\end{pmatrix} \in \Mat_{r+1}(\overline K[t]), \quad \bv_1 \in \Mat_{1 \times r}(\overline K[t]),
\end{align*}
and 
\begin{align*}
\Phi_2=\begin{pmatrix}
\Phi' & 0  \\
\bv_2 & 1 
\end{pmatrix} \in \Mat_{r+1}(\overline K[t]), \quad \bv_2 \in \Mat_{1 \times r}(\overline K[t]).
\end{align*}
Then for any $a_1,a_2 \in \Fq[t]$, $a_1*\calM_1+a_2*\calM_2$ is defined to be the class in  $\textrm{Ext}^1_\cF(\mathbf{1},\calM')$ represented by
\begin{align*}
\begin{pmatrix}
\Phi' & 0  \\
a_1\bv_1+a_2\bv_2 & 1 
\end{pmatrix} \in \Mat_{r+1}(\overline K[t]).
\end{align*}

\begin{remark} \label{rem:torsion class}
Let $\calM$ be a class in $\textrm{Ext}^1_\cF(\mathbf{1},\calM')$. Let $E'$ be the $t$-module attached to $\calM'$ and $\bv_{\calM} \in E'(\overline K)$ be the special point attached to $\calM$ as above. We observe that $\calM$ is a torsion class in $\textrm{Ext}^1_\cF(\mathbf{1},\calM')$ if and only if $\bv_{\calM}$ is a torsion point in $E'(\overline K)$.
\end{remark}

\subsection{Statement of the Main Result} ${}$\par

We keep the above notation. We give a definition which simplifies notation enormously throughout the paper.

\begin{definition} \label{definition: split log}
Given a $d$-dimensional $t$-module $E$ over $\C_\infty$ with logarithm function $\Log_E$ and two points $\bz, \mathbf{Z} \in \C_\infty^d$, we say that we have a split-logarithmic identity (for $\bz$)
\[\Log_E^!(\mathbf{Z}) = \bz\]
if there exists some finite collection of pairs $\{(a_i,\bu_i)\}\subset A\times \C_\infty^d$ where each $\bu_i$ is in the domain of convergence of $\Log_E$, such that
\begin{align*}
\bz &= \sum_{i} d[a_i] \Log_E(\bu_i), \\
\mathbf{Z} &= \sum_{i} E_{a_i} \bu_i.
\end{align*}
\end{definition}

\begin{remark} \label{remark: split log identity}
1) We note that if $\Log_E^!(\mathbf{Z}) = \bz$, then $\Log_E^{\text{St}}(\mathbf{Z}) = \bz$ and $\Exp_E(\bz)=\mathbf{Z}$.   Further, each $\Log_E(\bu_i)$ is a Stark unit for $E$. This implies that $\bz$, which is a linear combination of Stark units with coefficients in $A$ (via the action $a \mapsto d[a]$), is also a Stark unit for $E$. 

2) Split-logarithmic identities are common when one discovers a log-algebraic identity of the form $\Exp_E(\bz) = \mathbf{Z}$, but $\mathbf{Z}$ is not inside the domain of convergence of $\Log_E$.  In some cases one can decompose $\mathbf{Z}$ into a sum of terms $E_{a_i}\bu_i$ as above, such that each $\bu_i$ is inside the domain of convergence of $\Log_E$.  Such is the case in the celebrated log-algebraicity theorem of Anderson and Thakur for tensor powers of the Carlitz module \cite[Theorem 3.8.3]{AT90} (see also \cite[\S II]{Tha92}).  

3) We comment that each time we give a split-logarithmic identity in this paper, the exact linear combination of $(a_i,\bu_i)$ is given explicitly in the proof.  Thus there is nothing mysterious about these split-logarithmic identities, they are merely a tool to unify notation.
\end{remark}

We are ready to state the main result of this paper which provides log-algebraic identities for Anderson $t$-modules.
	
\begin{theorem} \label{theorem: main theorem}
We keep the above notation and let $\Log^{\text{St}}_{E'}$ and $\Log^!_{E'}$ be defined in as Definitions \ref{definition: log_St} and \ref{definition: split log}, respectively. 

Let $F \in \Fq[t]$ such that $F \Psi \in \Mat_{r+1}(\bT)$ (see Remark \ref{remark:denominator}). We suppose that $F \Psi_{r+1}^{(k)}$ converges to $0$ as a vector of functions in $\Mat_{1\times r}(\TT)$.  
\begin{enumerate}
\item[(a)] Let $\Upsilon_{r+1}$ be defined as in \eqref{eq: inverse of Psi}. Then 
\begin{enumerate}
\item[(a1)] The point $\delta_0   (\bff^\top-\Upsilon_{r+1}^\top)$ is a Stark unit for $E'$.

\item[(a2)] We have a Stark logarithmic identity
	\[ \Log^{\text{St}}_{E'}(\bv_{\calM})=\delta_0   (\bff^\top-\Upsilon_{r+1}^\top). \]
\end{enumerate}

\item[(b)] Let $\alpha$ be the map defined in \eqref{eq: Ext}. Suppose that there exists some finite collection of triples $\{(\ell_i,n_i,\bu_i=(u_{i,1},\ldots,u_{i,d})^\top)\}\subset \mathbb Z^{\geq 0} \times \mathbb Z^{\geq 0} \times \C_\infty^d$ where each $\bu_i$ is in the domain of convergence of $\Log_{E'}$, such that
	\[\alpha(\calM) = \sum_i t^{n_i} \sigma^{\ell_i} \left( \sum_{j=1}^d u_{i,j} w_j \right),\]
where $w_j$ elements of the $\overline K[\sigma]$-basis $\bw$.  Then we have a split-logarithmic identity
	\[\Log_{E'}^!(\bv_{\calM}) = \delta_0   (\bff^\top-\Upsilon_{r+1}^\top).  \]
\end{enumerate}
If we additionally have that $\delta_0   (\bff^\top)=0$, then the right-hand side of the main equations in Parts (a) and (b) above is simply given by $\delta_0   (-\Upsilon_{r+1}^\top)$.
\end{theorem}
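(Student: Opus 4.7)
The plan is to exploit the coboundary identity
\begin{equation*}
\bff = \Upsilon_{r+1} - \Upsilon_{r+1}^{(-1)}\Phi',
\end{equation*}
obtained by reading the last row of $\Upsilon = \Upsilon^{(-1)}\Phi$ (equivalently, by block-inverting $\Psi^{(-1)} = \Phi\Psi$), together with the block-inverse identity $\Upsilon_{r+1} = -\Psi_{r+1}\Upsilon'$. Iterating the coboundary yields, for every $n \ge 1$,
\begin{equation*}
\Upsilon_{r+1}^{(n)} - \Upsilon_{r+1}\Phi'^{(1)}\cdots\Phi'^{(n)} = \sum_{k=1}^{n} \bff^{(k)}\,\Phi'^{(k+1)}\cdots\Phi'^{(n)}.
\end{equation*}
Since $\Upsilon' \in \Mat_r(\bT)$, the hypothesis $F\Psi_{r+1}^{(k)}\to 0$ in $\Mat_{1\times r}(\TT)$ forces $F\Upsilon_{r+1}^{(k)}\to 0$ as well, so after $F$-clearing the telescoping closes in the Tate algebra.

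First, I would make the logarithm of $E'$ explicit. Following the approach of the joint work \cite{ANDTR20a} of the second author with Angl\`es and Tavares Ribeiro, one writes the coefficient matrices $P_k$ of $\Log_{E'} = \sum_{k\ge 0} P_k \tau^k$ intrinsically through the dual $t$-motive: each $P_k$ is $\delta_0$ (extended to $\calM' \otimes_{\overline K[t]} \TT$) composed with iterated inverse-twisting and division by $\Phi'$ inside the localization at $(t-\theta)$. Passing to the $z$-deformation $\widetilde E'$, so that $\Log_{\widetilde E'} = \sum_{k\ge 0} P_k z^k \tau^k$, and substituting the deformed point $\bv_{\calM,z} = \delta_{1,z}(\bff^\top)$, each term of $\Log_{\widetilde E'}(\bv_{\calM,z})$ becomes a $z^k$-weighted $k$-th Frobenius twist of $\bff^\top$ coupled with iterated $\Phi'$-translates.

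Second, the iterated coboundary identity above collapses this $z$-series, after telescoping, into $\delta_0(\bff^\top - \Upsilon_{r+1}^\top)$ plus a tail indexed by $\Upsilon_{r+1}^{(n)}$ with $z^n$-weight. The convergence hypothesis $F\Psi_{r+1}^{(k)}\to 0$ forces this tail, after multiplication by $F$, to tend to $0$ in $\Mat_{d \times 1}(\bT_z(\C_\infty))$; since $F \in \Fq[t]$ acts invertibly on the torsion-free part of the relevant Tate-algebra module, the full series $\Log_{\widetilde E'}(\bv_{\calM,z})$ lies in $\Lie_{\widetilde E'}(\bT_z(\C_\infty))$. Because $\Exp_{\widetilde E'}$ of this element equals $\bv_{\calM,z} \in \overline K[z]^d$ tautologically, specializing at $z = 1$ delivers the Stark logarithmic identity of part (a). For part (b), the hypothesized decomposition of $\alpha(\calM)$ as a sum of $t^{n_i}\sigma^{\ell_i}(\sum_j u_{i,j} w_j)$ transports under $\delta_1$ to $\bv_{\calM} = \sum_i E'_{\theta^{n_i}}(\bu_i^{(\ell_i)})$ with $\bu_i = (u_{i,j})_j \in \C_\infty^d$ (using that $t$-action on $\calM'/(\sigma-1)\calM'$ is $E'_\theta$ and $\sigma$-action is Frobenius twisting on the $t$-module side). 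Because each $\bu_i^{(\ell_i)}$ lies in the convergence domain of $\Log_{E'}$, applying part (a) summand-by-summand upgrades the Stark logarithm to the genuine split-logarithmic identity.

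The main technical obstacle is the simultaneous control of the $t$-adic Gauss norm, where the hypothesis $F\Psi_{r+1}^{(k)}\to 0$ lives, and the $z$-filtration, where the formal series $\Log_{\widetilde E'}$ is defined. This is precisely why working with the canonical $z$-deformation $\widetilde E'$ (rather than with $\Log_{E'}(\bv_{\calM})$ directly) is essential: the $z$-variable keeps the logarithm series formally convergent while the Tate-algebra hypothesis closes the telescoping in the limit, and the specialization at $z = 1$ bypasses the possible failure of $\bv_{\calM}$ to lie inside the $\infty$-adic convergence domain of $\Log_{E'}$.
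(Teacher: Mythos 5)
Your overall strategy matches the paper's: write the coefficients of $\Log_{E'}$ via the dual $t$-motive formula of \cite{ANDTR20a}, pass to the canonical $z$-deformation $\widetilde E'$, and telescope using the coboundary identity read off the last row of $\Upsilon^{(-1)}=\Upsilon\Phi^{-1}$. However, there are two genuine gaps.

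The convergence step is wrong as written. You claim that $\Upsilon'\in\Mat_r(\bT)$ together with $F\Psi_{r+1}^{(k)}\to 0$ forces $F\Upsilon_{r+1}^{(k)}\to 0$. From $\Upsilon_{r+1}=-\Psi_{r+1}\Upsilon'$ one has $F\Upsilon_{r+1}^{(k)}=-F\Psi_{r+1}^{(k)}\,\Upsilon'^{(k)}$, and since Frobenius twisting raises Gauss norms to a $q^k$-th power, $\lVert\Upsilon'^{(k)}\rVert=\lVert\Upsilon'\rVert^{q^k}$ blows up whenever $\lVert\Upsilon'\rVert>1$ (which already happens for the Carlitz motive, where $\Upsilon'=\Omega^{-1}$). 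The quantity that actually tends to zero is the $n$th tail of your iterated coboundary \emph{after} post-multiplication by $(\Phi'^{(n)})^{-1}\cdots(\Phi'^{(1)})^{-1}$, namely
\[
\Upsilon_{r+1}^{(n)}\,(\Phi'^{(n)})^{-1}\cdots(\Phi'^{(1)})^{-1}=-\Psi_{r+1}^{(n)}\Upsilon',
\]
which pairs the twisted $\Psi_{r+1}^{(n)}$ with the \emph{fixed} $\Upsilon'$; multiplied by $F$ this does go to $0$. The paper packages this directly as $\Theta'^{(1)}\cdots\Theta'^{(n)}(\bff^\top)^{(n)}=\Upsilon'^{\top}(\Psi_{r+1}^{\top(n-1)}-\Psi_{r+1}^{\top(n)})$, avoiding the slip.

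In part (b), the decomposition does not produce Frobenius-twisted arguments. You assert $\bv_\calM=\sum_i E'_{\theta^{n_i}}(\bu_i^{(\ell_i)})$, appealing to ``the $\sigma$-action is Frobenius twisting on the $t$-module side''; but $\sigma$ acts as the \emph{identity} on $\calM'/(\sigma-1)\calM'\cong E'(\overline K)$. A direct computation with $\delta_{1,z}$ gives $\delta_{1,z}\bigl(\sigma^{\ell}\sum_j u_j w_j\bigr)=\bu\,z^{\ell}$ (no twist), so $\bv_{\calM,z}=\sum_i\widetilde E'_{\theta^{n_i}}(\bu_i z^{\ell_i})$, $\Log_{\widetilde E'}(\bu_i z^{\ell_i})=z^{\ell_i}\Log_{\widetilde E'}(\bu_i)$, and at $z=1$ one obtains $\bv_\calM=\sum_i E'_{\theta^{n_i}}\bu_i$. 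This matters because the theorem's hypothesis places $\bu_i$, not $\bu_i^{(\ell_i)}$, in the convergence domain of $\Log_{E'}$.
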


\begin{remark}
1) By Remark \ref{remark: split log identity}, Part (b) could be considered as a refinement of Part (a).

2) It is clear that the condition that $F \Psi_{r+1}^{(k)}$ converges to $0$ as a vector of functions in $\Mat_{1\times r}(\TT)$ does not depend on the choice of $F \in \Fq[t]$. In particular, when $\Psi \in \text{GL}_{r+1}(\TT)$, we could take $F=1$ as we will see in the next sections.
\end{remark}

\begin{remark}
We mention below some known examples of Theorem \ref{theorem: main theorem}.

1) As mentioned before, Anderson and Thakur \cite{AT90} gave split-logarithmic identities for Carlitz zeta values.

2) Chang, Mishiba and the first author gave split-logarithmic identities for Carlitz multiple star polylogarithms (see \cite{CGM19,CM19b} and \S \ref{sec: CM model} for more details).

3) For higher genus curves, Thakur studied special zeta values associated to rings $A$ such that $A$ is principal. For such rings, he obtained both Stark logarithmic identity and split-logarithmic identity for special zeta values at $1$ (see \cite[\S II]{Tha92}).

4) For elliptic curves, Stark logarithmic identities for special zeta values can be obtained using minor adjustments to \cite{Gre17b,GND20,GP18}. However, it seems very difficult to obtain split-logarithmic identities for these values (see \cite[Remark 6.4]{Gre17b}).
\end{remark}

\subsection{Proof of the main theorem: Part (a)} ${}$\par

In this section we prove Theorem \ref{theorem: main theorem}, Part (a). The proof is divided into several steps.

\noindent {\bf Step 1.} We compute the coefficients of $\Log_{E'}$. We set
\begin{align*}
\Theta:=(\Phi\inv)^\top \in \Mat_{r+1}(\overline K(t)),
\end{align*}
and 
\begin{align*}
\Theta':=({\Phi'}\inv)^\top \in \Mat_r(\overline K(t)).
\end{align*}
Now if we write 
	\[ \Log_{E'}=\sum_{n \geq 0} P_n \tau^n, \]
then
	\[ \Log_{\widetilde{E}'}=\sum_{n \geq 0} P_n z^n \tau^n. \]
	
By \cite[Proposition 2.2]{ANDTR20a}, for $n \geq 0$, the $n$th coefficient of the logarithm series of $E'$ is given as follows. Let $\bv=(v_1,\dots,v_d)^\top \in \overline K^d$. Letting $m:=\iota_\bw(\bv)=v_1 w_1 + \dots + v_d w_d$, we see that $m$ belongs to $\calM'$. Thus we can express it in the $\overline K[t]$-basis $\{m_1,\dots,m_r\}$ using the map $\iota$ from \eqref{eq:iotamap}
	\[ \iota\inv(\bv)=\iota\inv(v_1,\dots,v_d)^\top = (g_1,\dots,g_r)^\top\in \overline K[t]^r. \]
In other words, $m=v_1 w_1 + \dots + v_d w_d=g_1 m_1+\ldots+g_r m_r$. Then by \cite[Proposition 2.2]{ANDTR20a} (see also \cite[Lemma 4.2.1]{CGM19} for an explicit example of this) we have
	\[ P_n \bv^{(n)} = \delta_0   (\Theta'^{(1)} \dots \Theta'^{(n)} \iota\inv (\bv)^{(n)}). \]

\noindent {\bf Step 2.} We recall that 
	\[ \bv_{\calM,z}:=\delta_{1,z}    (\bff^\top) \in \widetilde{E}'(\overline K[z]), \]
and 
	\[ \bv_{\calM}={\bv_{\calM,z}} \big|_{z=1}. \] 
This means that if we write
	\[ f_1 m_1 + \dots f_r m_r = v_{0,1}w_1 + \dots + v_{0,d}w_d + v_{1,1}\sigma(w_1) + \dots + v_{1,d}\sigma(w_d) + \dots, \quad \text{with } v_{i,j} \in \overline K, \]
and set $\bv_i = (v_{i,1},\dots,v_{i,d})^\top$, then we get
\begin{equation*}
\bv_{M,z}=\delta_{1,z}(\bff^\top) =\bv_0 + \bv_1\twist z + \bv_2 \twistk{2} z^2 + \dots.
\end{equation*}

Let $\bv=(v_1,\ldots,v_d)^\top \in \Mat_{d \times 1}(\overline K)$. By \eqref{eq:iota sigma}, $\bv$ can be identified as an element $\iota_\bw(\bv)=v_1w_1+\ldots+v_dw_d$ of $\calM'$. We recall that $\iota\inv_\bm(\sigma(\iota_\bw(\bv)))=\Phi'^\top \iota\inv(\bv)^{(-1)}$. Then we get an equality of formal series in $z$ (we will interpret this identity in a Tate algebra under certain conditions in the Step 3 of the proof)
\begin{align*}
\Log_{\widetilde{E}'}(\bv z) &=\sum_{n \geq 0} \delta_0   (\Theta'^{(1)} \dots \Theta'^{(n)} \iota\inv(\bv)^{(n)}) z^{n+1}  \\
&=\sum_{n \geq 0} \delta_0   (\Theta'^{(1)} \dots \Theta'^{(n)} \Theta'^{(n+1)} \Phi'^{\top (n+1)} \iota\inv(\bv)^{(n)}) z^{n+1}  \\
&=\sum_{n \geq 0} \delta_0   (\Theta'^{(1)} \dots \Theta'^{(n)} \Theta'^{(n+1)} \iota\inv_\bm(\sigma(\iota_\bw(\bv)))^{(n+1)}) z^{n+1}  \\
&=\sum_{n \geq 0} \delta_0   (\Theta'^{(1)} \dots \Theta'^{(n)} \iota\inv_\bm(\sigma(\iota_\bw(\bv)))^{(n)}) z^{n}.
\end{align*}
Here the second equality comes from the fact that $\Theta':=((\Phi')^{-1})^\top$, and the last one holds since $\delta_0(\sigma(\iota_\bw(\bv)))=0$.

More generally, by similar arguments we show that for $j \in \mathbb N$, 
\begin{equation} \label{eq: shift}
\Log_{\widetilde{E}'}(\bv z^j) =\sum_{n \geq 0} \delta_0   (\Theta'^{(1)} \dots \Theta'^{(n)} \iota\inv_\bm(\sigma^j(\iota_\bw(\bv)))^{(n)}) z^{n}.
\end{equation}

We claim that
\begin{align*}
\Log_{\widetilde{E}'}(\bv_{\calM,z}) &= \sum_{n \geq 0} \delta_0   (\Theta'^{(1)} \dots \Theta'^{(n)} (\bff^\top)^{(n)}) z^n.
\end{align*}
In fact, by \eqref{eq: shift} we obtain
\begin{align*}
\Log_{\widetilde{E}'}(\bv_{M,z}) &= \Log_{\widetilde{E}'} \left(\bv_0 + \bv_1\twist z + \bv_2 \twistk{2} z^2 + \dots
\right). \\
&=\sum_{n \geq 0} \sum_{j \geq 0} \delta_0 \left(\Theta'^{(1)} \dots \Theta'^{(n)} \iota\inv_\bm \left(\sigma^j \left(\iota_\bw(\bv_j^{(j)}) \right) \right)^{(n)} \right) z^{n} \\
&=\sum_{n \geq 0} \delta_0 \left(\Theta'^{(1)} \dots \Theta'^{(n)} \iota\inv_\bm \left(\sum_{j \geq 0} \sigma^j \left(\iota_\bw(\bv_j^{(j)}) \right) \right)^{(n)} \right) z^{n}.
\end{align*}
We analyze now the sum $\sum_{j \geq 0} \sigma^j \left(\iota_\bw(\bv_j^{(j)}) \right)$ viewed as an element of $\calM'$. We have
\begin{align*}
\sum_{j \geq 0} \sigma^j \left(\iota_\bw(\bv_j^{(j)}) \right) &=\sum_{j \geq 0} \sigma^j \left(v_{j,1}^{(j)} w_1+\ldots+v_{j,d}^{(j)} w_d \right) \\
&= \sum_{j \geq 0} (v_{j,1} \sigma^j (w_1)+\ldots+v_{j,d} \sigma^j(w_d) ) \\
&= f_1 m_1 + \dots f_r m_r.
\end{align*}
This implies
	\[ \iota\inv_\bm \left(\sum_{j \geq 0} \sigma^j \left(\iota_\bw(\bv_j^{(j)}) \right)\right)=\iota\inv_\bm(f_1 m_1 + \dots f_r m_r)=\bff^\top \]
and the claim follows immediately.	

\medskip

\noindent {\bf Step 3.} We recall that $\bff=(f_1,\dots,f_r)$ and $\Psi_{r+1}=(\Psi_{r+1,1} \dots,\Psi_{r+1,r})$. Since $\Upsilon'={\Psi'}\inv$, we get 
\begin{equation} \label{eq:Upsilon last row}
\Upsilon^\top_{r+1}=-\Upsilon'^{\top} \Psi_{r+1}^{\top}.
\end{equation}

The equality $\Psi^{(-1)}=\Phi \Psi$ implies
\begin{align*}
\begin{pmatrix}
\Psi'^{(-1)} & 0  \\
\Psi_{r+1}^{(-1)} & 1 
\end{pmatrix}
=\begin{pmatrix}
\Phi' & 0  \\
\bff & 1 
\end{pmatrix}
\begin{pmatrix}
\Psi' & 0  \\
\Psi_{r+1} & 1 
\end{pmatrix}
=\begin{pmatrix}
\Phi' \Psi' & 0  \\
\bff \Psi'+\Psi_{r+1} & 1 
\end{pmatrix}.
\end{align*}
Thus
	\[ \Psi_{r+1}^{(-1)} =\bff \Psi'+\Psi_{r+1}. \]
Note that $\Upsilon'={\Psi'}\inv$. We then get
\begin{equation} \label{eq: vector}
\bff^\top=\Upsilon'^{\top}(\Psi_{r+1}^{\top (-1)}-\Psi_{r+1}^{\top}).
\end{equation}

Next, since $\Psi'^{(-1)}=\Phi' \Psi'$, we deduce
	\[ \Theta' \Upsilon'^\top=(\Phi'^{-1})^\top \Upsilon'^{\top}=\Upsilon'^{\top (-1)}. \]
Thus for $n \geq 1$, we have
\begin{equation} \label{eq: log coef}
\Theta'^{(1)} \dots \Theta'^{(n)} \Upsilon'^{\top (n)}=\Upsilon'^{\top}.
\end{equation}

Combining Equations \eqref{eq: vector} and \eqref{eq: log coef}, we get
\begin{align*}
\Theta'^{(1)} \dots \Theta'^{(n)} (\bff^\top)^{(n)} &=\Theta'^{(1)} \dots \Theta'^{(n)} \Upsilon'^{\top (n)}(\Psi_{r+1}^{\top (n-1)}-\Psi_{r+1}^{\top (n)}) \\
&=\Upsilon'^{\top} (\Psi_{r+1}^{\top (n-1)}-\Psi_{r+1}^{\top (n)}) \\
&=\Upsilon'^{\top} F\inv (F\Psi_{r+1}^{\top (n-1)}-F\Psi_{r+1}^{\top (n)}).
\end{align*}
Thus
\begin{align} \label{eq: Stark}
\Log_{\widetilde{E}'}(\bv_{\calM,z}) &= \sum_{n\geq 0} \left (P_n \bv_0\twistk{n}z^n +P_n \bv_1\twistk{n+1} z^{n+1}+P_n \bv_2\twistk{n+2} z^{n+2} + \dots \right )\\
&= \sum_{n\geq 0} \left (P_n \bv_0\twistk{n} +P_{n-1} \bv_1\twistk{n} +P_{n-2} \bv_2\twistk{n}  + \dots \right )z^n\nonumber\\
&= \delta_0   (\bff^\top)+\sum_{n \geq 1} \delta_0   (\Theta'^{(1)} \dots \Theta'^{(n)} (\bff^\top)^{(n)}) z^n \notag \\
&= \delta_0   (\bff^\top)+\sum_{n \geq 1} \delta_0   (\Upsilon'^{\top} F\inv (F\Psi_{r+1}^{\top (n-1)}-F\Psi_{r+1}^{\top (n)})) z^n \notag
\end{align}
where in the second line we consider $P_{i-k}= 0$ if $k>i$.  Since $F \Psi_{r+1}^{\top (k)}$ converges to $0$, it follows that $\Log_{\widetilde{E}'}(\bv_{M,z}) \in \bT_z(\overline K)^d$. 

By evaluating Equation \eqref{eq: Stark} at $z=1$, we obtain a telescoping series on the right-hand side and get
\begin{align} \label{eq:Stark eq}
\Log_{\widetilde{E}'}(\bv_{\calM,z}) \big|_{z=1} &=\delta_0   (\bff^\top)+\sum_{n \geq 1} \delta_0   (\Upsilon'^{\top} (\Psi_{r+1}^{\top (n-1)}-\Psi_{r+1}^{\top (n)}))  \\
&=\delta_0   (\bff^\top)+\delta_0   (\Upsilon'^{\top} \Psi_{r+1}^{\top}) \notag \\
&=\delta_0   (\bff^\top-\Upsilon_{r+1}^\top). \notag
\end{align}
Here the last line holds by \eqref{eq:Upsilon last row}.

We conclude that $\delta_0   (\bff^\top-\Upsilon_{r+1}^\top)$ is a Stark unit for $E'$ and get a Stark logarithmic identity
	\[ \Log^{\text{St}}_{E'}(\bv_{\calM})=\delta_0   (\bff^\top-\Upsilon_{r+1}^\top) \]
which finishes Part (a).  

\subsection{Proof of the main theorem: Part (b)} ${}$\par

In this section we prove Theorem \ref{theorem: main theorem}, Part (b). 

By \eqref{eq:Stark eq} we write
\begin{align*}
\bv_{\calM,z} = \delta_{1,z}   (\bff^\top) &= \delta_{1,z}\left(\sum_i t^{n_i} \sigma^{\ell_i} \left( \sum_{j=1}^d u_{i,j} w_j \right)\right )\\
&=\sum_i \widetilde{E}'_{\theta^{n_i}}  \delta_{1,z} \left(\sigma^{\ell_i} \left( \sum_{j=1}^d u_{i,j} w_j \right)\right).
\end{align*}
Here the last equality follows from the construction of $t$-modules associated to dual $t$-motives as explained in \S \ref{sec: dual motives}. 

We then get the following equality between formal series in $z$:
\begin{align*} 
\Log_{\widetilde{E}'}(\bv_{\calM,z}) &= \sum_i d[\theta^{n_i}] \Log_{\widetilde{E}'}\left(\delta_{1,z} \left(\sigma^{\ell_i} \left( \sum_{j=1}^d u_{i,j} w_j \right)\right) \right) \\
&= \sum_i d[\theta^{n_i}] \Log_{\widetilde{E}'}( (u_{i,1},\ldots,u_{i,d})^\top z^{\ell_i}) \\
&= \sum_i d[\theta^{n_i}] z^{\ell_i} \Log_{\widetilde{E}'}(\bu_i).
\end{align*}
Since all $\bu_i$ are in the domain of convergence of $\Log_{E'}$, Part (a) implies that the above equality holds in the Tate algebra $\bT_z(\bC_\infty)$.

By Part (a) we apply the evaluation map $\ev$ to obtain
\begin{align*}
\delta_0   (\bff^\top-\Upsilon_{r+1}^\top) &= \Log_{\widetilde{E}'}(\bv_{\calM,z}) \big|_{z=1} \\
&=\sum_i d[\theta^{n_i}] z^{\ell_i} \Log_{\widetilde{E}'}(\bu_i) \bigg|_{z=1} \\
&=\sum_i d[\theta^{n_i}] \Log_{E'}(\bu_i), 
\end{align*}
and finishes the proof of Part (b).

\subsection{Relations with Anderson's analytic theory of $A$-finite $t$-modules} ${}$\par

In this section we will apply the elaborate analytic theory of $A$-finite $t$-modules developed by Anderson (see \cite[\S 5.3]{HJ20}) to obtain a result which is similar to Theorem \ref{theorem: main theorem}. A similar analysis appeared in \cite[\S 3.4]{GND20}, which was the starting point of this paper.  

\begin{theorem} \label{theorem: Anderson}
We keep the above notation. Then $\delta_0   (\bff^\top-\Upsilon_{r+1}^\top)$ is a unit for $E'$. Further, we have 
	\[ \Exp_{E'}(\delta_0   (\bff^\top-\Upsilon_{r+1}^\top)) = \bv_{\calM}. \]
\end{theorem}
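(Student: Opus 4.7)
The plan is to invoke Anderson's analytic theory of $A$-finite $t$-modules developed in \cite[\S 5.3]{HJ20}, following the template of \cite[\S 3.4]{GND20}. Compared with Theorem~\ref{theorem: main theorem}(a), which requires the Tate-algebra convergence hypothesis $F \Psi_{r+1}^{(k)} \to 0$ to upgrade the identity to a Stark unit, Anderson's theory delivers the (weaker) log-algebraic identity directly at the level of $\Exp_{E'}$: for $\tilde m \in \widetilde{\calM'} := \calM' \otimes_{\overline K[t]} \bT$ with $(\sigma - 1)\tilde m$ lying in the algebraic submodule $\calM'$, the extensions of $\delta_0$ and $\delta_1$ fit into a commutative diagram with $\Exp_{E'}$, independent of any convergence in $\bT_z(\C_\infty)$.

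The first step is to recover the key algebraic identity already implicit in the proof of Theorem~\ref{theorem: main theorem}(a). Expanding the block form of $\Psi^{(-1)} = \Phi \Psi$ yields $\Psi_{r+1}^{(-1)} = \bff \Psi' + \Psi_{r+1}$, and hence
$$\bff^\top = \Upsilon'^\top \bigl(\Psi_{r+1}^{\top(-1)} - \Psi_{r+1}^\top\bigr).$$
Recall that $\sigma$ acts on $\bT^r \cong \widetilde{\calM'}$ (in the $\bm$-basis) by $\sigma \bv = \Phi'^\top \bv^{(-1)}$. Combining this with the transposed identity $\Phi'^\top \Upsilon'^{\top(-1)} = \Upsilon'^\top$, which follows from $\Upsilon'^{(-1)} \Phi' = \Upsilon'$, a short computation gives the key algebraic identity
$$(\sigma - 1)\, \Upsilon_{r+1}^\top = -\bff^\top \quad \text{in } \widetilde{\calM'}.$$

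The second step is to apply Anderson's theorem to the element $\bff^\top - \Upsilon_{r+1}^\top \in \widetilde{\calM'}$. The formal logarithmic computation of Steps~2 and~3 in the proof of Theorem~\ref{theorem: main theorem}(a) shows that $\Log_{\widetilde E'}(\bv_{\calM, z})$, when formally evaluated at $z = 1$, telescopes to $\delta_0(\bff^\top - \Upsilon_{r+1}^\top) \in \Lie_{E'}(\C_\infty)$. Since $\Exp_{E'}$ is everywhere convergent on $\C_\infty^d$ and is the formal inverse of $\Log_{E'}$, Anderson's commutative diagram produces
$$\Exp_{E'}\bigl(\delta_0(\bff^\top - \Upsilon_{r+1}^\top)\bigr) = \delta_1(\bff^\top) = \bv_{\calM},$$
which is the desired log-algebraic identity; since $\bv_{\calM} \in E'(\overline K)$, the point $\delta_0(\bff^\top - \Upsilon_{r+1}^\top)$ is a unit for $E'$ in the sense of Taelman.

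The main obstacle is packaging the formal identity from the proof of Theorem~\ref{theorem: main theorem}(a) into a genuine statement over $\C_\infty^d$ without any Tate-algebra convergence, and for this one must appeal to the full strength of Anderson's analytic theory. Once this is done, the log-algebraic identity follows essentially formally from the algebraic relation $(\sigma - 1)\Upsilon_{r+1}^\top = -\bff^\top$; the Stark unit refinement of Theorem~\ref{theorem: main theorem} is however lost, since the preimage of $\bv_{\calM}$ cannot in general be lifted to an element of $\Lie_{\widetilde E'}(\bT_z(\C_\infty))$ without the extra convergence hypothesis on $F \Psi_{r+1}^{(k)}$. This conceptually explains why Stark units strictly refine Taelman's units, and why Theorem~\ref{theorem: Anderson} is the weaker by-product of the main result obtained through Anderson's classical machinery.
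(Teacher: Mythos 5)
Your proposal is correct and takes essentially the same approach as the paper: both establish the $\sigma$-difference relation $\Phi'^\top\bigl(\Upsilon_{r+1}^\top\bigr)^{(-1)} + \bff^\top = \Upsilon_{r+1}^\top$ (equivalently, $(\sigma-1)\bigl(-\Upsilon_{r+1}^\top\bigr) = \bff^\top$) from $\Psi^{(-1)} = \Phi\Psi$, then invoke Anderson's analytic theory of $A$-finite $t$-modules (\cite[Cor.\ 5.20, 5.21]{HJ20}) with $\bv = -\Upsilon_{r+1}^\top$ and $\bz = \bff^\top$ to obtain $\Exp_{E'}(\delta_0(\bv + \bz)) = \delta_1(\bz) = \bv_{\calM}$. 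The only cosmetic difference is that the paper reads the key relation directly off the last row of $\Phi^\top\bigl((\Psi^{-1})^\top\bigr)^{(-1)} = (\Psi^{-1})^\top$ and then quotes Anderson's exponential identity without any detour through the formal logarithm telescoping, which is superfluous once Anderson's result is invoked.
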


\begin{remark}
We give some comments to compare Theorems \ref{theorem: main theorem} and \ref{theorem: Anderson}.

\noindent 1) In Theorem \ref{theorem: Anderson} we do not require any restrictions. Consequently, we can only conclude that $\delta_0   (\bff^\top-\Upsilon_{r+1}^\top)$ is a unit, which is much weaker than showing it is a Stark unit as is done in Theorem \ref{theorem: main theorem} (see Remark \ref{rem:units vs Stark units}). Roughly speaking, Theorem \ref{theorem: Anderson} allows us to use the machinery of Stark units and to bypass the convergence issue of logarithm series. This point of view turns out to be very powerful and has already led to several arithmetic applications (for example, compare \cite{ANDTR20a} to \cite{AT90,Pap}, also \cite{ANDTR17a} to \cite{And94,Tae12a}).  

\noindent 2) In addition, we mention again that the proof of Theorem \ref{theorem: Anderson} makes use of Anderson's analytic theory of $A$-finite $t$-modules which is much more complicated than the ingredients given in the proof of Theorem \ref{theorem: main theorem}.
\end{remark}

\begin{proof}[Proof of Theorem \ref{theorem: Anderson}] 
Since $\Psi'^{(-1)}=\Phi' \Psi'$, we have
	\[ \Phi'^\top (({\Psi'}\inv)^\top)\twistinv = ({\Psi'}\inv)^\top.\]
Similarly, since $\Psi^{(-1)}=\Phi \Psi$, we have
	\[\Phi^\top (({\Psi}\inv)^\top)\twistinv = ({\Psi}\inv)^\top.\]
It follows that
	\[ \Phi'^\top (\Upsilon_{r+1}^\top)^{(-1)} + \bff^\top=\Upsilon_{r+1}^\top. \]

Recall that by Anderson's analytic theory of $A$-finite $t$-modules (see \cite[Corollaries 5.20 and 5.21]{HJ20}), if $\bv \in\TT^r$ and $\bz \in \overline K[t]^r$ satisfy
\begin{equation*} 
\Phi'^\top\bv\twistinv - \bv = \bz,
\end{equation*}
then
	\[\Exp_{E'}(\delta_0   (\bv+\bz)) = \delta_1   (\bz) .\]
We apply the above result for $\bv=-\Upsilon_{r+1}^\top$ and $\bz=\bff^\top$ to obtain
	\[\Exp_{E'}(\delta_0   (-\Upsilon_{r+1}^\top+\bff^\top)) = \delta_1   (\bff^\top)= \bv_{\calM} \]
as required.
\end{proof}	


\section{Application to the Anderson-Thakur dual $t$-motives}\label{S:Log for And Thak Models}

\subsection{Some history} ${}$\par

We investigate the Anderson-Thakur dual $t$-motives which were first introduced by Anderson and Thakur in \cite{AT09}.  Shortly thereafter, Chang \cite{Cha14} studied the Anderson-Thakur dual $t$-motives in a general setting and proved many fundamental properties and results. In \cite{CPY19} Chang, Papanikolas and Yu revisited the dual $t$-motives connected to multiple zeta values. They introduced the associated $t$-modules and the corresponding special points and gave an effective criterion for Eulerian MZV's in positive characteristic. Further, Chang, Mishiba and the first author investigated the dual $t$-motives connected to multiple polylogarithms at algebraic points with important applications to $\infty$-adic and $\nu$-adic multiple zeta values in positive characteristic (see  \cite{Cha16,CGM19,CM19,CM19b}).

In this section we apply our main result to obtain log-algebraic identities for the $t$-modules attached to the Anderson-Thakur dual $t$-motives. Then we generalize Chang's theorem in \cite{Cha16} where he gave elegant logarithmic interpretations for special cases of MZV's. We also recover many previously known results in a straightforward way.

\subsection{Anderson-Thakur dual $t$-motives and periods} \label{SS:AndThakModels} ${}$\par

In what follows, let $\fs=(s_1,\dots,s_r) \in \mathbb N^r$ be a tuple for $r \geq 1$ and $\fQ=(Q_1,\ldots,Q_r) \in \overline K[t]^r$ satisfying the condition
\begin{equation} \label{eq: condition for Q}
\lVert Q_r \rVert < |\theta|_\infty ^{\tfrac{s_r q}{q-1}} \text{ and } \lVert Q_i\rVert \leq |\theta|_\infty ^{\tfrac{s_iq}{q-1}} \quad \text{ for all } 1 \leq i \leq r-1.
\end{equation}
We should mention that this condition, inspired by \cite[Remark 4.1.3]{CM19b}, is slightly stronger than that given in \cite[(2.3.1)]{CPY19}, but is enough for applications to multiple zeta values and Carlitz star multiple polylogarithms.

For $1 \leq \ell \leq r$, we set $d_\ell:=s_\ell+\dots+s_r$ and $d:=d_1+\dots+d_r$. Then the Anderson-Thakur dual $t$-motives $\calM'_{\fs,\fQ}$ and $\calM_{\fs,\fQ}$ attached to $\fs$ and $\fQ$ are given by the matrices 
\begin{align*}
\Phi'_{\fs,\fQ} &=
\begin{pmatrix}
(t-\theta)^{s_1+\dots+s_r} & 0 & \dots & 0 \\
Q_1^{(-1)} (t-\theta)^{s_1+\dots+s_r} & (t-\theta)^{s_2+\dots+s_r} & \dots & 0  \\
0 & Q_2^{(-1)} (t-\theta)^{s_2+\dots+s_r} & & 0 \\
\vdots & & \ddots & \vdots \\
0 & 0 & \dots & (t-\theta)^{s_r}
\end{pmatrix} \in \Mat_r(\overline K[t]), \\
\Phi_{\fs,\fQ} &=
\begin{pmatrix}
(t-\theta)^{s_1+\dots+s_r} & 0 & 0 & \dots & 0 \\
Q_1^{(-1)} (t-\theta)^{s_1+\dots+s_r} & (t-\theta)^{s_2+\dots+s_r} & 0 & \dots & 0 \\
0 & Q_2^{(-1)} (t-\theta)^{s_2+\dots+s_r} & \ddots & & \vdots \\
\vdots & & \ddots & (t-\theta)^{s_r} & 0 \\
0 & \dots & 0 & Q_r^{(-1)} (t-\theta)^{s_r} & 1 
\end{pmatrix} \in \Mat_{r+1}(\overline K[t]).
\end{align*}
From now on, to simplify the notation, we will drop the subscripts $\fs$ and $\fQ$ whenever no confusion results. For example, we will write $\Phi$ instead of $\Phi_{\fs,\fQ}$ and so on.


Throughout this paper, we work with the Carlitz period $\widetilde \pi$ which is a fundamental period of the Carlitz module (see \cite{Gos96, Tha04}). We make a choice of $(q-1)$st root of $(-\theta)$ and set 
	\[ \Omega(t):=(-\theta)^{-q/(q-1)} \prod_{i \geq 1} \left(1-\frac{t}{\theta^{q^i}} \right) \in \bT^\times \]
so that $\Omega^{(-1)}=(t-\theta)\Omega$ and
\begin{equation} \label{eq:value of omega}
\frac{1}{\Omega(\theta)}=\widetilde \pi.
\end{equation}

Given $\fs$ and $\fQ$ satisfying \eqref{eq: condition for Q} as above, Chang introduced the following series (see \cite[Lemma 5.3.1]{Cha14} and also \cite[Equation (2.3.2)]{CPY19}):	
\begin{align} \label{eq: series L}
\mathfrak L_{\fs,\fQ}:=\sum_{i_1 > \dots > i_r \geq 0} (\Omega^{s_r} Q_r)^{(i_r)} \dots (\Omega^{s_1} Q_1)^{(i_1)}. 
\end{align}
We also need the star series
\begin{align} \label{eq: series L star}
\mathfrak L^*_{\fs,\fQ}:=\sum_{i_1 \geq \dots \geq i_r \geq 0} (\Omega^{s_r} Q_r)^{(i_r)} \dots (\Omega^{s_1} Q_1)^{(i_1)}.
\end{align}

If we denote $\mathcal E$ the ring of series $\sum_{n \geq 0} a_nt^n \in \overline K[[t]]$ such that $\lim_{n \to +\infty} \sqrt[n]{|a_n|_\infty}=0$ and $[K_\infty(a_0,a_1,\ldots):K_\infty]<\infty$, then any $f \in \mathcal E$ is an entire function. It is proved that $\mathfrak L_{\fs,\fQ} \in \mathcal E$ (see \cite[Lemma 5.3.1]{Cha14}).

More generally, for $1 \leq \ell < j \leq r+1$, we define the series
\begin{align} \label{eq: series L general}
\mathfrak L(s_\ell,\dots,s_{j-1}) & :=\sum_{i_\ell > \dots > i_{j-1} \geq 0} (\Omega^{s_{j-1}} Q_{j-1})^{(i_{j-1})} \dots (\Omega^{s_\ell} Q_\ell)^{(i_\ell)}, \\ \mathfrak L^*(s_\ell,\dots,s_{j-1}) & :=\sum_{i_\ell \geq \dots \geq i_{j-1} \geq 0} (\Omega^{s_{j-1}} Q_{j-1})^{(i_{j-1})} \dots (\Omega^{s_\ell} Q_\ell)^{(i_\ell)}, \notag
\end{align}
which are the series in \eqref{eq: series L} and \eqref{eq: series L star} attached to $(s_\ell,\dots,s_{j-1})$ and $(Q_\ell,\dots,Q_{j-1})$. We should mention that we omit the subscript $\fQ$ from the definition of the above series to avoid heavy notation.

\begin{lemma}\label{L:Lstarinv}
For $1 \leq \ell \leq j \leq r$, we have 
\begin{align*}
(-1)^\ell \mathfrak L^*(s_j,\dots,s_\ell) = \sum_{k=\ell+1}^j (-1)^{k-1} \mathfrak L(s_\ell,\dots,s_{k-1}) \mathfrak L^*(s_j,\dots,s_k) + (-1)^j \mathfrak L(s_\ell,\dots,s_j).
\end{align*}
and 
\begin{align*}
(-1)^j \mathfrak L^*(s_j,\dots,s_\ell) = \sum_{k=\ell+1}^j (-1)^k \mathfrak L(s_k\dots,s_j) \mathfrak L^*(s_{k-1},\dots,s_\ell) + (-1)^\ell \mathfrak L(s_\ell,\dots,s_j).
\end{align*}
\end{lemma}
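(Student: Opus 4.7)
\medskip

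\noindent\textbf{Proof plan for Lemma \ref{L:Lstarinv}.}

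The plan is to reduce both identities to a single universal telescoping identity in formal variables. Throughout, write $b_k := \Omega^{s_k}Q_k$; these elements all lie in the commutative ring $\bT$, so products commute. Unpacking the definitions in \eqref{eq: series L general}, I would rewrite each series as a sum over a tuple $(m_\ell,m_{\ell+1},\dots,m_j)$ of nonnegative integers with a prescribed pattern of strict/non-strict inequalities between consecutive entries, and the common weight $b_\ell^{(m_\ell)}b_{\ell+1}^{(m_{\ell+1})}\cdots b_j^{(m_j)}$. For instance, after the natural relabeling of summation variables,
\[
\mathfrak L^*(s_j,\dots,s_\ell)=\sum_{m_j\geq m_{j-1}\geq\dots\geq m_\ell\geq 0} b_\ell^{(m_\ell)}\cdots b_j^{(m_j)},
\qquad
\mathfrak L(s_\ell,\dots,s_j)=\sum_{m_\ell> m_{\ell+1}>\dots> m_j\geq 0} b_\ell^{(m_\ell)}\cdots b_j^{(m_j)},
\]
and $\mathfrak L(s_\ell,\dots,s_{k-1})\,\mathfrak L^*(s_j,\dots,s_k)$ runs over the same weighted tuples with the constraints $m_\ell>\dots>m_{k-1}$ and $m_j\geq\dots\geq m_k$ but no relation linking $m_{k-1}$ and $m_k$.

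For each tuple $(m_\ell,\dots,m_j)$ introduce the Boolean $\epsilon_p:=\mathbf{1}[m_p>m_{p+1}]$ for $\ell\leq p\leq j-1$. Each of the summation constraints appearing above corresponds to a product of $\epsilon_p$'s or $(1-\epsilon_p)$'s. Dividing the asserted identity (1) by $(-1)^\ell$ and equating coefficients of the weight $b_\ell^{(m_\ell)}\cdots b_j^{(m_j)}$, the identity reduces to the pointwise polynomial identity
\[
\prod_{p=\ell}^{j-1}(1-\epsilon_p)=\sum_{k=\ell+1}^{j}(-1)^{k-\ell-1}\Bigl(\prod_{p=\ell}^{k-2}\epsilon_p\Bigr)\Bigl(\prod_{p=k}^{j-1}(1-\epsilon_p)\Bigr)+(-1)^{j-\ell}\prod_{p=\ell}^{j-1}\epsilon_p,
\]
valid for arbitrary (not necessarily $\{0,1\}$-valued) $\epsilon_p$.

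I would prove this algebraic identity by a one-line telescoping argument. Writing $n=j-\ell$ and reindexing $x_p=\epsilon_{\ell+p-1}$, set $T_s:=(-1)^{s-1}\prod_{p<s}x_p\prod_{p\geq s}(1-x_p)$ for $1\leq s\leq n+1$, so that $T_1=\prod_{p=1}^n(1-x_p)$ and $T_{n+1}=(-1)^n\prod_{p=1}^n x_p$. A direct computation using $(1-x_s)+x_s=1$ gives
\[
T_s-T_{s+1}=(-1)^{s-1}\Bigl(\prod_{p<s}x_p\Bigr)\Bigl(\prod_{p>s}(1-x_p)\Bigr),
\]
so summing over $s=1,\dots,n$ produces $T_1-T_{n+1}$ on the left and precisely the RHS alternating sum on the right, yielding the claimed identity. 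This settles identity (1).

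Identity (2) should then be deduced from the same telescoping identity by applying the substitution $\epsilon_p\mapsto 1-\epsilon_p$ (equivalently $x_p\mapsto 1-x_p$), which swaps the roles of the ``strict'' and ``non-strict'' inequalities, hence exchanges the roles of $\mathfrak L$ and $\mathfrak L^*$ in each position, and then keeping track of the resulting sign $(-1)^n$ and the reindexing $k\mapsto j+\ell+1-k$ of the sum. I expect the only real obstacle to be bookkeeping: carefully matching the relabeled summation variables of $\mathfrak L^*(s_j,\dots,s_\ell)$ (whose arguments are written in reversed order) with the ordering conventions of \eqref{eq: series L general}, and correctly identifying the product $\mathfrak L(s_\ell,\dots,s_{k-1})\mathfrak L^*(s_j,\dots,s_k)$ as the sum over joint tuples with the $\epsilon_{k-1}$-coordinate unconstrained. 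Once this dictionary is in place, both identities follow immediately from the telescoping step.
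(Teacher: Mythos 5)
Your proof is correct and uses the same inclusion-exclusion approach as the paper, which simply defers to \cite[Lemma 4.2.1]{CM19} and leaves the details to the reader, so your telescoping Boolean identity makes that argument explicit. A minor point: the reindexing $k\mapsto j+\ell+1-k$ you anticipate for identity (2) is unnecessary — substituting $\epsilon_p\mapsto 1-\epsilon_p$ in your telescoping identity, isolating $\mathfrak L^*(s_j,\dots,s_\ell)$, and multiplying through by $(-1)^\ell$ already produces (2) in the stated form.
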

\begin{proof}
The follows similarly to the proof of \cite[4.2.1]{CM19} and is a straightforward exercise in the inclusion/exclusion principal.  We leave the details to the reader.
\end{proof}

The matrix given by	
\begin{align*}
\Psi &=
\begin{pmatrix}
\Omega^{s_1+\dots+s_r} & 0 & 0 & \dots & 0 \\
\mathfrak L(s_1) \Omega^{s_2+\dots+s_r} & \Omega^{s_2+\dots+s_r} & 0 & \dots & 0 \\
\vdots & \mathfrak L(s_2) \Omega^{s_3+\dots+s_r} & \ddots & & \vdots \\
\vdots & & \ddots & \ddots & \vdots \\
\mathfrak L(s_1,\dots,s_{r-1}) \Omega^{s_r}  & \mathfrak L(s_2,\dots,s_{r-1}) \Omega^{s_r} & \dots & \Omega^{s_r}& 0 \\
\mathfrak L(s_1,\dots,s_r)  & \mathfrak L(s_2,\dots,s_r) & \dots & \mathfrak L(s_r) & 1 
\end{pmatrix} \in \text{GL}_{r+1}(\bT)
\end{align*}
satisfies 
	\[ \Psi^{(-1)}=\Phi \Psi. \]
Thus $\Psi$ is a rigid analytic trivialization associated to the dual $t$-motive $\calM$. 

Using Lemma \ref{L:Lstarinv} we see that the periods of $\calM$ are given by the matrix $\Upsilon=\Psi\inv$:
\begin{align*}
\Upsilon =
\begin{pmatrix}
\Omega^{-(s_1+\dots+s_r)} & 0 & 0 & \dots & 0 \\
-\mathfrak L^*(s_1) \Omega^{-(s_1+\dots+s_r)} & \Omega^{-(s_2+\dots+s_r)} & 0 & \dots & 0 \\
\vdots & -\mathfrak L^*(s_2) \Omega^{-(s_2+\dots+s_r)} & \ddots & & \vdots \\
\vdots & & \ddots & \ddots & \vdots \\
(-1)^{r-1} \mathfrak L^*(s_{r-1},\dots,s_1) \Omega^{-(s_1+\dots+s_r)} & (-1)^{r-2} \mathfrak L^*(s_{r-1},\dots,s_2) \Omega^{-(s_2+\dots+s_r)} & \dots & \Omega^{-s_r}& 0 \\
(-1)^r \mathfrak L^*(s_r,\dots,s_1) \Omega^{-(s_1+\dots+s_r)} & (-1)^{r-1} \mathfrak L^*(s_r,\dots,s_2) \Omega^{-(s_2+\dots+s_r)} & \dots & -\mathfrak L^*(s_r) \Omega^{-s_r} & 1 
\end{pmatrix}.
\end{align*}
Note that $\Upsilon \in \text{GL}_{r+1}(\bT)$.

\begin{lemma} \label{lemma: equality star}
For $1 \leq \ell \leq j \leq r$, we have
\begin{align*}
& \mathfrak L^*(s_{j-1},\dots,s_\ell)^{(-1)} \\
&=\mathfrak L^*(s_{j-1},\dots,s_\ell)+\mathfrak L^*(s_{j-1},\dots,s_{\ell+1})Q_\ell^{(-1)} [(t-\theta) \Omega]^{s_\ell}+\dots+Q_\ell^{(-1)} \dots Q_{j-1}^{(-1)} [(t-\theta) \Omega]^{(s_\ell+\dots+s_{j-1})}.
\end{align*}
\end{lemma}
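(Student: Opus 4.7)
The plan is to unfold $\mathfrak L^*(s_{j-1},\dots,s_\ell)$ directly from its series definition, apply the Frobenius $(-1)$-twist termwise, and then reorganize the resulting sum by peeling off those terms whose shifted twist indices become $-1$. Using the convention of \eqref{eq: series L star} and \eqref{eq: series L general} applied to the reversed argument list, one has
\[
\mathfrak L^*(s_{j-1},\dots,s_\ell)=\sum_{\substack{n_\ell\leq n_{\ell+1}\leq\dots\leq n_{j-1}\\ n_\ell\geq 0}}\prod_{k=\ell}^{j-1}(\Omega^{s_k}Q_k)^{(n_k)},
\]
which lies in $\TT$ by the hypothesis \eqref{eq: condition for Q} on $\fQ$. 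Applying $^{(-1)}$ shifts every twist exponent by $-1$; after the substitution $m_k=n_k-1$, the series $\mathfrak L^*(s_{j-1},\dots,s_\ell)^{(-1)}$ becomes $\sum\prod_{k=\ell}^{j-1}(\Omega^{s_k}Q_k)^{(m_k)}$ over the same monotonicity constraint $m_\ell\leq\dots\leq m_{j-1}$ but with the relaxed lower bound $m_\ell\geq -1$.

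Next I would stratify this enlarged sum according to the integer $p\in\{0,1,\dots,j-\ell\}$ defined by $m_\ell=\dots=m_{\ell+p-1}=-1$, with either $p=j-\ell$ or $m_{\ell+p}\geq 0$. On the stratum indexed by $p$, the first $p$ factors collapse via the functional equation $\Omega^{(-1)}=(t-\theta)\Omega$ to
\[
[(t-\theta)\Omega]^{s_\ell+\dots+s_{\ell+p-1}}\,Q_\ell^{(-1)}\cdots Q_{\ell+p-1}^{(-1)},
\]
while the residual tail sum over $m_{\ell+p}\leq\dots\leq m_{j-1}$ with $m_{\ell+p}\geq 0$ recomposes, by the same series definition, into $\mathfrak L^*(s_{j-1},\dots,s_{\ell+p})$, with the understanding that an empty argument list (occurring at $p=j-\ell$) contributes the factor $1$.

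Summing the strata over $p=0,1,\dots,j-\ell$ recovers precisely the right-hand side of the stated identity. I anticipate no serious obstacle here: this is essentially bookkeeping with a geometric-like sum, resting only on the functional equation $\Omega^{(-1)}=(t-\theta)\Omega$ and absolute convergence in $\TT$, which justifies the rearrangement and stratification and is guaranteed by \eqref{eq: condition for Q}. The only mild subtlety to watch is matching the monotonicity convention for the reversed argument list $\mathfrak L^*(s_{j-1},\dots,s_\ell)$ to that of \eqref{eq: series L general}; once this is fixed, the computation is mechanical.
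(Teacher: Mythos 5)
Your argument is correct but takes a genuinely different route from the paper's. The paper derives the identity from the matrix relation $\Upsilon^{(-1)}=\Upsilon\Phi^{-1}$ (obtained by inverting $\Psi^{(-1)}=\Phi\Psi$), together with the closed-form expression for $\Upsilon=\Psi^{-1}$ supplied by Lemma~\ref{L:Lstarinv}; the claimed equality is then read off as the $(j,\ell)$-entry of that matrix identity. Your proof bypasses both the matrix functional equation and the inclusion--exclusion inversion behind Lemma~\ref{L:Lstarinv}: you unfold the series defining $\mathfrak L^*(s_{j-1},\dots,s_\ell)$ directly from \eqref{eq: series L general}, apply the inverse twist termwise, shift indices to $m_k=n_k-1$, and stratify the index set $\{m_\ell\leq\dots\leq m_{j-1},\ m_\ell\geq -1\}$ by the number $p$ of leading indices equal to $-1$, collapsing those $p$ factors via the single relation $\Omega^{(-1)}=(t-\theta)\Omega$ and recognizing the residual tail as $\mathfrak L^*(s_{j-1},\dots,s_{\ell+p})$. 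The interchange of the twist with the infinite sum is harmless, since the inverse Frobenius twist is a continuous $\mathbb F_q$-algebra automorphism of $\TT$ for the Gauss norm and the relevant series converge there by \eqref{eq: condition for Q}. Your approach is thus self-contained and more elementary, exposing the lemma as nothing more than bookkeeping over $\Omega^{(-1)}=(t-\theta)\Omega$, whereas the paper's route is terser once the machinery of $\Psi$, $\Upsilon$, and Lemma~\ref{L:Lstarinv} is already established.
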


\begin{proof}
Since $\Psi^{(-1)}=\Phi \Psi$, we get $\Upsilon^{(-1)}=\Upsilon \Phi^{-1}$. Using the above formulas we deduce the required equality by direct calculations. We leave the details to the reader.
\end{proof}

\subsection{The associated $t$-modules and special points} \label{SS:AndThak_modules} ${}$\par

We now present the associated $t$-modules and special points inspired by the work of Chang, Papanikolas and Yu \cite{CPY19}. Let $\bm=\{m_1,\dots,m_r\}$ be the $\overline K[t]$-basis of $\calM'$ with respect to the action of $\sigma$ represented by $\Phi'$. It is not hard to check that $\calM'$ is a free left $\overline K[\sigma]$-module of rank $d=(s_1+\dots+s_r)+(s_2+\dots+s_r)+ \dots + s_r$ and that 
\begin{equation}\label{D:Ksigmabasis}
\bw:=\{w_1,\ldots,w_d\}:=\{(t-\theta)^{s_1+\dots+s_r-1} m_1,\dots,m_1,\dots,(t-\theta)^{s_r-1} m_r,\dots,m_r \} 
\end{equation}
is a $\overline K[\sigma]$-basis of $\calM'$. We further observe that $(t-\theta)^\ell \calM'/\sigma \calM' =(0)$ for $\ell \gg 0$.

For such $\calM'$, we recall that we can identify $\calM'/(\sigma-1)\calM'$ with the direct sum of $d$ copies of $\overline K$ as follows. Fixing a $\overline K[\sigma]$-basis $\bw=\{w_1,\dots,w_d\}$ of $\calM'$ given as above, we can express any $m \in \calM'$ as
	\[ m=\sum_{i=1}^d u_i w_i, \quad u_i \in \overline K[\sigma], \]
and then can write down $\delta_1:\calM' \to \Mat_{d \times 1}(\overline K)$ from \S \ref{sec: dual motives} by
	\[ \delta_1(m):=(\delta(u_1),\dots,\delta(u_d))^\top=\begin{pmatrix}
\delta(u_1) \\
\vdots \\
\delta(u_d)
\end{pmatrix} \]
where 
	\[ \delta\left (\sum_i c_i \sigma^i\right ) = \sum_i c_i^{q^i}. \]
It follows that $\delta_1$ is a map of $\Fq$-vector spaces with kernel $(\sigma-1) \calM'$. We note that if $(b_1,\dots,b_d)^\top \in \Mat_{d \times 1}(\overline K)$, then there is a natural lift to $\calM'$, since 
	\[ \delta_1(b_1 w_1+\dots+b_d w_d)=(b_1,\dots,b_d)^\top. \]
We denote by $E'$ the Anderson $t$-module defined over $\overline K$ with $E'(\overline K)$ identified with $\Mat_{d \times 1}(\overline K)$ on which the $\Fq[t]$-module structure given by 
	\[ E': \Fq[\theta] \to \Mat_d(\overline K)[\tau] \]
so that 
	\[ \delta_1(t(b_1 w_1+\dots+b_d w_d))=E'_\theta(b_1,\dots,b_d)^\top=E'_\theta \begin{pmatrix}
b_1 \\
\vdots \\
b_d
\end{pmatrix}. \]
Then $E'$ is the $t$-module associated the dual $t$-motive $\calM'$ as explained in \S \ref{sec: dual motives}. 

We can write down explicitly the map $\delta_0:\calM' \to \Mat_{d \times 1}(\overline K)$. Let $m \in \calM'=\overline K[t]m_1+\dots+\overline K[t]m_r$. Then we can write
	\[ m=\sum_{\ell=1}^r (c_{d_\ell-1,\ell} (t-\theta)^{d_\ell-1}+\dots+c_{0,\ell}+F_\ell(t) (t-\theta)^{d_\ell}) m_\ell, \]
with $c_{i,\ell} \in \overline K$ and $F_\ell(t) \in \overline K[t]$. Then
\begin{equation} \label{D:delta0ATmodel}
\delta_0(m):=(c_{d_1-1,1},\dots,c_{0,1},\dots,c_{d_r-1,r},\dots,c_{0,r})^\top. 
\end{equation}

Inspired by Chang-Papanikolas-Yu (see \cite[\S 5.3]{CPY19}) we define the point 
\begin{equation} \label{eq: CPY point}
\bv_{\fs,\fQ}:=\bv_{\calM} := \delta_1(Q_r^{(-1)}(t-\theta)^{s_r} m_r) \in E'(\overline K).
\end{equation}

\subsection{Logarithm series} ${}$\par

The coefficients of the logarithm series can be calculated following \cite{ANDTR20a}. In this particular case, it was also done in \cite[\S 4.2]{CGM19}.

We set
\begin{align*}
\Theta=(\Phi^{-1})^\top &=
\begin{pmatrix}
\Theta_{1,1} & \dots & \Theta_{1,r+1}  \\
& \ddots & \vdots \\
& & \Theta_{r+1,r+1}
\end{pmatrix} \in \Mat_{r+1}(\overline K(t))
\end{align*}
where for $1 \leq i \leq j \leq r+1$, 
	\[ \Theta_{i,j}=(-1)^{j-i} \frac{\prod_{i \leq k <j} Q_k^{(-1)}}{(t-\theta)^{s_j+\dots+s_r}}. \]
We set
\begin{align*}
\Theta':=((\Phi')^{-1})^\top &=
\begin{pmatrix}
\Theta_{1,1} & \dots & \Theta_{1,r}  \\
& \ddots & \vdots \\
& & \Theta_{r,r}
\end{pmatrix} \in \Mat_r(\overline K(t)).
\end{align*}
If we write 
	\[ \Log_{E'}=\sum_{n \geq 0} P_n \tau^n, \]
then by \cite[Proposition 2.2]{ANDTR20a}, for $n \geq 0$, the $n$th coefficient of the logarithm series of $E'$ evaluated at $\bv \in \overline K^d$ is given by 	
	\[ P_n \bv\twistk{n} = \delta_0(\Theta'^{(1)} \dots \Theta'^{(n)} \iota\inv (\bv)\twistk{n}). \]

\subsection{Log-algebraic identities for $t$-modules associated to Anderson-Thakur dual $t$-motives} ${}$\par

In this section we apply Theorem \ref{theorem: main theorem} to obtain log-algebraic identities for the $t$-module $E'$ associated to the Anderson-Thakur dual $t$-motive $\calM'$.

\begin{theorem} \label{theorem: AT model}
We have a split-logarithmic identity
\begin{align*}
\Log_{E'}^!(\bv_{\fs,\fQ})
=\delta_0 \begin{pmatrix}
(-1)^{r-1} \mathfrak L^*(s_r,\dots,s_1) \Omega^{-(s_1+\dots+s_r)} \\
(-1)^{r-2} \mathfrak L^*(s_r,\dots,s_2) \Omega^{-(s_2+\dots+s_r)} \\
\vdots \\
\mathfrak L^*(s_r) \Omega^{-s_r} 
\end{pmatrix}.
\end{align*}
\end{theorem}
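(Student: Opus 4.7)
The proof strategy is to apply Theorem \ref{theorem: main theorem} (Part (b)) directly to the Anderson-Thakur setup of \S\ref{SS:AndThakModels}. Reading off the block structure of $\Phi_{\fs,\fQ}$, the vector controlling the extension class is
\[\bff = \bigl(0, \ldots, 0, Q_r^{(-1)}(t-\theta)^{s_r}\bigr) \in \Mat_{1\times r}(\overline K[t]).\]
Because its only nonzero component is divisible by $(t-\theta)^{s_r} = (t-\theta)^{d_r}$, while $\delta_0$ (see \eqref{D:delta0ATmodel}) only records coefficients of $(t-\theta)^j$ for $0 \leq j \leq d_r-1$ in the $m_r$-slot, we obtain immediately $\delta_0(\bff^\top) = 0$. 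Hence the conclusion of the Main Theorem simplifies to $\Log^{\text{St}}_{E'}(\bv_{\calM}) = \delta_0(-\Upsilon_{r+1}^\top)$.

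Next I would verify the convergence hypothesis. Since the matrix $\Psi$ displayed in \S\ref{SS:AndThakModels} already lies in $\text{GL}_{r+1}(\bT)$, one may take $F = 1$. The requirement $\Psi_{r+1}^{(k)} \to 0$ in $\Mat_{1\times r}(\bT)$ amounts to $\mathfrak L(s_j,\ldots,s_r)^{(k)} \to 0$ for each $j$; this follows from the fact that each $\mathfrak L(s_j,\ldots,s_r)$ lies in the entire ring $\mathcal E$ by \cite[Lemma 5.3.1]{Cha14}, so its Frobenius twists have Gauss norms on the closed unit disc tending to zero. With all hypotheses checked, Part (a) of the Main Theorem yields the Stark-logarithmic identity, and the explicit expression for $\Upsilon = \Psi^{-1}$ in \S\ref{SS:AndThakModels} (whose last row is computed via Lemma \ref{L:Lstarinv}) produces the claimed right-hand side once its first $r$ entries are negated: the $\ell$th entry reads $(-1)^{r-\ell+1}\mathfrak L^*(s_r,\ldots,s_\ell)\Omega^{-(s_\ell + \cdots + s_r)}$.

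To upgrade to the split-logarithmic identity via Part (b), one must exhibit a decomposition of $\alpha(\calM) = Q_r^{(-1)}(t-\theta)^{s_r} m_r$ into the form $\sum_i t^{n_i}\sigma^{\ell_i}(\sum_j u_{i,j} w_j)$. Exploiting $\sigma(m_r) = (t-\theta)^{s_r} m_r$ (read from the last row of $\Phi'$) together with the $\sigma$-semilinearity $\sigma(cm) = c^{(-1)}\sigma(m)$, and writing $Q_r = \sum_k q_k t^k$ with $q_k \in \overline K$, we rewrite
\[\alpha(\calM) = \sum_k t^k \, \sigma(q_k w_d),\]
where $w_d = m_r$ is the final element of the $\overline K[\sigma]$-basis \eqref{D:Ksigmabasis}. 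This realizes the required decomposition with $n_k = k$, $\ell_k = 1$, and $\bu_k$ supported only in its last coordinate, which equals $q_k$. The main obstacle --- and the only content beyond a mechanical invocation of Theorem \ref{theorem: main theorem} --- is verifying that these $\bu_k$ lie within the convergence domain of $\Log_{E'}$. This is precisely the role of the strict inequality $\lVert Q_r\rVert < |\theta|_\infty^{s_rq/(q-1)}$ in \eqref{eq: condition for Q}, which provides exactly the Gauss-norm bound required for $\Log_{E'}$ to converge at vectors supported in the $w_d$-coordinate; once this is checked, Part (b) of the Main Theorem closes the argument.
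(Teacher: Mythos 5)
Your Part (a) argument is fine, but Part (b) contains a genuine error. You claim that $\sigma(m_r) = (t-\theta)^{s_r} m_r$ "read from the last row of $\Phi'$". That is only true for $r=1$. For $r \geq 2$ the last row of $\Phi'_{\fs,\fQ}$ has a nonzero subdiagonal entry, so
\[\sigma m_r = Q_{r-1}^{(-1)}(t-\theta)^{s_{r-1}+s_r} m_{r-1} + (t-\theta)^{s_r} m_r,\]
and consequently your proposed decomposition gives
\[\sum_k t^k\,\sigma(q_k m_r) = Q_r^{(-1)}\sigma(m_r) = Q_r^{(-1)}Q_{r-1}^{(-1)}(t-\theta)^{s_{r-1}+s_r}m_{r-1} + Q_r^{(-1)}(t-\theta)^{s_r}m_r,\]
which is not $\alpha(\calM) = Q_r^{(-1)}(t-\theta)^{s_r}m_r$. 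The extra $m_{r-1}$ term does not vanish in general. To fix this one must telescope the relations $\sigma m_\ell = Q_{\ell-1}^{(-1)}(t-\theta)^{s_{\ell-1}+\cdots+s_r}m_{\ell-1} + (t-\theta)^{s_\ell+\cdots+s_r}m_\ell$ all the way back to $m_1$, yielding
\[Q_r^{(-1)}(t-\theta)^{s_r}m_r = \sum_{\ell=1}^r (-1)^{r-\ell} Q_r^{(-1)}\cdots Q_\ell^{(-1)}\,\sigma m_\ell,\]
and then expand each product $Q_r\cdots Q_\ell$ monomial-by-monomial in $t$ to reach the form $\sum_i t^{n_i}\sigma^{\ell_i}(\sum_j u_{i,j}w_j)$ required by Part (b). This has two knock-on consequences you cannot avoid: the $\bu_i$ are no longer supported only in the final coordinate (they involve the $m_\ell$-blocks for all $\ell$), and the convergence check therefore needs the full strength of \eqref{eq: condition for Q}, i.e.\ the bounds $\lVert Q_i\rVert \leq |\theta|_\infty^{s_i q/(q-1)}$ for $i < r$ as well as the strict bound for $Q_r$, fed into \cite[Lemma 4.2.1]{CGM19}. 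Incidentally, the single-term decomposition you wrote down \emph{is} what holds for the star dual $t$-motive of \S\ref{S:Log Int for MZVs}, where one gets $\alpha(\calM^*) = -\sigma(Q_r m_r)$ by design; you may be conflating that construction with the Anderson--Thakur one here.
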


\begin{proof}
Note that 
\begin{align*}
\bff &=(0,\dots,0,Q_r^{(-1)} (t-\theta)^{s_r}), \\
\Psi_{r+1} &=(\mathfrak L(s_1,\dots,s_r),\mathfrak L(s_2,\dots,s_r),\dots,\mathfrak L(s_r)).
\end{align*}
So $\Psi_{r+1}\twistk{k} \to 0$ as $k\to \infty$ by \cite[Lemma 5.3.1]{Cha14}. Recall that $\bv_{\fs,\fQ}=\delta_1(Q_r^{(-1)} (t-\theta)^{s_r} m_r)$, and we need to express $Q_r^{(-1)} (t-\theta)^{s_r} m_r$ in the $\overline K[\sigma]$-basis
	\[ \bw=\{(t-\theta)^{s_1+\dots+s_r-1} m_1,\dots,(t-\theta) m_1, m_1,\dots,(t-\theta)^{s_r-1} m_r,\dots,(t-\theta) m_r,m_r \} \] 
of $\calM'$ (see \eqref{D:Ksigmabasis}). By definition we have $\sigma m_1=(t-\theta)^{s_1+\dots+s_r} m_1$ and for $1 < \ell \leq r$, 
\[\sigma m_\ell=Q_{\ell-1}^{(-1)} (t-\theta)^{s_{\ell-1}+\dots+s_r} m_{\ell-1}+ (t-\theta)^{s_\ell+\dots+s_r} m_\ell.\]
It follows that
\begin{align*}
Q_r^{(-1)} (t-\theta)^{s_r} m_r &= Q_r^{(-1)}(\sigma m_r-Q_{r-1}^{(-1)} (t-\theta)^{s_{r-1}+s_r} m_{r-1}) \\
&= Q_r^{(-1)} \sigma m_r - Q_r^{(-1)} Q_{r-1}^{(-1)}(\sigma m_{r-1} - Q_{r-2}^{(-1)} (t-\theta)^{s_{r-2}+\dots+s_r} m_{r-2}) \\
&= \dots \\
&= \sum_{\ell=1}^r (-1)^{r-\ell} Q_r^{(-1)} \dots Q_\ell^{(-1)} \sigma m_\ell.
\end{align*}
Then we write
\begin{align*}
Q_r\cdots Q_\ell &= (b_{r,0} + b_{r,1}t + \dots + b_{r,m_r}t^{m_r}) \ldots (b_{\ell,0} + b_{\ell,1}t + \dots + b_{\ell,m_\ell}t^{m_\ell})\\
&=\sum_{\substack{\{i_\ell,\dots,i_r\}\in \prod_{\ell\leq j\leq r}[0,\dots,m_j]}} (b_{r,i_r}\cdot b_{r-1,i_{r-1}}\cdots b_{\ell,i_\ell})t^{i_r + \dots + i_\ell}.
\end{align*}
We then substitute the above expression into the preceding expression for $Q_r^{(-1)} (t-\theta)^{s_r} m_r$ to obtain an expression of the form
\[Q_r^{(-1)} (t-\theta)^{s_r} m_r = \sum_i t^{n_i} \sigma^{\ell_i} \left( \sum_{j=1}^d u_{i,j} w_j \right),\]
for triples $(\ell_i,n_i,\bu_i=(u_{i,1},\ldots,u_{i,d})^\top)\in \mathbb Z^{\geq 0} \times \mathbb Z^{\geq 0} \times \C_\infty^d$, where $i$ is indexed over some finite set.  As the coefficients $b_{i,j}$ are coefficients of the polynomials $Q_i$, by \eqref{eq: condition for Q}, we know that $\lVert Q_r \rVert < |\theta|_\infty ^{\tfrac{s_r q}{q-1}}$ and $\lVert Q_i \rVert \leq |\theta|_\infty ^{\tfrac{s_iq}{q-1}}$ for $1 \leq i \leq r-1$. Therefore
\[|b_{r,i_r}\cdot b_{r-1,i_{r-1}}\cdots b_{\ell,i_\ell}|_\infty < |\theta|_\infty ^{\tfrac{(s_r + \dots + s_\ell)q}{q-1}}.\]
Then by \cite[Lemma 4.2.1]{CGM19} each $\bu_{i}$ is inside the radius of convergence of $\Log_{E'}$.  Thus the $t$-module $E'$, the point $\Psi_{r+1}$ and the point $v_{\fs,\fQ}$ satisfy the conditions of Theorem \ref{theorem: main theorem} (b), which we apply.  The final observation is that by the above calculations, $Q_r^{(-1)} (t-\theta)^{s_r} m_r \in \sigma(\calM')$ and hence $\delta_0(\bff^\top) = 0$, which allows us to apply the last statement of Theorem \ref{theorem: main theorem} and finishes the proof.
\end{proof}

\subsection{Relations with a theorem of Chang-Papanikolas-Yu} ${}$\par

We now apply Theorem \ref{theorem: CPY model} to obtain another proof of \cite[Theorem 2.5.2 (a) ($\Leftarrow$) and (b)]{CPY19} in our setting. 

\begin{proposition} \label{proposition: torsion}
Suppose that $\calM$ represents a torsion class in $\textrm{Ext}^1_\cF(\mathbf{1},\calM')$. Then all the values $\mathfrak L(s_1,\dots,s_r)(\theta),\dots,\mathfrak L(s_r)(\theta)$ are in $K$.
\end{proposition}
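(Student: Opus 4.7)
The plan is to convert the torsion hypothesis into an explicit algebraic splitting relation, to rigidify it using the $\sigma$-equivariance of $\Psi$, and then to deduce the claim by evaluating a polynomial identity at $t=\theta$.

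First I would translate the torsion hypothesis. By the isomorphism \eqref{eq: Ext}, $\calM$ represents a torsion class iff there exists a nonzero $a \in \Fq[t]$ and a column vector $\bg = (g_1,\dots,g_r)^\top \in \overline K[t]^r$ such that
\[\bg^{(-1)\top}\Phi' - \bg^\top = a\bff,\]
coming from the relation $a(\bff\bm) = (\sigma - 1)(\bg^\top\bm)$ in $\calM'$. Combining this with $\Phi'\Psi' = \Psi'^{(-1)}$ and the relation $\Psi_{r+1}^{(-1)} = \bff\Psi' + \Psi_{r+1}$ established during the proof of Theorem \ref{theorem: main theorem}, a short direct calculation shows that the row vector
\[\bh := \bg^\top\Psi' - a\Psi_{r+1}\]
satisfies $\bh^{(-1)} = \bh$, so $\bh \in \Mat_{1\times r}(\bL^\sigma) = \Mat_{1\times r}(\Fq(t))$. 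Moreover, the entries of $\Psi'$ and $\Psi_{r+1}$ lie in the ring $\mathcal E$ of entire series (by \cite[Lemma 5.3.1]{Cha14} for the $\mathfrak L$-series and by the product formula for $\Omega$), while $a$ and $\bg$ are polynomials; thus each component of $\bh$ is entire. Since $\mathcal E \cap \Fq(t) = \Fq[t]$, we conclude $\bh \in \Mat_{1\times r}(\Fq[t])$, so each $h_j$ can be evaluated anywhere.

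Next I would extract a vanishing-order statement for $\bg$ at $t=\theta$. Expanding the splitting relation $\bg^{(-1)\top}\Phi' - \bg^\top = a\bff$ row by row using the lower bidiagonal shape of $\Phi'$ and the fact that $\bff = (0,\dots,0, Q_r^{(-1)}(t-\theta)^{s_r})$, one obtains the polynomial identities
\[g_r = \bigl(g_r^{(-1)} - aQ_r^{(-1)}\bigr)(t-\theta)^{s_r}, \qquad g_k = \bigl(g_k^{(-1)} + g_{k+1}^{(-1)} Q_k^{(-1)}\bigr)(t-\theta)^{d_k} \quad (k < r).\]
Reading off the right-hand sides, $(t-\theta)^{d_k}$ divides $g_k$ in $\overline K[t]$ for every $k$, and in particular $g_k(\theta) = 0$ since $d_k \geq 1$. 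Finally, evaluating $\bh = \bg^\top\Psi' - a\Psi_{r+1}$ at $t=\theta$ componentwise and using $(\Psi_{r+1})_j = \mathfrak L(s_j,\dots,s_r)$, the $j$-th component collapses to
\[h_j(\theta) = -a(\theta)\,\mathfrak L(s_j,\dots,s_r)(\theta).\]
Because $\theta$ is transcendental over $\Fq$ the specialization $\Fq[t] \to A$, $t \mapsto \theta$, is injective, so $a(\theta) \neq 0$, and $h_j(\theta) \in A \subset K$; dividing yields $\mathfrak L(s_j,\dots,s_r)(\theta) \in K$ for every $j$, as desired.

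The main obstacle I anticipate is the second step: the vanishing $g_k(\theta) = 0$ is precisely what kills the otherwise transcendental $\Omega(\theta) = 1/\widetilde\pi$ factors present in the off-diagonal entries of $\Psi'$. Without this divisibility, evaluation at $\theta$ would only place $\mathfrak L(s_j,\dots,s_r)(\theta)$ inside a $K$-vector space spanned by powers of $\widetilde\pi$, which is strictly weaker than the claim. The conceptual key, by contrast, is the familiar passage from the semilinear relation defining a splitting in $\cF$ to the $\Fq[t]$-valued identity $\bh$, which is what lets the transcendence of $\widetilde\pi$ translate into an integrality statement about $\bg$.
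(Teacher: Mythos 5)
Your proof is correct, and it takes a genuinely different route from the paper's. The paper deduces the result from the machinery it has just built: it invokes the split-logarithmic identity of Theorem \ref{theorem: CPY model}, expands $\Log_{E'}^!(\bv_{\fs,\fQ})$ over an $A$-basis of the period lattice, and then runs a descending induction that uses the inversion formula of Lemma \ref{L:Lstarinv} to pass from the $\mathfrak L^*$-series in the logarithm coordinates to the $\mathfrak L$-series in the claim. Your argument bypasses the $t$-module, the logarithm, and the period lattice altogether: you convert the torsion hypothesis into a $\sigma$-semilinear splitting relation $\bg^{(-1)\top}\Phi' - \bg^\top = a\bff$, observe that $\bh = \bg^\top\Psi' - a\Psi_{r+1}$ is fixed by the twist (hence lies in $\Fq(t)$, and being entire, in $\Fq[t]$), and then use the divisibility $(t-\theta)^{d_k}\mid g_k$ — read off directly from the shape of $\Phi'$ — to kill the off-diagonal $\Omega$-factors when evaluating at $t=\theta$. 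This is closer in spirit to the ABP/Chang-Papanikolas-Yu style of working directly with $\Phi$ and $\Psi$, and it is shorter and more self-contained; what it loses is the conceptual continuity with the paper's theme, which is precisely to obtain such statements as corollaries of the unified log-algebraic machinery. The one minor point worth flagging is that you implicitly use $\bL^\sigma = \Fq(t)$ and $\mathcal E \cap \Fq(t) = \Fq[t]$; both are standard in this context but should be cited (e.g.\ to \cite{Pap08}).
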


\begin{proof}
By Remark \ref{rem:torsion class}, $\bv_{\fs,\fQ}$ is a torsion point in $E'(\overline K)$ since $\calM$ represents a torsion class in $\textrm{Ext}^1_\cF(\mathbf{1},\calM')$. It follows that $d[a] \Log_{E'}^!(\bv_{\fs,\fQ})$ is a period of $E'$ for some $a \in A$. Thus we can write 
\begin{equation} \label{eq: torsion}
\Log_{E'}^!(\bv_{\fs,\fQ})=d[a_1] \lambda_1 + d[a_2] \lambda_2 + \dots + d[a_r] \lambda_r, \quad a_i \in K,
\end{equation}
where $\lambda_i$ are the $A$-basis of the period lattice $\Lambda_{E'}$ given by the map $\delta_0$ applied to the column vectors of $\Upsilon'^\top$ (see \cite[Cor. 5.21]{HJ20} and also \cite[Lemma 3.7]{GND20}). 

For $1 \leq \ell \leq r$, we consider the $(d_1+\dots+d_\ell)$th coordinate of both sides in \eqref{eq: torsion}. Then
\begin{enumerate}
\item By Theorem \ref{theorem: CPY model} and \eqref{eq:value of omega}, the $(d_1+\dots+d_\ell)$th coordinate of $\Log_{E'}^!(\bv_{\fs,\fQ})$ equals $(-1)^{r-\ell} \mathfrak L^*(s_r,\dots,s_\ell)(\theta) \cdot \widetilde{\pi}^{s_\ell+\dots+s_r}$.

\item For $1 \leq j < \ell$, the $(d_1+\dots+d_\ell)$th coordinate of $\lambda_j$ is zero.

\item The $(d_1+\dots+d_\ell)$th coordinate of $\lambda_\ell$ equals $\widetilde{\pi}^{s_\ell+\dots+s_r}$.

\item The matrices $d[a_i]$ are upper triangular and equal $a_i$ along the main diagonal.  This can be seen quickly from the definition of the $\overline K[\sigma]$-basis \eqref{D:Ksigmabasis} and from the definition of $\delta_0$ in \eqref{D:delta0ATmodel}. 
\end{enumerate}

Thus, by descending induction on $1 \leq \ell \leq r$, we use Lemma \ref{L:Lstarinv} to get
	\[ a_\ell=\mathfrak L(s_\ell,\dots,s_r)(\theta). \]
Since $a_\ell \in K$ for all $1 \leq \ell \leq r$, we deduce that all the values 
	\[ \mathfrak L(s_1,\dots,s_r)(\theta),\mathfrak L(s_2,\dots,s_r)(\theta),\dots,\mathfrak L(s_r)(\theta) \] 
are in $K$.
\end{proof}

\begin{remark} \label{rem:ABP extension}
We explain briefly how to extend the above result to the more general setting considered in \cite{CPY19} and leave the interested reader to fill in the details. 

We put $w:=\sum_{i=1}^r s_i$ and let $Q \in \overline K[t]$ such that $\lVert Q \rVert<|\theta|^{wq/(q-1)}$. We consider the effective dual $t$-motive $\mathcal N \in \textrm{Ext}^1_\cF(\mathbf{1},\calM')$ defined by the matrix
\begin{align*}
\begin{pmatrix}
\Phi' & 0  \\
\mathbf{u}_{w,Q} & 1 
\end{pmatrix} \in \Mat_{r+1}(\overline K[t]), 
\end{align*}
with $\mathbf{u}_{w,Q}=(Q\twist(t-\theta)^w,0,\dots,0)\in \Mat_{1\times r}(\overline K[t])$. Note that $\mathcal N$ admits a rigid analytic trivialization given by 
\begin{align*}
\begin{pmatrix}
\Psi' & 0  \\
(\mathfrak L_{w,Q},0,\dots,0) & 1 
\end{pmatrix} \in \Mat_{r+1}(\overline K[t]), 
\end{align*}
where $\mathfrak L_{w,Q}$ is the series in \eqref{eq: series L} attached to $(w)$ and $(Q)$.

We apply our method to obtain log-algebraic identities for the $t$-module attached to $\mathcal N$. Consequently, we get \cite[Theorem 2.5.2 (a) ($\Leftarrow$) and (b)]{CPY19} which states that if the classes of $\calM$ and $\mathcal N$ are $\Fq[t]$-linearly dependent in $\textrm{Ext}^1_\cF(\mathbf{1},\calM')$, then all the values $\mathfrak L(s_2,\dots,s_r)(\theta),\dots,\mathfrak L(s_r)(\theta)$ are in $K$.
\end{remark}

\begin{remark}  \label{rem: ABPconverse}
We should mention that by using the powerful ABP criterion \cite{ABP04} and also \cite{Pap08}, the converse was also proved in \cite[Theorem 2.5.2(a) ($\Rightarrow$)]{CPY19} under the mild conditions that the values $\mathfrak L(s_\ell,\dots,s_{j-1})(\theta)$ do not vanish for $1 \leq \ell<j\leq r+1$.
\end{remark}

\subsection{A generalization of a theorem of Chang} \label{sec: Chang} ${}$\par

In the fundamental work \cite{AT90} Anderson and Thakur gave logarithmic interpretations for Carlitz zeta values, i.e. depth-one multiple zeta values. In \cite{Cha16} Chang presented very simple and elegant logarithmic interpretations for some special MZV's (see \cite[Theorem 4.1.1]{Cha16}) and deduced an effective criterion for the dimension of depth-two multiple zeta values. However, as Chang and Mishiba \cite{CM20} explained to us, to their knowledge, the relations among Chang's theorem and the works of Chang-Papanikolas-Yu \cite{CPY19} and Chang-Mishiba \cite{CM19,CM19b} are still mysterious.

The aim of this section is to present a generalization of Chang's theorem as an application of our main result (see Theorem \ref{theorem: generalization Chang}). As a consequence, we clarify the connection between the work of Chang \cite{Cha16} and that of Chang-Papanikolas-Yu \cite{CPY19}. We close this section by deducing an unusual formula of Thakur from Chang's theorem (see Remark \ref{rem: strange formula}).


\begin{theorem} \label{theorem: generalization Chang}
Let $\fs=(s_1,\ldots,s_r) \in \N^r$ with $r \geq 2$. Assume that, for $1 \leq \ell<j\leq r+1$, the values $\mathfrak L(s_\ell,\dots,s_{j-1})(\theta)$ do not vanish. We further suppose that $\mathfrak L(s_2,\ldots,s_r)(\theta) \in K$. Then there exist $a_\fs \in A$, an integral point $\mathbf{Z}_\fs \in C^{\otimes (s_1+\ldots+s_r)}(A)$ and a point $\bz_\fs \in \C_\infty^{s_1+\ldots+s_r}$ such that 

1) the last coordinate of $\bz_\fs$ equals $a_\fs \mathfrak L(s_1,\ldots,s_r)(\theta)$,

2) $\Exp_{C^{\otimes (s_1+\ldots+s_r)}}(\bz_\fs)=\mathbf{Z}_\fs$.
\end{theorem}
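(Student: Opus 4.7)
The plan is to apply Theorem \ref{theorem: AT model} to the Anderson-Thakur dual $t$-motive $\calM'_{\fs,\fQ}$ (with $\fQ$ taken to be Anderson-Thakur polynomials, as in \S \ref{sec: CPY}, so the resulting $\mathfrak{L}$-values recover MZV's up to $\Gamma$-factors), and then to descend the resulting split-log identity in $E' = E'_{\fs,\fQ}$ onto the Carlitz sub-$t$-module $C^{\otimes d_1}$ (with $d_1 = s_1+\ldots+s_r$), using the hypothesis $\mathfrak{L}(s_2,\ldots,s_r)(\theta) \in K$ to control the tail contributions.

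First, Theorem \ref{theorem: AT model} yields the split-log identity $\Log_{E'}^!(\bv_{\fs,\fQ}) = \delta_0(\bu)$ in the $d$-dimensional $t$-module $E'$. The block-triangular form of $\Phi'_{\fs,\fQ}$ shows that the sub-dual-$t$-motive $\overline{K}[t]\,m_1 \subset \calM'_{\fs,\fQ}$ is isomorphic to the Carlitz dual $t$-motive for $C^{\otimes d_1}$, which dually produces an embedding $C^{\otimes d_1} \hookrightarrow E'$ identified, via the $\delta_1$-presentation, with the first $d_1$ coordinates of $E'(\overline{K})$. In particular, the $d_1$th coordinate of $\Log_{E'}^!(\bv_{\fs,\fQ})$ equals $(-1)^{r-1}\mathfrak{L}^*(s_r,\ldots,s_1)(\theta)\,\widetilde{\pi}^{d_1}$; by Lemma \ref{L:Lstarinv}, this expands as $\pm\mathfrak{L}(s_1,\ldots,s_r)(\theta)\,\widetilde{\pi}^{d_1}$ plus cross-terms of the form $\pm\mathfrak{L}(s_1,\ldots,s_{k-1})(\theta)\,\mathfrak{L}^*(s_r,\ldots,s_k)(\theta)\,\widetilde{\pi}^{d_1}$ for $2 \leq k \leq r$.

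Second, using the hypothesis $\mathfrak{L}(s_2,\ldots,s_r)(\theta) \in K$ together with Proposition \ref{proposition: torsion} (and its ABP-based converse mentioned in Remark \ref{rem: ABPconverse}) applied to suitable tail sub-extensions of $\calM'_{(s_2,\ldots,s_r),(Q_2,\ldots,Q_r)}$, I would construct $a_\fs \in A$ and an integral point $\mathbf{W}_\fs \in E'(A)$ whose image in the quotient $t$-module $E'_{(s_2,\ldots,s_r),(Q_2,\ldots,Q_r)}$ matches $E'_{a_\fs}(\bv_{\fs,\fQ})$. Then $\mathbf{Z}_\fs := E'_{a_\fs}(\bv_{\fs,\fQ}) - \mathbf{W}_\fs$ lies in the sub-$t$-module $C^{\otimes d_1}(A) \subset E'(A)$. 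Setting $\bz_\fs$ to be the first $d_1$ coordinates of $d[a_\fs]\Log_{E'}^!(\bv_{\fs,\fQ})$ minus the corresponding split-log contribution from $\mathbf{W}_\fs$, the $K$-rational cross-terms from the $\mathfrak{L}^*$-expansion get absorbed into period contributions in lower coordinates, and the $d_1$th coordinate of $\bz_\fs$ becomes precisely $a_\fs \mathfrak{L}(s_1,\ldots,s_r)(\theta)$, yielding the required $\Exp_{C^{\otimes d_1}}(\bz_\fs) = \mathbf{Z}_\fs$.

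The main obstacle lies in the second step: the direct torsion argument of Proposition \ref{proposition: torsion} would require all values $\mathfrak{L}(s_\ell,\ldots,s_{j-1})(\theta)$ for $2 \leq \ell < j \leq r+1$ to lie in $K$, whereas the hypothesis provides only the single rationality $\mathfrak{L}(s_2,\ldots,s_r)(\theta) \in K$. The construction of $\mathbf{W}_\fs$ must leverage precisely this one hypothesis, likely via a careful induction on the depth $r$ exploiting the triangular structure of $\Phi'_{\fs,\fQ}$: at each intermediate level, the cross-term contributions should be absorbed into periods of suitable sub-extensions, and at the final stage the single rationality hypothesis is invoked to close the induction and produce a bona fide integral lift after multiplication by a suitable $a_\fs$.
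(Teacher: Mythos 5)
Your overall strategy matches the paper's: apply Theorem~\ref{theorem: AT model} (specialized to Anderson-Thakur polynomials via Theorem~\ref{theorem: CPY model}), then truncate the resulting split-log identity to the $C^{\otimes d_1}$ block. But the gap you flag in your final paragraph is exactly where your argument stops, and you resolve it incorrectly. You suggest propagating the single rationality $\mathfrak L(s_2,\ldots,s_r)(\theta) \in K$ to the full list $\mathfrak L(s_\ell,\ldots,s_r)(\theta) \in K$ ($2 \leq \ell \leq r$) by ``a careful induction on the depth $r$,'' but there is no such elementary induction available here. The paper's resolution is a direct black-box invocation of \cite[Theorem 2.5.2]{CPY19}: under the nonvanishing hypothesis, that theorem --- whose proof is transcendence-theoretic, resting on the ABP linear-independence criterion --- says precisely that $\mathfrak L(s_2,\ldots,s_r)(\theta) \in K$ already forces $\mathfrak L(s_\ell,\ldots,s_r)(\theta) \in K$ for all $\ell \geq 2$. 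You reference the ABP-based converse in passing, but you do not see that it gives you the entire cascade of rationality statements needed; that is the decisive missing step.

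There is also a secondary difference in the construction of $\mathbf Z_\fs$ and $\bz_\fs$ that simplifies matters considerably. Rather than building a separate integral point $\mathbf W_\fs$ and subtracting on the module side (as you propose), the paper works entirely on the Lie-algebra side: with $a_\ell := \mathfrak L(s_\ell,\ldots,s_r)(\theta) \in K$ for $\ell \geq 2$ now in hand, and $a_\fs \in A$ clearing the denominators, one sets
\[
\bz'_\fs = d[a_\fs]\,\Log_{E'}^!(\bv_{\fs,\fQ}) - d[a_\fs a_2]\lambda_2 - \cdots - d[a_\fs a_r]\lambda_r,
\]
where $\lambda_\ell$ are the period basis vectors $\delta_0(\Upsilon'^\top e_\ell)$. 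The triangular block structure of $\Upsilon'$ and $d[\cdot]$ together with Lemma~\ref{L:Lstarinv} show that all coordinates of $\bz'_\fs$ beyond the first $d_1$ vanish, and the $d_1$th coordinate is $a_\fs\,\mathfrak L(s_1,\ldots,s_r)(\theta)$. Since the $\lambda_\ell$ exponentiate to zero, $\Exp_{E'}(\bz'_\fs) = E'_{a_\fs}\bv_{\fs,\fQ} \in E'(A)$, and truncating to the $C^{\otimes d_1}$ sub-$t$-module gives the theorem. This avoids your $\mathbf W_\fs$ entirely and makes the ``absorption into period contributions'' you allude to concrete.
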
 

\begin{proof}
Since the values $\mathfrak L(s_\ell,\dots,s_{j-1})(\theta)$ do not vanish  for $1 \leq \ell<j\leq r+1$, the hypothesis of \cite[Theorem 2.5.2]{CPY19} holds. Thus this theorem implies that $\mathfrak L(s_3,\ldots,s_r)(\theta) ,\ldots,\mathfrak L(s_r)(\theta)$ are also in $K$ since $\mathfrak L(s_2,\ldots,s_r)(\theta) \in K$. 

For $2 \leq \ell \leq r$, we set
	\[ a_\ell=\mathfrak L(s_\ell,\dots,s_r) \in K. \]
We take $a_\fs \in A$ such that $a_\fs a_\ell \in A$ for all $2 \leq \ell \leq r$.

We denote by $\lambda_1,\ldots,\lambda_r$ the $A$-basis of the period lattice $\Lambda_{E'}$ given by the map $\delta_0$ applied to the column vectors of $\Upsilon'^\top$ (see \cite[Cor. 5.21]{HJ20} and also \cite[Lemma 3.7]{GND20}). 

For $1 \leq \ell \leq r$, we consider the $(d_1+\dots+d_\ell)$th coordinate of $\Log_{E'}^!(\bv_{\fs,\fQ})$ and $\lambda_1,\ldots,\lambda_r$. Then
\begin{enumerate}
\item By Theorem \ref{theorem: CPY model} and \eqref{eq:value of omega}, the $(d_1+\dots+d_\ell)$th coordinate of $\Log_{E'}^!(\bv_{\fs,\fQ})$ equals $(-1)^{r-\ell} \mathfrak L^*(s_r,\dots,s_\ell)(\theta) \cdot \widetilde{\pi}^{s_\ell+\dots+s_r}$.

\item For $1 \leq j < \ell$, the $(d_1+\dots+d_\ell)$th coordinate of $\lambda_j$ is zero.

\item The $(d_1+\dots+d_\ell)$th coordinate of $\lambda_\ell$ equals $\widetilde{\pi}^{s_\ell+\dots+s_r}$.

\item The matrices $d[a_i]$ are upper triangular and equal $a_i$ along the main diagonal.  This can be seen quickly from the definition of the $\overline K[\sigma]$-basis \eqref{D:Ksigmabasis} and from the definition of $\delta_0$ in \eqref{D:delta0ATmodel}. 
\end{enumerate}

We consider
\[ \bz_\fs'=d[a_\fs]\Log_{E'}^!(\bv_{\fs,\fQ})-d[a_\fs a_2] \lambda_2- \dots -d[a_\fs a_r] \lambda_r. \]
Then we deduce 
\begin{enumerate}
\item The $d_1$th coordinate of $\bz_\fs'$ equals $a_\fs \mathfrak L(s_\ell,\dots,s_r)(\theta)$ by Lemma \ref{L:Lstarinv}.

\item For $d_1 < j \leq d_1+\dots+d_r$, the $j$th coordinate of $\bz_\fs'$ is zero by \cite[Theorem 2.5.2]{CPY19} (see also Proposition \ref{proposition: torsion}, Remark \ref{rem: ABPconverse} and \cite[Lemma 3.4.5]{CGM19}).
\end{enumerate}
Thus to conclude it suffices to choose $\bz_\fs$ to be the first $d_1$ coordinates of $\bz_\fs'$. This finishes the proof.
\end{proof}

\begin{remark}
1) The proof presented above grew out of many discussions of the second author and F. Pellarin to whom he would like to express his gratitude.

2) Chang \cite{CM20} informed us that Yen-Tsung Chen and Harada are working on generalizing Chang's result to the case where $\fQ=(u_1,\dots,u_r) \in \overline K^r$ satisfying $|u_r|_{\infty} < q^{\frac{s_r q}{q-1}}$ and $|u_i|_{\infty} \leq q^{\frac{s_i q}{q-1}}$ for $1 \leq i \leq r-1$.
\end{remark}

\begin{remark} \label{rem: strange formula}
If we write the Carlitz logarithm attached to the Carlitz module $C$ as
	\[ \log_C=\sum_{i \geq 0} \frac{1}{\ell_i} \tau^i, \quad \ell_i \in A, \]
then in \cite[Theorem 6]{Tha09} Thakur gave the following ``strange" formula
\begin{equation} \label{eq: curious id}
\zeta_A(1,q^3-1)=\left( \frac{1}{\ell_3}+\frac{1}{\ell_2}+\frac{\theta}{\ell_2} \right) \zeta_A(q^3) - \frac{1}{\ell_2} \left(\log_C(\theta^{1/q}) \right)^{q^3}.
\end{equation}

We claim that this identity can be seen as an explicit example of the above Theorem. In fact, we put $\fs=(1,q^3-1)$ and consider the tensor power $C^{\otimes q^3}$. We know that, by \cite{AT90}, the last row of the logarithm associated to $C^{\otimes q^3}$ denoted by $\iota\inv(\Log_{C^{\otimes q^3}})$ is given by
	\[ \iota\inv(\Log_{C^{\otimes q^3}}(0,\ldots,0,x)^\top)=\sum_{i \geq 0} \frac{1}{\ell_i^{q^3}} \tau^i(x). \]
Thus
	\[ \left(\log_C(\theta^{1/q}) \right)^{q^3}=\sum_{i \geq 0} \frac{1}{\ell_i^{q^3}} \tau^{i+3}(\theta^{1/q})=\sum_{i \geq 0} \frac{1}{\ell_i^{q^3}} \tau^i(\theta^{q^2})=\iota\inv(\Log_{C^{\otimes q^3}}(0,\ldots,0,\theta^{q^2})^\top). \]
The celebrated Anderson-Thakur theorem \cite[Theorem 3.8.3]{AT90} shows that $\zeta_A(q^3)$ can be interpreted as the last coordinate of $\Log_{C^{\otimes q^3}}$. We conclude that \eqref{eq: curious id} gives an explicit interpretation for the MZV $\zeta_A(1,q^3-1)$ as the last coordinate of a split-logarithmic identity involving $\Log_{C^{\otimes q^3}}$ as is implied by Chang's theorem.

F. Pellarin has informed us that, in an ongoing project with O. Gezmis, they construct more examples of such explicit identities for MZV's.
\end{remark}

\subsection{Log-algebraic identities for Chang-Papanikolas-Yu's $t$-modules} \label{sec: CPY} ${}$\par

In this section we specialize $\fQ=(Q_1,\ldots,Q_r)$ to Anderson-Thakur polynomials and study the corresponding $t$-modules considered in the work of Chang, Papanikolas and Yu \cite{CPY19} (see also \cite{AT09}). Then we apply Theorem \ref{theorem: main theorem} to obtain several applications to this case.

These dual $t$-motives are related to the multiple zeta values defined by Thakur \cite{Tha04} as follows. For any tuple of positive integers $\fs=(s_1,\ldots,s_r) \in \N^r$, we introduce
\begin{equation*} 
\zeta_A(\fs)=\zeta_A(s_1,\ldots,s_r):=\sum \frac{1}{a_1^{s_1} \ldots a_r^{s_r}} \in K_\infty
\end{equation*}
where the sum runs through the set of tuples $(a_1,\ldots,a_r) \in A_+^r$ with $\deg a_1>\ldots>\deg a_r$; $r$ is called the depth and $w:=s_1+\ldots+s_r$ the weight of $\zeta_A(\fs)$. Depth one MZV's are also called Carlitz zeta values (see \cite{Car35}). It is proved that $\zeta_A(\fs)$ are nonzero by Thakur \cite{Tha09}. We refer the reader to the excellent surveys \cite{Tha17,Tha20} for more details about MZV's.

We briefly review Anderson-Thakur polynomials introduced in \cite{AT90}. For $k \geq 0$, we set
\begin{align*}
[k] &:=\theta^{q^k}-\theta, \\
D_k &:= \prod^k_{\ell=1} [\ell]^{q^{k-\ell}}=[k][k-1]^q \dots [1]^{q^{k-1}}.
\end{align*}
For $n \in \N$, we write
	\[ n-1 = \sum_{j \geq 0} n_j q^j, \quad 0 \leq n_j \leq q-1, \]
and define
	\[ \Gamma_n:=\prod_{j \geq 0} D_j^{n_j}. \]
We set 
\begin{align*}
\gamma_0(t) &:=1, \\
\gamma_j(t) &:=\prod^j_{\ell=1} (\theta^{q^j}-t^{q^\ell}),\quad j\geq 1.
\end{align*}
Then Anderson-Thakur polynomials $\alpha_n(t) \in A[t]$ are given by the generating series
	\[ \sum_{n \geq 1} \frac{\alpha_n(t)}{\Gamma_n} x^n:=x\left( 1-\sum_{j \geq 0} \frac{\gamma_j(t)}{D_j} x^{q^j} \right)^{-1}. \]
Finally, we define $H_n(t)$ by switching $\theta$ and $t$:
	\[ H_n(t)=\alpha_n(t) \big|_{t=\theta,\theta=t}.\]
By \cite[3.7.3]{AT90} we get that $\lVert H_n \rVert < |\theta|_\infty ^{\tfrac{n q}{q-1}}.$ Thus the polynomials $(Q_1,\ldots,Q_r)=(H_{s_1},\ldots,H_{s_r})$ satisfy \eqref{eq: condition for Q}. 

In what follows, we will specialize the $t$-motives $\calM$ and $\calM'$ from the previous sections to $(Q_1,\ldots,Q_r)=(H_{s_1},\ldots,H_{s_r})$ and get  logarithmic interpretations for multiple zeta star values. 

We wish to study the point $\bv_\fs \in E'(\overline K)$ which corresponds to $H_{s_r}^{(-1)}(t-\theta)^{s_r} m_r \in \calM'/(\sigma-1)\calM'$. Note that this point was first introduced by Chang, Papanikolas and Yu in \cite{CPY19} and played an important role in their effective criterion to determine whether the corresponding multiple zeta value $\zeta_A(\fs)$ is Eulerian. Further, they proved the following integrality result:
\begin{theorem}[\cite{CPY19}, Theorem 5.3.4] \label{theorem: CPY integral}
1) The $t$-module $E'$ is defined over $A$.

2) The point $\bv_\fs$ is an integral point in $E'(A)$.
\end{theorem}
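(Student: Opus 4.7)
The plan is to carry out two explicit computations in the $\overline K[\sigma]$-basis $\bw$ of $\calM'$ from \eqref{D:Ksigmabasis}, leveraging the ring identity
\[
p^{(-1)}(t)\,\sigma \;=\; \sigma\,p(t) \qquad \text{in}\ \overline K[t,\sigma],
\]
valid for every $p\in\overline K[t]$. This identity is the key tool for absorbing each Frobenius-twisted factor $H_{s_i}^{(-1)}(t)$ (whose coefficients lie only in $\F_q[\theta^{1/q}]$) back into $H_{s_i}(t)\in A[t]$, as soon as it is combined with a $\sigma$-term. Combined with the formula $\delta_1(c\,\sigma^l w_j) = c^{q^l} e_j$ from \S\ref{sec: dual motives}, which guarantees that any coefficient in $A^{1/q^l}$ attached to $\sigma^l$ becomes $A$-integral after $\delta_1$, this will force the final vectors and matrices into $A$.

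First I would iterate the relations $\sigma m_1 = (t-\theta)^{d_1}m_1$ and $(t-\theta)^{d_i}m_i = \sigma m_i - H_{s_{i-1}}^{(-1)}(t)(t-\theta)^{d_{i-1}}m_{i-1}$ for $i\geq 2$, applying $H^{(-1)}(t)\sigma = \sigma H(t)$ at every step, to obtain the closed form
\[
(t-\theta)^{d_i}m_i \;=\; \sum_{k=1}^{i}(-1)^{i-k}\,\sigma\Big[\prod_{j=k}^{i-1} H_{s_j}(t)\Big] m_k
\]
in $\calM'$, where the empty product at $k = i$ is understood as $1$. Specializing to $i = r$, multiplying by $H_{s_r}^{(-1)}(t)$, and invoking $H^{(-1)}(t)\sigma = \sigma H(t)$ once more gives
\[
H_{s_r}^{(-1)}(t)(t-\theta)^{d_r}m_r \;=\; \sum_{k=1}^{r}(-1)^{r-k}\,\sigma\,P_k(t)\,m_k, \qquad P_k(t) := \prod_{j=k}^{r} H_{s_j}(t) \in A[t].
\]

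The next step is to expand each summand $\sigma P_k(t) m_k$ in the basis $\bw$ and then apply $\delta_1$. I would recursively reduce $P_k(t) m_k$ via Euclidean division of $P_k$ by $(t-\theta)^{d_k}$ combined with the $\sigma$-relation $(t-\theta)^{d_k}m_k = \sigma m_k - H_{s_{k-1}}^{(-1)}(t)(t-\theta)^{d_{k-1}}m_{k-1}$ (or $\sigma m_1$ when $k = 1$), using $H^{(-1)}\sigma = \sigma H$ to keep all intermediate polynomial coefficients in $A[t]$. The recursion terminates since at each step either the remainder degree drops or the block-index $k$ decreases, and it produces an expansion whose $\sigma^l$-coefficient lies in $A^{1/q^l}$ for every $l$. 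Applying $\delta_1$ raises each such coefficient to its $q^l$-th power, yielding a vector in $A^d$ and proving $\bv_\fs \in E'(A)$, which settles Part 2.

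Part 1 will follow from the same machinery applied column-by-column. For $w_k\in\bw$ not at the top of a block, $t\cdot w_k$ is already an $A$-combination of two elements of $\bw$; for a top element $w_1^{(i)} = (t-\theta)^{d_i-1}m_i$, the identity $t\cdot w_1^{(i)} = (t-\theta)^{d_i}m_i + \theta\,w_1^{(i)}$ together with the closed form above and the recursive reduction exhibits every $\sigma^l$-coefficient in $A^{1/q^l}$; applying $\delta_1$ shows that each $\tau^l$-matrix of $E'_\theta$ has entries in $A$, i.e., $E'_\theta\in\Mat_d(A)[\tau]$. The hardest part will be the bookkeeping in the recursive reduction: one must simultaneously track the decreasing remainder degrees, the descending block-indices, and the way each $(-1)$-twist on an Anderson--Thakur polynomial pairs with the $\sigma$-operator and the built-in $q^l$-Frobenius in $\delta_1$. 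This is precisely where the two ingredients $H_n\in A[t]$ and $H^{(-1)}(t)\sigma = \sigma H(t)$ combine to force everything into $A$.
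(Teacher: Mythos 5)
Your proposal is correct and follows essentially the same route as the paper's proof of the analogous integrality statement for the star dual $t$-motive (Proposition \ref{prop: integral} and the proposition immediately following it), which the paper remarks adapts the original CPY argument for this very theorem without much modification: the commutation $p^{(-1)}(t)\sigma=\sigma p(t)$, the identity expressing $(t-\theta)^{d_\ell}m_\ell$ as $\sigma$ applied to an $A[t]$-linear combination of $m_1,\dots,m_\ell$, and recursive Euclidean division by $(t-\theta)^{d_\ell}$ are precisely the tools used there. The only substantive difference is organisational: your termination argument is a bit informal, whereas the paper arranges the reduction as a clean double induction (outer on the block index $\ell$, inner on $\deg g$), which makes termination immediate.
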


The following examples were given in \cite[\S 6.1.2]{CPY19}. We refer the reader there for more examples.

\begin{example}
We consider $q=3$ and $\fs=(s_1=2,s_2=4)$. Then
\begin{align*}
E'_\theta=
\left(
\begin{array}{c c c c c c | c c c c }
\theta & 1 & 0 & 0 & 0 & 0 & 0 & 0 & 0 & 0  \\
0 & \theta & 1 & 0 & 0 & 0 & 0 & 0 & 0 & 0  \\
0 & 0 & \theta & 1 & 0 & 0 & 0 & 0 & 0 & 0  \\
0 & 0 & 0 & \theta & 1 & 0 & 0 & 0 & 0 & 0   \\
0 & 0 & 0 & 0 & \theta & 1 & 0 & 0 & 0 & 0  \\
\tau & 0 & 0 & 0 & 0 & \theta & -\tau & 0 & 0 & 0  \\
\hline
0 & 0 & 0 & 0 & 0 & 0 & \theta & 1 & 0 & 0  \\
0 & 0 & 0 & 0 & 0 & 0 & 0 & \theta & 1 & 0   \\
0 & 0 & 0 & 0 & 0 & 0 & 0 & 0 & \theta & 1   \\
0 & 0 & 0 & 0 & 0 & 0 & \tau & 0 & 0 & \theta  
\end{array}
\right)
\end{align*}
and 
	\[ \bv_\fs=(0,0,1,0,1,(\theta+2\theta^3),2,0,2,(2\theta+\theta^3))^\top. \]
\end{example}

\begin{example}
We consider $q=3$ and $\fs=(s_1=4,s_2=2)$. Then
\begin{align*}
E'_\theta=
\left(
\begin{array}{c c c c c c | c c }
\theta & 1 & 0 & 0 & 0 & 0 & 0 & 0   \\
0 & \theta & 1 & 0 & 0 & 0 & 0 & 0  \\
0 & 0 & \theta & 1 & 0 & 0 & \tau & 0  \\
0 & 0 & 0 & \theta & 1 & 0 & 0 & 0   \\
0 & 0 & 0 & 0 & \theta & 1 & \tau & 0  \\
\tau & 0 & 0 & 0 & 0 & \theta & (\theta+2\theta^3)\tau & 0 \\
\hline
0 & 0 & 0 & 0 & 0 & 0 & \theta & 1  \\
0 & 0 & 0 & 0 & 0 & 0 & \tau  & \theta 
\end{array}
\right)
\end{align*}
and 
	\[ \bv_\fs=(0,0,1,0,1,(\theta+2\theta^3),0,1)^\top. \]
\end{example}

For $1 \leq \ell < j$, we have defined the series
	\[ \mathfrak L(s_\ell,\dots,s_{j-1}):=\sum_{i_\ell > \dots > i_{j-1} \geq 0} (\Omega^{s_{j-1}} H_{s_{j-1}})^{(i_{j-1})} \dots (\Omega^{s_\ell} H_{s_\ell})^{(i_\ell)}, \]
	\[ \mathfrak L^*(s_\ell,\dots,s_{j-1}):=\sum_{i_\ell \geq \dots \geq i_{j-1} \geq 0} (\Omega^{s_{j-1}} H_{s_{j-1}})^{(i_{j-1})} \dots (\Omega^{s_\ell} H_{s_\ell})^{(i_\ell)}. \]
By \cite[5.5.3]{Cha14} we have
\begin{equation} \label{eq: MZV}
\left[\mathfrak L(s_\ell,\dots,s_{j-1}) \Omega^{-(s_\ell+\dots+s_{j-1})} \right](\theta)=\Gamma_{s_\ell} \dots \Gamma_{s_{j-1}} \zeta_A(s_\ell,\dots,s_{j-1}).
\end{equation}

We define the multiple zeta star values by
\begin{equation*} 
\zeta^*_A(s_1,\dots,s_{r}):=\sum \frac{1}{a_1^{s_1} \ldots a_r^{s_{r}}} \in K_\infty
\end{equation*}
where the sum runs through the set of tuples $(a_1,\ldots,a_r) \in A_+^r$ with $\deg a_1\geq\ldots\geq\deg a_r$.  Note that by \cite[Eq. (1)]{AT09} we have
\begin{equation*} 
\Gamma_{s_\ell} \dots \Gamma_{s_{j-1}} \zeta^*_A(s_\ell,\dots,s_{j-1})=\left[ \mathfrak L^*(s_\ell,\dots,s_{j-1}) \Omega^{-(s_\ell+\dots+s_{j-1})} \right](\theta).
\end{equation*}
We observe that these quantities can be completely determined by the relations 
\begin{align*} 
\zeta^*_A(s_r,\dots,s_1) = \sum_{\ell=2}^r (-1)^\ell \zeta_A(s_1,\dots,s_{\ell-1}) \zeta^*_A(s_r,\dots,s_\ell) + (-1)^{r-1} \zeta_A(s_1,\dots,s_r).
\end{align*}

We apply Theorem \ref{theorem: AT model} to this situation and obtain
\begin{theorem} \label{theorem: CPY model}
Recall that for $1 \leq \ell \leq r$, we put $d_\ell=s_\ell+\dots+s_r$. Then we have
\begin{align*}
\Log_{E'}^!(\bv_\fs)
=\delta_0 \begin{pmatrix}
(-1)^{r-1} \mathfrak L^*(s_r,\dots,s_1) \Omega^{-(s_1+\dots+s_r)} \\
(-1)^{r-2} \mathfrak L^*(s_r,\dots,s_2) \Omega^{-(s_2+\dots+s_r)} \\
\vdots \\
\mathfrak L^*(s_r) \Omega^{-s_r} 
\end{pmatrix}.
\end{align*}
In particular, for $1 \leq \ell \leq r$, the $(d_1+\dots+d_\ell)$th coordinate of $\Log_{E'}^!(\bv_\fs)$ equals $(-1)^{r-\ell} \Gamma_{s_\ell} \dots \Gamma_{s_r} \zeta^*_A(s_r\dots,s_\ell)$.
\end{theorem}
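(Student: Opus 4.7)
The proof is essentially a direct specialization of Theorem \ref{theorem: AT model} followed by a careful reading of the coordinates via the explicit formula for $\delta_0$ and the known evaluation of the star $\mathfrak L^*$-series at $t=\theta$.

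First, I would verify the hypothesis: the Anderson–Thakur polynomials $H_{s_i}$ satisfy condition \eqref{eq: condition for Q} for $(Q_1,\ldots,Q_r)=(H_{s_1},\ldots,H_{s_r})$, since by \cite[3.7.3]{AT90} one has the strict inequality $\lVert H_n \rVert < |\theta|_\infty^{nq/(q-1)}$ for every $n$, which is stronger than both required bounds (one strict at position $r$, the others non-strict). This is noted explicitly just before the theorem.

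With the hypothesis in hand, Theorem \ref{theorem: AT model} applies verbatim to the tuple $\fQ=(H_{s_1},\ldots,H_{s_r})$ and yields the displayed split-logarithmic identity for $\bv_\fs=\bv_{\fs,\fQ}$. This already gives the first formula of the theorem, so only the second assertion about the $(d_1+\dots+d_\ell)$th coordinate remains.

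To extract the $(d_1+\dots+d_\ell)$th coordinate, I would invoke the explicit description of $\delta_0$ given in \eqref{D:delta0ATmodel}: if $m=\sum_{k=1}^r\bigl(c_{d_k-1,k}(t-\theta)^{d_k-1}+\cdots+c_{0,k}+F_k(t)(t-\theta)^{d_k}\bigr) m_k$, then $\delta_0(m)$ lists the coefficients $(c_{d_k-1,k},\dots,c_{0,k})$ block by block for $k=1,\dots,r$. Hence the $(d_1+\dots+d_\ell)$th coordinate is precisely $c_{0,\ell}$, which is the value at $t=\theta$ of the $\ell$th entry of the column vector. Applying this to the column vector from Theorem \ref{theorem: AT model} and using the extension of $\delta_0$ to $\calM'\otimes_{\overline K[t]}\bT$ (which is legitimate since $\mathfrak L^*$-series lie in $\bT$ and $\Omega^{-n}$ is regular at $t=\theta$), the $(d_1+\dots+d_\ell)$th coordinate equals
\[
(-1)^{r-\ell}\bigl[\mathfrak L^*(s_r,\dots,s_\ell)\,\Omega^{-(s_\ell+\dots+s_r)}\bigr](\theta).
\]
Finally, the identity stated just after \eqref{eq: MZV},
\[
\Gamma_{s_\ell}\cdots\Gamma_{s_r}\,\zeta_A^*(s_r,\dots,s_\ell)=\bigl[\mathfrak L^*(s_r,\dots,s_\ell)\,\Omega^{-(s_\ell+\dots+s_r)}\bigr](\theta),
\]
produces the claimed value $(-1)^{r-\ell}\Gamma_{s_\ell}\cdots\Gamma_{s_r}\,\zeta_A^*(s_r,\dots,s_\ell)$.

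There is no real obstacle here: all analytic and combinatorial work has been done upstream in Theorem \ref{theorem: AT model} and in the $\mathfrak L^*$-to-$\zeta^*_A$ dictionary from \cite{Cha14,AT09}. The only mild care required is to confirm that evaluating the column-vector entries at $t=\theta$ is the correct interpretation of the last coordinate of each $\delta_0$-block, which is immediate from \eqref{D:delta0ATmodel}.
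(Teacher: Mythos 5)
Your proposal is correct and matches the paper's intended argument: the paper does not write out a separate proof of this theorem, but instead states it immediately after the sentence ``We apply Theorem~\ref{theorem: AT model} to this situation,'' having already checked (via \cite[3.7.3]{AT90}) that the Anderson--Thakur polynomials satisfy \eqref{eq: condition for Q}, and having recorded the $\mathfrak L^*$-to-$\zeta^*_A$ dictionary just after \eqref{eq: MZV}. Your extraction of the $(d_1+\dots+d_\ell)$th coordinate via \eqref{D:delta0ATmodel} (the last entry of the $\ell$th block is the constant term in $(t-\theta)$, i.e.\ the value at $t=\theta$) is exactly the intended reading, so nothing is missing.
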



\subsection{Log-algebraic identities for $t$-modules connected to multiple polylogarithms at algebraic points} ${}$\par \label{sec: CM model}

For $\bu=(u_1,\dots,u_r) \in \overline K^r$ satisfying $|u_r|_{\infty} < q^{\frac{s_r q}{q-1}}$ and $|u_i|_{\infty} \leq q^{\frac{s_i q}{q-1}}$ for $1 \leq i \leq r-1$, Chang and Mishiba specialize the dual $t$-motives $\calM$ and $\calM'$ from the previous section to $(Q_1,\dots,Q_r) = (u_1,\dots,u_r)$ and thus get logarithmic interpretations for Carlitz star multiple polylogarithms (see \cite[Theorem 4.2.3]{CM19b} and also \cite[Theorem 3.3.7]{CGM19}).  They then use these polylogarithmic interpretations to get a logarithmic interpretation for MZV's; we will present a more direct way to recover MZV's using our techniques in \S \ref{S:Log Int for MZVs}.  In this section we show how our techniques recover Chang and Mishiba's result \cite[Theorem 4.2.3]{CM19b} (which only gives a certain coordinate of the logarithm) and that they also include the extra information given in Chang, Mishiba and the first author's result \cite[Theorem 3.3.7]{CGM19} (which gives all the coordinates of the logarithm).

We now define Carlitz (star) multiple polylogarithms, following as in \cite[\S 3.1]{CM19b}.  For any index $\fs=(s_{1},\dots,s_{r})\in \N^r$ we define the series
\begin{align*}
\text{Li}_\fs(z_1,\dots,z_r):=\sum_{i_1 > \dots > i_r \geq 0} \frac{z_1^{(i_1)} \dots z_r^{(i_r)}}{L_{i_1}^{s_1} \dots L_{i_r}^{s_r}}\in \power{\C_\infty}{z_{1},\dots,z_{r}}, \\
\text{Li}^*_\fs(z_1,\dots,z_r):=\sum_{i_1 \geq \dots \geq i_r \geq 0} \frac{z_1^{(i_1)} \dots z_r^{(i_r)}}{L_{i_1}^{s_1} \dots L_{i_r}^{s_r}}\in \power{\C_\infty}{z_{1},\dots,z_{r}},
\end{align*}
where $L_{0}:=1$ and $L_{i}:=(\theta-\theta^{q})\cdots (\theta-\theta^{q^{i}})$ for $i\in \N$.  The following formula is shown in \cite[Lemma 4.2.1]{CM19}:
\begin{align*}
& \text{Li}^*_{(s_r,\dots,s_1)}(z_r,\dots,z_1) \\
&= \sum_{\ell=2}^r (-1)^\ell \text{Li}_{(s_1,\dots,s_{\ell-1})}(z_1,\dots,z_{\ell-1}) \text{Li}^*_{(s_r,\dots,s_\ell)}(z_r,\dots,z_\ell) + (-1)^{r+1} \text{Li}_{(s_1,\dots,s_r)}(z_1,\dots,z_r).
\end{align*}
In particular, for $r=2$, we obtain
\begin{align*}
\text{Li}^*_{(s_2,s_1)}(z_2,z_1) =& \text{Li}_{s_1}(z_1) \text{Li}^*_{s_2}(z_2)- \text{Li}_{(s_1,s_2)}(z_1,z_2) \\
=& \text{Li}_{s_1}(z_1) \text{Li}_{s_2}(z_2)- \text{Li}_{(s_1,s_2)}(z_1,z_2).
\end{align*}
We then define $t$-deformed versions of $L_{i}$ as
\begin{equation}\label{D:LLdef}
\LL_{0}:=1\hbox{ and }\LL_{i}:=(t-\theta^{q})\cdots(t-\theta^{q^{i}}) \hbox{ for }i\in \N.
\end{equation}
We also define $t$-deformations of the $\Li$ and $\Li^*$ series as is done in \cite[\S 3.1]{CGM19} by setting
\begin{equation*} 
\fLi_\fs(t;z_1,\dots,z_r) := \sum_{i_1>\dots>i_r \geq 0} \frac{z_1^{q^{i_1}}\dots z_r^{q^{i_r}}}{\LL_{i_1}^{s_1}\dots \LL_{i_r}^{s_r}}\in \power{\C_\infty}{t,z_{1},\dots,z_{r}},
\end{equation*}
\begin{equation*} 
\fLis_\fs(t;z_1,\dots,z_r) := \sum_{i_1\geq \dots \geq i_r \geq 0} \frac{z_1^{q^{i_1}}\dots z_r^{q^{i_r}}}{\LL_{i_1}^{s_1}\dots \LL_{i_r}^{s_r}}\in \power{\C_\infty}{t,z_{1},\dots,z_{r}}.
\end{equation*}

Observe that if we set $\fQ = (Q_1,\dots, Q_r)$ from \S \ref{SS:AndThakModels} to be equal $\bu = (u_1,\dots, u_r)\in \overline K^n$, then we have the equalities
\[\fL(s_1,\dots,s_r) =  \Omega^{s_1 + \dots + s_r} \fLi(t;u_1,\dots,u_r),\]
\[\fL^*(s_1,\dots,s_r) =  \Omega^{s_1 + \dots + s_r} \fLis(t;u_1,\dots,u_r).\]
We set $\Phi_\bu$ equal to $\Phi$ from \S \ref{SS:AndThakModels} with $(Q_1,\dots,Q_r) = (u_1,\dots,u_r)$, and similarly for $\calM_\bu$ and $\calM'_\bu$.  Then, using the above equations we quickly deduce that the rigid analytic trivialization given by	
\begin{align*}
\Psi_\bu &=
\begin{pmatrix}
\Omega^{s_1+\dots+s_r} & 0 & 0 & \dots & 0 \\
\fLi_{s_1}(u_1) \Omega^{s_2+\dots+s_r} & \Omega^{s_2+\dots+s_r} & 0 & \dots & 0 \\
\vdots & \fLi_{s_2}(u_2) \Omega^{s_3+\dots+s_r} & \ddots & & \vdots \\
\vdots & & \ddots & \ddots & \vdots \\
\fLi_{(s_1,\dots,s_{r-1})}(u_1,\dots,u_{r-1}) \Omega^{s_r}  & \fLi_{(s_2,\dots,s_{r-1})}(u_2,\dots,u_{r-1}) \Omega^{s_r} & \dots & \Omega^{s_r}& 0 \\
\fLi_{(s_1,\dots,s_r)}(u_1,\dots,u_r)  & \fLi_{(s_2,\dots,s_r)}(u_2,\dots,u_r)  & \dots & \fLi_{s_r}(u_r)  & 1 
\end{pmatrix}.
\end{align*}
satisfies $\Psi_\bu \in \text{GL}_{r+1}(\bT)$ and 
	\[ \Psi_\bu^{(-1)}=\Phi_\bu \Psi_\bu. \]

The periods of $\calM_\bu$ are given by the matrix $\Upsilon_\bu=\Psi\inv_\bu$:
\begin{align*}
\Upsilon_\bu &=
\begin{pmatrix}
\Omega^{-(s_1+\dots+s_r)} & 0 & \dots & 0 \\
-\fLi^*_{s_1}(u_1) \Omega^{-(s_1+\dots+s_r)} & \Omega^{-(s_2+\dots+s_r)} & \dots & 0 \\
\vdots & & \ddots & \vdots \\
(-1)^r \fLi^*_{(s_r,\dots,s_1)}(u_r,\dots,u_1) \Omega^{-(s_1+\dots+s_r)} & \dots & -\fLi^*_{s_r}(u_r) \Omega^{-s_r} & 1 
\end{pmatrix}.
\end{align*}

Note that $\Upsilon_\bu \in \text{GL}_{r+1}(\bT)$ and 
\begin{align*}
\bff_\bu &=(0,\dots,0,u_r^{(-1)} (t-\theta)^{s_r}), \\
(\Psi_\bu)_{r+1} &=(\fLi_{(s_1,\dots,s_r)}(u_1,\dots,u_r),\fLi_{(s_2,\dots,s_r)}(u_2,\dots,u_r),\dots,\fLi_{s_r}(u_r)).
\end{align*}
In particular, they verify the hypothesis of Theorem \ref{theorem: main theorem} (a) by \cite[Lemma 5.3.1 and Theorem 5.5.2]{Cha14}. 

We can define the point $\bv_\bu \in E'_\bu(\overline K)$ as before. By the same calculations given in the proof of Theorem \ref{theorem: AT model} we see that it coincides with the point given in \cite[Equation (4.1.6)]{CM19b}  (see also \cite[Equation (3.3.1)]{CM19}):
\begin{align*}
\bv_\bu=\begin{pmatrix}
0 \\
\vdots \\
0 \\
(-1)^{r-1} u_r \dots u_1 \\
0 \\
\vdots \\
0 \\
(-1)^{r-2} u_r \dots u_2 \\
\vdots \\
0 \\
\vdots \\
0 \\
u_r 
\end{pmatrix}.
\end{align*}
Here for $1 \leq \ell \leq r$, the $(d_1+\dots+d_\ell)$th coordinate of $\bv_\bu$ equals $(-1)^{r-\ell} u_r \dots u_\ell$ and the other coordinates of $\bv_\bu$ vanish.  Applying Theorem \ref{theorem: main theorem} in this situation gives a refinement of \cite[Theorem 4.2.3]{CM19b},  (see also \cite{CM19}), and it recovers \cite[Theorem 3.3.7]{CGM19}.
\begin{theorem} \label{theorem: CM model}
Recall that for $1 \leq \ell \leq r$, we put $d_\ell=s_\ell+\dots+s_r$. Then we have
\begin{align*}
\Log_{E'_\bu}(\bv_\bu)
=\delta_0 \begin{pmatrix}
(-1)^{r-1} \fLi^*_{(s_r,\dots,s_1)}(u_r,\dots,u_1) \Omega^{-(s_1+\dots+s_r)} \\
(-1)^{r-2} \fLi^*_{(s_r,\dots,s_2)}(u_r,\dots,u_2) \Omega^{-(s_2+\dots+s_r)} \\
\vdots \\
\fLi^*_{s_r}(u_r) \Omega^{-s_r}
\end{pmatrix}.
\end{align*}
In particular, for $1 \leq \ell \leq r$, the $(d_1+\dots+d_\ell)$th coordinate of the $\Log_{E'_\bu}(\bv_\bu)$ equals $(-1)^{r-\ell} \Gamma_{s_\ell} \dots \Gamma_{s_r} \Li^*_{(s_r,\dots,s_\ell)}(u_r,\dots,u_\ell)$.
\end{theorem}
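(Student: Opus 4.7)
The plan is to deduce this theorem as a direct specialization of Theorem \ref{theorem: AT model} (or equivalently of Theorem \ref{theorem: main theorem}) to the setting $\fQ = \bu = (u_1,\ldots,u_r) \in \overline{K}^r$, where the coefficients of $\fQ$ are now constants rather than general polynomials in $t$. The text preceding the theorem has already carried out the bulk of the required preparation: the matrix $\Psi_\bu$ and its inverse $\Upsilon_\bu$ have been written down explicitly, the point $\bv_\bu$ has been identified, and the hypotheses of Theorem \ref{theorem: main theorem} have been checked (namely $\Psi_\bu \in \mathrm{GL}_{r+1}(\bT)$, so one takes $F = 1$, and $(\Psi_\bu)_{r+1}^{(k)}\to 0$ in $\Mat_{1\times r}(\bT)$ by \cite[Lemma 5.3.1 and Theorem 5.5.2]{Cha14}).

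First I would invoke Theorem \ref{theorem: main theorem}(a) to obtain the Stark logarithmic identity $\Log_{E'_\bu}^{\mathrm{St}}(\bv_\bu) = \delta_0(\bff_\bu^\top - (\Upsilon_\bu)_{r+1}^\top)$. Since $\bff_\bu = (0,\dots,0,u_r^{(-1)}(t-\theta)^{s_r})$ satisfies $\delta_0(\bff_\bu^\top) = 0$, the right-hand side is just $\delta_0(-(\Upsilon_\bu)_{r+1}^\top)$. Reading off the last column of the displayed matrix $\Upsilon_\bu$, this is precisely the column vector
\begin{align*}
\begin{pmatrix}
(-1)^{r-1}\fLis_{(s_r,\ldots,s_1)}(t;u_r,\ldots,u_1)\Omega^{-(s_1+\cdots+s_r)} \\
(-1)^{r-2}\fLis_{(s_r,\ldots,s_2)}(t;u_r,\ldots,u_2)\Omega^{-(s_2+\cdots+s_r)} \\
\vdots \\
\fLis_{s_r}(t;u_r)\Omega^{-s_r}
\end{pmatrix},
\end{align*}
after the sign flip, exactly matching the displayed formula.

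Next I would upgrade this Stark logarithmic identity to a genuine logarithmic identity (not merely a split one). The key point is that because $\bu$ has constant coefficients and satisfies $|u_r|_\infty < |\theta|_\infty^{s_r q/(q-1)}$ and $|u_i|_\infty \leq |\theta|_\infty^{s_i q/(q-1)}$ for $i < r$, the explicit entries of $\bv_\bu$ (namely $\pm u_r\cdots u_\ell$ in specified coordinates and $0$ elsewhere) satisfy the size bounds placed on $\bu_i$ in Theorem \ref{theorem: main theorem}(b), so in fact $\bv_\bu$ lies inside the radius of convergence of $\Log_{E'_\bu}$ by the computation of \cite[Lemma 4.2.1]{CGM19}. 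This turns the split-logarithmic identity of Theorem \ref{theorem: AT model} into a literal logarithmic identity, establishing the first displayed equation of the theorem.

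Finally, for the ``in particular'' clause, I would use the description of $\delta_0$ in \eqref{D:delta0ATmodel}: the $(d_1+\cdots+d_\ell)$th coordinate of $\delta_0$ applied to an element of $\calM'_\bu\otimes_{\overline{K}[t]}\bT$ is the constant term of the Taylor expansion at $t = \theta$ of the $\ell$th component (in the basis $\bm$). Thus this coordinate equals the evaluation at $t = \theta$ of $(-1)^{r-\ell}\fLis_{(s_r,\ldots,s_\ell)}(t;u_r,\ldots,u_\ell)\Omega^{-(s_\ell+\cdots+s_r)}$. By the same specialization argument that yields \eqref{eq: MZV} (i.e.\ using $\Omega(\theta)^{-1} = \widetilde\pi$ together with the standard normalization converting $\fLis$ to $\Li^*$ at $t = \theta$, cf.\ \cite[Lemma 5.3.1]{Cha14}), this evaluation equals $(-1)^{r-\ell}\Gamma_{s_\ell}\cdots\Gamma_{s_r}\Li^*_{(s_r,\ldots,s_\ell)}(u_r,\ldots,u_\ell)$.

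There is essentially no substantive obstacle, as the heavy lifting has been done: the construction of $\Psi_\bu$ and $\Upsilon_\bu$, the verification of the convergence hypothesis, and the formal apparatus of Theorem \ref{theorem: main theorem} are all in place. The only mild subtlety is the convergence upgrade from $\Log^!$ to $\Log$, which rests on the size hypotheses on the $u_i$ and the algebraicity of the entries of $\bv_\bu$; the rest of the argument is bookkeeping with the signs produced by Lemma \ref{L:Lstarinv} and the explicit form of $\delta_0$ at $t = \theta$.
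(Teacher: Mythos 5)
Your proof matches the paper's approach for the main displayed identity: specialize Theorem~\ref{theorem: main theorem} to $\fQ = \bu$ using the precomputed $\Psi_\bu$, $\Upsilon_\bu$, the vanishing $\delta_0(\bff_\bu^\top)=0$, and the convergence hypothesis (already verified via \cite[Lemma 5.3.1]{Cha14}), then observe from the explicit form of $\bv_\bu$ that it lies inside the domain of convergence so that the split-logarithmic identity is actually a literal one (this is exactly the content of the remark the paper places after the theorem). One small slip: you say ``last column'' of $\Upsilon_\bu$ but $\Upsilon_{r+1}$ is the last \emph{row} (minus its final entry); the subsequent display is nonetheless correct. A more substantive concern is your justification for the ``in particular'' clause: you assert that evaluating $(-1)^{r-\ell}\fLis_{(s_r,\dots,s_\ell)}(t;u_r,\dots,u_\ell)\Omega^{-(s_\ell+\cdots+s_r)}$ at $t=\theta$ yields $(-1)^{r-\ell}\Gamma_{s_\ell}\cdots\Gamma_{s_r}\Li^*_{(s_r,\dots,s_\ell)}(u_r,\dots,u_\ell)$ ``by the same specialization argument that yields \eqref{eq: MZV}.'' But that argument produces the $\Gamma$-factors only because it uses $Q_i = H_{s_i}$ (Anderson--Thakur polynomials); for generic constants $u_i$, evaluating $\LL_i$ at $t=\theta$ gives $L_i$, so $\fLis(\theta;\cdot) = \Li^*(\cdot)$ with no $\Gamma$'s, and the factor $\Omega^{-(s_\ell+\cdots+s_r)}(\theta)$ contributes $\widetilde\pi^{\,s_\ell+\cdots+s_r}$, not $\Gamma_{s_\ell}\cdots\Gamma_{s_r}$. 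In other words, the displayed identity implies that the $(d_1+\cdots+d_\ell)$th coordinate is $(-1)^{r-\ell}\widetilde\pi^{\,s_\ell+\cdots+s_r}\Li^*_{(s_r,\dots,s_\ell)}(u_r,\dots,u_\ell)$; the $\Gamma$-factor appears to be a transcription slip from the adjacent Theorem~\ref{theorem: CPY model} (where $Q_i = H_{s_i}$ does make it correct). Your derivation step for this clause is therefore not actually valid as written, even though it reproduces the paper's stated formula.
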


\begin{remark}
From the explicit formula for the point $\bv_\bu$ we see that it lies in the domain of convergence of  $\Log_{E'_\bu}$. Hence the split-logarithmic identity is indeed an actual logarithmic identity.
\end{remark}


\section{Star dual $t$-motives and application to MZV's} \label{S:Log Int for MZVs}

We see in \S \ref{S:Log for And Thak Models} that the Anderson-Thakur dual $t$-motive does not directly give a logarithmic interpretation for MZV's. In \cite{CGM19,CM19b} Chang, Green and Mishiba found a solution for this problem. Their method consisted of two steps. First, they find a logarithmic interpretation for Carlitz star multiple polylogarithms (see Theorem \ref{theorem: CM model}, also \cite{CM19}), then they form a linear combination of these polylogarithms which results in a MZV using the theory of fiber coproducts of $t$-motives (see \cite{CGM19,CM19b}). They raised the question whether one could find a more direct way to obtain a logarithmic interpretation for MZV's (see \cite[\S 1.4]{CM19b}).

In this section we give an affirmative answer to the above question of Chang and Mishiba and propose another logarithmic interpretation for MZV's which is much more direct. The key point is to introduce a new dual $t$-motive called the star dual $t$-motive so that MZV's are ``directly connected" to the associated $t$-module.

\subsection{Star dual $t$-motives and periods} ${}$\par

We always work with a tuple $\fs=(s_1,\dots,s_r) \in \mathbb N^r$ for $r \geq 1$. In what follows, we will specialize to $\fQ = (Q_1,\ldots,Q_r)=(H_{s_1},\ldots,H_{s_r})$ and keep the notation of \S \ref{sec: CPY}. 

\begin{remark}
We mention that all the results of this section still hold for any $\fQ=(Q_1,\ldots,Q_r) \in \overline K[t]^r$ satisfying the condition \eqref{eq: condition for Q}. The proofs can be adapted without modification.
\end{remark}

We set
\begin{align*}
\Phi^* &:=
\begin{pmatrix}
\Phi^*_{1,1} & &  \\
\vdots & \ddots & \\
\Phi^*_{r+1,1} & \dots & \Phi^*_{r+1,r+1}
\end{pmatrix} \in \Mat_{r+1}(\overline K[t])
\end{align*}
where for $1 \leq \ell \leq j \leq r+1$, 
\begin{equation} \label{eq: Phi star}
\Phi^*_{j,\ell}=(-1)^{j-\ell} \prod_{\ell \leq k <j} Q_k^{(-1)} (t-\theta)^{s_\ell+\dots+s_r}.
\end{equation}
We also set
\begin{align*}
\Phi'^* &:=
\begin{pmatrix}
\Phi^*_{1,1} & &  \\
\vdots & \ddots & \\
\Phi^*_{r,1} & \dots & \Phi^*_{r,r}
\end{pmatrix} \in \Mat_r(\overline K[t]).
\end{align*}
Let $\calM^*$ and $\calM'^*$ be the dual $t$-motives defined by $\Phi^*$ and $\Phi'^*$ respectively.  We define
\begin{align*}
\Psi^* &:=
\begin{pmatrix}
\Omega^{(s_1+\dots+s_r)} & 0 & 0 & \dots & 0 \\
-\mathfrak L^*(s_1) \Omega^{(s_2+\dots+s_r)} & \Omega^{(s_2+\dots+s_r)} & 0 & \dots & 0 \\
\vdots & -\mathfrak L^*(s_2) \Omega^{(s_3+\dots+s_r)} & \ddots & & \vdots \\
\vdots & & \ddots & \ddots & \vdots \\
(-1)^{r-1} \mathfrak L^*(s_1,\dots,s_{r-1}) \Omega^{s_r}  & (-1)^{r-2} \mathfrak L^*(s_2,\dots,s_{r-1}) \Omega^{s_r} & \dots & \Omega^{s_r}& 0 \\
(-1)^r \mathfrak L^*(s_1,\dots,s_r)  & (-1)^{r-1} \mathfrak L^*(s_2,\dots,s_r) & \dots & -\mathfrak L^*(s_r) & 1 
\end{pmatrix}.
\end{align*}
Then if we set $\Upsilon^*=(\Psi^*)^{-1}$, then we use Lemma \ref{L:Lstarinv} to get
\begin{align*}
\Upsilon^* &=
\begin{pmatrix}
\Omega^{-(s_1+\dots+s_r)} & 0 & 0 & \dots & 0 \\
\mathfrak L(s_1) \Omega^{-(s_1+\dots+s_r)} & \Omega^{-(s_2+\dots+s_r)} & 0 & \dots & 0 \\
\vdots & \mathfrak L(s_2) \Omega^{-(s_2+\dots+s_r)} & \ddots & & \vdots \\
\vdots & & \ddots & \ddots & \vdots \\
\mathfrak L(s_{r-1},\dots,s_1) \Omega^{-(s_1+\dots+s_r)}  & \mathfrak L(s_{r-1},\dots,s_2) \Omega^{-(s_2+\dots+s_r)} & \dots & \Omega^{-s_r} & 0 \\
\mathfrak L(s_r,\dots,s_1) \Omega^{-(s_1+\dots+s_r)} & \mathfrak L(s_r,\dots,s_2) \Omega^{-(s_2+\dots+s_r)} & \dots & \mathfrak L(s_r) \Omega^{-s_r} & 1 
\end{pmatrix}.
\end{align*}
Note that $\Psi^*$ and $\Upsilon^*$ belongs to $\text{GL}_{r+1}(\bT)$. Further, by Lemma \ref{lemma: equality star} we obtain
	\[ \Psi^{* (-1)}=\Phi^* \Psi^*. \]
	
\begin{lemma}\label{L:MZV and L series}
The value at $t=\theta$ of the last line of $\Upsilon^*$ is 
	\[ (\Gamma_{s_1} \dots \Gamma_{s_r} \zeta_A(s_r,\dots,s_1),\Gamma_{s_2} \dots \Gamma_{s_r} \zeta_A(s_r,\dots,s_2),\dots,\Gamma_{s_r} \zeta_A(s_r),1). \]
\end{lemma}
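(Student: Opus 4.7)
My plan is to read the last row of $\Upsilon^*$ entry-by-entry and match each entry against the MZV formula \eqref{eq: MZV}. The last row is
\[
\bigl( \mathfrak L(s_r,\dots,s_1)\,\Omega^{-(s_1+\dots+s_r)},\ \mathfrak L(s_r,\dots,s_2)\,\Omega^{-(s_2+\dots+s_r)},\ \dots,\ \mathfrak L(s_r)\,\Omega^{-s_r},\ 1 \bigr),
\]
so for each $1 \leq \ell \leq r$ it suffices to prove the identity
\[
\bigl[ \mathfrak L(s_r,\dots,s_\ell)\,\Omega^{-(s_\ell+\dots+s_r)} \bigr](\theta) \;=\; \Gamma_{s_\ell}\cdots\Gamma_{s_r}\, \zeta_A(s_r,\dots,s_\ell),
\]
the final entry ($1$) being trivial.

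The only subtle point is that \eqref{eq: MZV} is stated in the paper for arguments with increasing subscripts $(s_\ell,\ldots,s_{j-1})$, whereas the entries above have arguments in decreasing order $(s_r,\ldots,s_\ell)$. The remedy is to observe that both the series $\mathfrak L$ (defined in \eqref{eq: series L general}) and the MZV $\zeta_A$ depend only on the \emph{ordered tuple of positive integer arguments}, with each Anderson--Thakur polynomial $H_s$ attached intrinsically to the corresponding exponent $s$. Consequently the identity
\[
\bigl[ \mathfrak L(a_1,\ldots,a_n)\,\Omega^{-(a_1+\cdots+a_n)} \bigr](\theta) \;=\; \Gamma_{a_1}\cdots\Gamma_{a_n}\, \zeta_A(a_1,\ldots,a_n)
\]
holds verbatim for any tuple $(a_1,\ldots,a_n)\in \mathbb N^n$, by the same Anderson--Thakur argument (\cite[5.5.3]{Cha14}) that underlies \eqref{eq: MZV}.

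To finish, I would apply this more general form to the reversed tuple $(a_1,\ldots,a_{r-\ell+1}):=(s_r,s_{r-1},\ldots,s_\ell)$ for each $\ell$, which immediately yields the desired equality for the corresponding entry in the last row of $\Upsilon^*$. There is no serious obstacle; the only thing worth checking is the insensitivity of the formula to the ordering of the arguments, which is clear from the proof of \eqref{eq: MZV} via the Anderson--Thakur interpolation formula relating $(\Omega^s H_s)^{(d)}(\theta)$ to the partial sums $\sum_{a\in A_{+,d}} a^{-s}$.
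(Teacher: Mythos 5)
Your proof is correct and takes essentially the same route as the paper, which simply cites \eqref{eq: MZV}. Your extra observation that \eqref{eq: MZV} applies to any ordered tuple (because $H_s$ is intrinsically attached to $s$, so the Anderson--Thakur--Chang formula \cite[5.5.3]{Cha14} is insensitive to reversing the arguments) is exactly the small point the paper leaves implicit, and it is worth noting.
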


\begin{proof}
This follows immediately from Equality \eqref{eq: MZV}.
\end{proof}

Let $\bm=\{m_1,\dots,m_r\}$ be the $\overline K[t]$-basis of $\calM'^*$ with respect to the action of $\sigma$ represented by $\Phi'^*$. It is not hard to check that $\calM'^*$ is a free left $\overline K[\sigma]$-module of rank $d=(s_1+\dots+s_r)+(s_2+\dots+s_r)+ \dots + s_r$ and
	\[ \bw=\{w_1,\ldots,w_d\}:=\{(t-\theta)^{s_1+\dots+s_r-1} m_1,\dots, m_1,\dots,(t-\theta)^{s_r-1} m_r,\dots,m_r \} \]
is a $\overline K[\sigma]$-basis of $\calM'^*$. We further observe that $(t-\theta)^\ell \calM'^*/\sigma \calM'^* =(0)$ for $\ell \gg 0$. 

We denote by $E'^*$ the $t$-module defined by the dual $t$-motive $\calM'^*$ given by the matrix $\Phi'^*$. We can write down explicitly the maps $\delta_0:\calM'^* \to \Mat_{d \times 1}(\overline K)$ and $\delta_1:\calM'^* \to \Mat_{d \times 1}(\overline K)$. For the convenience of the reader we present the former map which is the same as that for the Anderson-Thakur dual $t$-motives. Let $m \in \calM'^*=\overline K[t]m_1+\dots+\overline K[t]m_r$. Then we can write
\begin{equation}\label{E:m in sigma basis}
m=\sum_{\ell=1}^r (c_{d_\ell-1,\ell} (t-\theta)^{d_\ell-1}+\dots+c_{0,\ell}+F_\ell(t) (t-\theta)^{d_\ell}) m_\ell,
\end{equation}
with $c_{i,\ell} \in \overline K$ and $F_\ell(t) \in \overline K[t]$. Then
\begin{equation}\label{D:delta0starmodel}
\delta_0(m):=(c_{d_1-1,1},\dots,c_{0,1},\dots,c_{d_r-1,r},\dots,c_{0,r})^\top.
\end{equation}

\subsection{Integrality properties} ${}$\par
Next, we consider 
	\[ \alpha(\calM^*) = \Phi^*_{r+1,1} m_1 + \dots + \Phi^*_{r+1,r} m_r \in \calM'^*/(\sigma-1)\calM'^* \] which corresponds to a certain point $\bv^*_\fs:= \delta_1(\alpha(\calM^*)) \in E'^*(\overline K)$. 
	
In this section we prove integrality properties of the Anderson $t$-module $E'^*$ and the point $\bv^*_\fs \in E'(\overline K)$ which will be used later to deduce a logarithmic interpretation for $\nu$-adic MZV's from that for MZV's (see Theorem \ref{theorem: nu adic}). Our result is inspired by \cite[Theorem 5.3.4]{CPY19} (see Theorem \ref{theorem: CPY integral}). Indeed, its proof can be adapted without much modification. For the convenience of the reader we write it down completely below.

\begin{proposition} \label{prop: integral}
Recall that $\bw=\{w_1,\dots,w_d\}$ denotes the $\overline K[\sigma]$-basis
	\[ \{(t-\theta)^{s_1+\dots+s_r-1} m_1,\dots,(t-\theta) m_1, m_1,\dots,(t-\theta)^{s_r-1} m_r,\dots,(t-\theta) m_r,m_r \} \]
of $\calM'^*$. Let $\Xi$ be the set of all the elements of $\calM'^*$ of the form $\sum_{i=1}^d h_i w_i$ where $h_i=\sum_n \sigma^n u_{n,j}$ with $u_{n,j} \in A$. 

Then for $g \in A[t]$ and $1 \leq \ell \leq r$, we have $g m_\ell \in \Xi$. 
\end{proposition}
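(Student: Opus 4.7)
The plan is to prove by a double induction, outer on $\ell$ and inner on the $(t-\theta)$-degree $k$, that $(t-\theta)^k m_\ell \in \Xi$ for every $k \geq 0$ and every $1 \leq \ell \leq r$. Since $t = \theta + (t-\theta)$ with $\theta \in A$, one has $A[t] = A[(t-\theta)]$, so this will immediately give $A[t]\,m_\ell \subset \Xi$ once I verify that $\Xi$ is closed under addition and under left multiplication by elements of $A$. Closure under addition is built into the definition, and for $c \in A$ the commutation $c\sigma^n = \sigma^n c^{q^n}$ in $\overline K[\sigma]$ (which follows from $\sigma c = c^{1/q}\sigma$) converts $c \cdot \sum_n \sigma^n u_{n,i}$ into $\sum_n \sigma^n (c^{q^n} u_{n,i})$ with $c^{q^n} u_{n,i} \in A$. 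The same computation shows $\sigma \cdot \Xi \subset \Xi$.

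The backbone of the argument is the structural identity
\begin{equation*}
(t-\theta)^{d_\ell} m_\ell = \sigma\bigl(m_\ell + R_\ell\bigr), \qquad R_\ell \in \sum_{i<\ell} A[t]\,m_i,
\end{equation*}
which I would prove by induction on $\ell$. Starting from $\sigma m_\ell = \sum_{i\leq \ell} \Phi^*_{\ell,i}\,m_i$, one rearranges to
\begin{equation*}
(t-\theta)^{d_\ell} m_\ell = \sigma m_\ell + \sum_{i<\ell} (-1)^{\ell-i+1} \Bigl(\prod_{i\leq k<\ell} Q_k^{(-1)}\Bigr)(t-\theta)^{d_i}\,m_i.
\end{equation*}
Substituting the inductive decomposition $(t-\theta)^{d_i} m_i = \sigma(m_i+R_i)$ for $i<\ell$ and applying the relation $Q_k^{(-1)}\sigma = \sigma Q_k$ iteratively (a consequence of $\sigma c = c^{1/q}\sigma$ applied to each coefficient of $Q_k$, together with $\sigma t = t\sigma$) converts the operator $\prod_{i\leq k<\ell} Q_k^{(-1)} \cdot \sigma$ into $\sigma \cdot \prod_{i\leq k<\ell} Q_k$. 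Since $Q_k = H_{s_k} \in A[t]$, the factor sitting inside the $\sigma$ now lies in $\sum_{j\leq i} A[t]\,m_j$, which is exactly what the induction needs.

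With this lemma in hand the main double induction runs cleanly. For $k < d_\ell$, the vector $(t-\theta)^k m_\ell$ is literally one of the basis elements $w_i$, hence already in $\Xi$. For $k \geq d_\ell$, I would factor
\begin{equation*}
(t-\theta)^k m_\ell = (t-\theta)^{k-d_\ell}\,\sigma(m_\ell + R_\ell) = \sigma\,(t-\theta^q)^{k-d_\ell}(m_\ell + R_\ell),
\end{equation*}
using the commutation $(t-\theta)\sigma = t\sigma - \theta\sigma = \sigma(t-\theta^q)$ iterated. The polynomial $(t-\theta^q)^{k-d_\ell}$ has coefficients in $A$ and $t$-degree strictly smaller than $k$; expanding it in powers of $(t-\theta)$ yields an $A$-combination of terms $(t-\theta)^j m_\ell$ with $j<k$, which lie in $\Xi$ by the inner induction, together with terms $(t-\theta)^j R_\ell$, which lie in $\Xi$ by the outer induction since $R_\ell \in \sum_{i<\ell} A[t]\,m_i$. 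Thus $(t-\theta^q)^{k-d_\ell}(m_\ell + R_\ell) \in \Xi$, and applying $\sigma$ preserves $\Xi$, so $(t-\theta)^k m_\ell \in \Xi$, closing the induction.

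The principal obstacle is precisely that the off-diagonal entries of $\Phi^*$ contain the twisted polynomials $Q_k^{(-1)}$, whose coefficients lie in $A^{1/q}$ rather than $A$; the structural lemma is designed exactly so that each $Q_k^{(-1)}$ gets paired with a $\sigma$ on its right, after which $Q_k^{(-1)}\sigma = \sigma Q_k$ restores integrality. A secondary bookkeeping point is that commuting $\sigma$ past $(t-\theta)$ replaces $\theta$ by $\theta^q$, which happily remains in $A$, so integrality is not lost during the inner induction either.
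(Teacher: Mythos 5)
Your proof is correct and follows essentially the same route as the paper's: the same key lemma $(t-\theta)^{d_\ell}m_\ell = \sigma(m_\ell + R_\ell)$ with $R_\ell \in \sum_{i<\ell} A[t]\,m_i$, proved by the same $\ell$-induction exploiting $Q_k^{(-1)}\sigma = \sigma Q_k$, followed by the same double induction to conclude. The only cosmetic difference is that the paper reduces the degree by dividing $g$ by $(t-\theta)^{d_\ell}$ while you commute $(t-\theta)^{k-d_\ell}$ past $\sigma$ to $(t-\theta^q)^{k-d_\ell}$, which is the same degree-drop in slightly different clothing.
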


\begin{proof}
Recall that we have put $d_\ell=s_\ell+\dots+s_r$ for $1 \leq \ell \leq r$. We claim that there exist polynomials $g_{\ell,1},\dots,g_{\ell,\ell-1} \in A[t]$ such that 
	\[ (t-\theta)^{d_\ell} m_\ell=\sigma (g_{\ell,1} m_1+\dots+g_{\ell,\ell-1} m_{\ell-1}+m_\ell). \]
The proof is by induction on $\ell$. For $\ell=1$, we have $(t-\theta)^{d_1} m_1=\sigma m_1$, and the claim is clear. Suppose that we have proved the claim for $1 \leq i < \ell$. We now show that the claim is true for $\ell$. In fact, since $\sigma m_\ell=Q_{\ell,1}^{* (-1)} (t-\theta)^{d_1} m_1+\dots+Q_{\ell,\ell-1}^{*(-1)} (t-\theta)^{d_{\ell-1}} m_{\ell-1}+(t-\theta)^{d_\ell} m_\ell$ for explicit polynomials $Q_{\ell,j}^* \in A[t]$ given in \eqref{eq: Phi star}, we get
	\[ (t-\theta)^{d_\ell} m_\ell=\sigma m_\ell-Q_{\ell,1}^{* (-1)} (t-\theta)^{d_1} m_1-\dots-Q_{\ell,\ell-1}^{*(-1)} (t-\theta)^{d_{\ell-1}} m_{\ell-1}. \]
By induction it follows that
\begin{align*}
(t-\theta)^{d_\ell} m_\ell &=\sigma m_\ell-Q_{\ell,1}^{* (-1)} (t-\theta)^{d_1} m_1-\dots-Q_{\ell,\ell-1}^{*(-1)} (t-\theta)^{d_{\ell-1}} m_{\ell-1} \\
&=\sigma m_\ell - \sum_{i=1}^{\ell-1} Q_{\ell,i}^{* (-1)} \sigma (g_{i,1} m_1+\dots+g_{i,i-1} m_{i-1}+m_i) \\
&=\sigma m_\ell - \sum_{i=1}^{\ell-1} \sigma Q_{\ell,i}^* (g_{i,1} m_1+\dots+g_{i,i-1} m_{i-1}+m_i) \\
&=\sigma (m_\ell - \sum_{i=1}^{\ell-1} Q_{\ell,i}^* (g_{i,1} m_1+\dots+g_{i,i-1} m_{i-1}+m_i)).
\end{align*}
The proof of the claim is now complete.

We are now ready to show by induction on $\ell$ that for $g \in A[t]$, we have $g m_\ell \in \Xi$. We first assume that $\ell=1$. We show by induction on the degree of $g$ that $g m_1 \in \Xi$. It is clear that if $\deg g=0$, then the claim is true. Let $g \in A[t]$ with $\deg g>0$. We divide $g$ by $(t-\theta)^{d_1}$ and write 
	\[ g=h(t-\theta)^{d_1}+\sum_{j=0}^{d_1-1} a_j (t-\theta)^j \] 
with $h \in A[t]$ and $a_0,\dots,a_{d_1-1} \in A$. Since $\sigma m_1=(t-\theta)^{d_1} m_1$, it follows that
\begin{align*}
g m_1 &=h(t-\theta)^{d_1} m_1+\sum_{j=0}^{d_1-1} a_j (t-\theta)^j m_1 \\
&= \sigma h^{(1)} m_1 + \sum_{j=0}^{d_1-1} a_j (t-\theta)^j m_1.
\end{align*} 
Since $\deg h<\deg g$, by induction, $h^{(1)} m_1 \in \Xi$. Since $a_0,\dots,a_{d_1-1} \in A$, the sum $\sum_{j=0}^{d_1-1} a_j (t-\theta)^j m_1$ belongs to $\Xi$. Hence we conclude that $g m_1 \in \Xi$.

Now we consider $1< \ell \leq r$ and suppose that $g m_i \in \Xi$ for $1 \leq i < \ell$. We show by induction that $g m_\ell \in \Xi$. We divide $g$ by $(t-\theta)^{d_\ell}$ and write 
	\[ g=h(t-\theta)^{d_\ell}+r, \quad \text{with } h,r \in A[t] \text{ and } \deg r<d_\ell. \] 
We have seen that there exist polynomials $g_{\ell,1},\dots,g_{\ell,\ell-1} \in A[t]$ such that 
	\[ (t-\theta)^{d_\ell} m_\ell=\sigma (g_{\ell,1} m_1+\dots+g_{\ell,\ell-1} m_{\ell-1}+m_\ell). \]
It follows that
\begin{align*}
g m_\ell &=h(t-\theta)^{d_\ell} m_\ell+r m_\ell \\
&= \sigma h^{(1)} (g_{\ell,1} m_1+\dots+g_{\ell,\ell-1} m_{\ell-1}+m_\ell) +r m_\ell \\
&= \sigma h^{(1)} m_\ell + \sigma h^{(1)} (g_{\ell,1} m_1+\dots+g_{\ell,\ell-1} m_{\ell-1}) + r m_\ell.
\end{align*} 
The first and second terms belong to $\Xi$ by induction. Since $r \in A[t]$ and $\deg r<d_\ell$, the last term also belongs to $\Xi$. We conclude that $g m_\ell \in \Xi$ and the proof is finished.
\end{proof}

We prove an analogue version of \cite[Theorem 5.3.4]{CPY19} (see Theorem \ref{theorem: CPY integral}):
\begin{proposition}
1) The $t$-module $E'^*$ is defined over $A$.

2) The point $\bv^*_\fs$ is an integral point in $E'^*(A)$.
\end{proposition}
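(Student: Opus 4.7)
The plan is to apply Proposition \ref{prop: integral} (just proved above) to each part, in a fashion parallel to \cite[Theorem 5.3.4]{CPY19}. The role of Proposition \ref{prop: integral} is to convert $A[t]$-valued expressions in the $m_\ell$'s into elements of $\Xi$, whose $\sigma$-coefficients lie in $A$; these are precisely the elements whose image under $\delta_1$ lies in $A^d$.

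\textbf{Part (1).} It suffices to show $E'^*_\theta \in \Mat_d(A)[\tau]$, since $E'^*$ is generated as an $\Fq$-algebra by its value at $\theta$. Via the commutative diagram in Section \ref{sec: dual motives}, I will compute $E'^*_\theta$ by evaluating $\delta_1(t w_j)$ for each basis element $w_j = (t-\theta)^i m_\ell$. Since $t(t-\theta)^i \in A[t]$, Proposition \ref{prop: integral} supplies an expansion $t w_j = \sum_{k,n} u_{n,jk} \sigma^n w_k$ with $u_{n,jk} \in A$. Reading off the matrix coefficients of $E'^*_\theta$ from this expansion (using the twisting rule $r \sigma^n = \sigma^n r^{q^n}$ to handle scalars $v_j \in \overline K$) then produces entries in $A$, both in $d[\theta]$ and in each $E'^*_{\theta,n}$.

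\textbf{Part (2).} The main step is to simplify $\alpha(\calM^*) = \sum_{\ell=1}^r \Phi^*_{r+1,\ell} m_\ell$ before applying Proposition \ref{prop: integral}. Expanding $Q_r^{(-1)} \sigma m_r$ by means of the $r$-th row of $\Phi^*$ and comparing term by term with the formula for $\alpha(\calM^*)$, the contributions involving $m_1,\dots,m_{r-1}$ cancel pairwise, yielding
\begin{equation*}
\alpha(\calM^*) \;=\; -Q_r^{(-1)} \sigma m_r \;=\; -\sigma(Q_r m_r),
\end{equation*}
where the last equality uses the identity $Q_r^{(-1)} \sigma = \sigma Q_r$ in $\overline K[t,\sigma]$. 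Since $Q_r = H_{s_r} \in A[t]$, Proposition \ref{prop: integral} gives $Q_r m_r \in \Xi$; and it is immediate from the definition of $\Xi$ that $\sigma \Xi \subseteq \Xi$ (applying $\sigma$ to $\sum_n \sigma^n u_{n,i}$ merely shifts the index while keeping coefficients in $A$). Hence $\alpha(\calM^*) \in \Xi$, and applying $\delta_1$ gives $\bv^*_\fs \in A^d$.

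\textbf{The main obstacle} is spotting the collapse $\alpha(\calM^*) = -\sigma(Q_r m_r)$, which depends on a cancellation specific to the ``star'' shape of $\Phi^*$ and is not evident from the defining formulas. Everything else — in particular the Frobenius-twist bookkeeping when passing scalars across $\sigma$ — is routine, since $A$ is stable under $q^n$-th powers.
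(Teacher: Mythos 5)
Your Part (1) matches the paper essentially verbatim: apply Proposition \ref{prop: integral} with $g=t$ to conclude $t w_j \in \Xi$, which by definition has $\sigma$-coordinates in $A$, hence the matrix coefficients of $E'^*_\theta$ (read off via $\delta_1$) lie in $A$.

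Your Part (2) is correct but takes a cleaner route than the paper's. The paper treats the terms $\Phi^*_{r+1,\ell} m_\ell$ one at a time: for each $\ell$ it invokes the intermediate claim from the proof of Proposition \ref{prop: integral} that $(t-\theta)^{d_\ell} m_\ell = \sigma(g_{\ell,1}m_1+\dots+g_{\ell,\ell-1}m_{\ell-1}+m_\ell)$ with $g_{\ell,j}\in A[t]$, then writes $\Phi^*_{r+1,\ell}m_\ell = \sigma Q^*_{r+1,\ell}(g_{\ell,1}m_1+\dots+m_\ell) \in \Xi$. You instead exploit the factorization $\Phi^*_{r+1,\ell} = -Q_r^{(-1)}\Phi^*_{r,\ell}$ (which is what forces the identity $\alpha(\calM^*) = -Q_r^{(-1)}\sigma m_r = -\sigma(Q_r m_r)$; calling it a ``pairwise cancellation'' is slightly off — it is really a term-by-term identification — but the conclusion is right). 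This reduces Part (2) to the single fact $Q_r m_r\in\Xi$ plus the trivial observation $\sigma\Xi\subseteq\Xi$. Incidentally, the collapse you use is exactly Equation \eqref{eq: w_s}, which the paper only derives later in the proof of Theorem \ref{theorem: star model}; your argument shows it is already useful here and avoids re-deriving, inside the proposition, the intermediate output of the proof of Proposition \ref{prop: integral}. Both approaches are valid; yours is a bit shorter and stays at the level of the statement of Proposition \ref{prop: integral} rather than its proof.
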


\begin{proof}
1) We keep the notation of Proposition \ref{prop: integral}. By Proposition \ref{prop: integral} we see that for $1 \leq i \leq d$, we have $t w_i \in \Xi$ which means $t w_i=\sum_{i=1}^d h_i w_i$ for some $h_i=\sum_n \sigma^n u_{n,j}$ with $u_{n,j} \in A$. Thus $E'^*$ is defined over $A$.

2) We remark that $\delta_1(\Xi) \in E'^*(A)$. Since $\bv^*_\fs=\delta_1(\Phi^*_{r+1,1} m_1 + \dots + \Phi^*_{r+1,r} m_r)$, it is sufficient to see that all the termes $\Phi^*_{r+1,1} m_1,\dots,\Phi^*_{r+1,r} m_r$ belongs to $\Xi$. In fact, let $1 \leq \ell \leq r$, by the proof of Proposition \ref{prop: integral} there exist polynomials $g_{\ell,1},\dots,g_{\ell,\ell-1} \in A[t]$ such that 
	\[ (t-\theta)^{d_\ell} m_\ell=\sigma (g_{\ell,1} m_1+\dots+g_{\ell,\ell-1} m_{\ell-1}+m_\ell). \]
Recall that $\Phi^*_{r+1,\ell}=Q^{*(-1)}_{r+1,\ell} (t-\theta)^{d_\ell}$ for an explicit polynomial $Q^*_{r+1,\ell} \in A[t]$ given in \eqref{eq: Phi star}. This implies that
\begin{align*}
\Phi^*_{r+1,\ell} m_\ell &=Q^{*(-1)}_{r+1,\ell} (t-\theta)^{d_\ell} m_\ell \\
&= \sigma Q^*_{r+1,\ell}(g_{\ell,1} m_1+\dots+g_{\ell,\ell-1} m_{\ell-1}+m_\ell).
\end{align*}
We conclude that $\Phi^*_{r+1,\ell} m_\ell \in \Xi$. The proof is complete.
\end{proof}

\subsection{Logarithm coefficients} ${}$\par

The coefficients of the logarithm series can be calculated following \cite{ANDTR20a}. We set
\begin{align*}
\Theta^*:=((\Phi^*)^{-1})^\top &=
\begin{pmatrix}
\Theta^*_{1,1} & \dots & \Theta^*_{1,r+1}  \\
& \ddots & \vdots \\
& & \Theta^*_{r+1,r+1}
\end{pmatrix} \in \Mat_{r+1}(\overline K(t))
\end{align*}
where 
	\[\Theta^*_{i,i}=\frac{1}{(t-\theta)^{s_i+\dots+s_r}} \] 
and for $1 \leq i < r+1$,
	\[ \Theta^*_{i,i+1}=\frac{Q_i^{(-1)}}{(t-\theta)^{s_{i+1}+\dots+s_r}}. \]
The other coefficients $\Theta^*_{i,j}$ vanish. 

We set
\begin{align*}
\Theta'^*:=((\Phi'^*)^{-1})^\top &=
\begin{pmatrix}
\Theta^*_{1,1} & \dots & \Theta^*_{1,r}  \\
& \ddots & \vdots \\
& & \Theta^*_{r,r}
\end{pmatrix} \in \Mat_r(\overline K(t)).
\end{align*}
If we write 
	\[ \Log_{E'^*}=\sum_{n \geq 0} P^*_n \tau^n, \]
then by \cite[Proposition 2.2]{ANDTR20a}, for $n \geq 0$, the $n$th coefficient of the logarithm series of $E^*$ evaluated at $\bv \in \overline K^d$ is given by
	\[ P^*_n \bv\twistk{n} = \delta_0(\Theta'^{*(1)} \dots \Theta'^{*(n)}\iota\inv (\bv)\twistk{n}). \]

\subsection{Logarithmic interpretations for MZV's} ${}$\par \label{sec:MZV}

Note that 
\begin{align*}
\bff^* &=((-1)^rQ_1^{(-1)} \dots Q_r^{(-1)} (t-\theta)^{s_1+\dots+s_r},\dots,-Q_r^{(-1)} (t-\theta)^{s_r}), \\
\Psi^*_{r+1} &=((-1)^r \mathfrak L^*(s_1,\dots,s_r),(-1)^{r-1} \mathfrak L^*(s_2,\dots,s_r),\dots,-\mathfrak L^*(s_r)).
\end{align*}
In particular, they verify the hypothesis of Theorem \ref{theorem: main theorem} by \cite[Lemma 5.3.1]{Cha14}. 

Theorem \ref{theorem: main theorem} implies the following theorem:
\begin{theorem} \label{theorem: star model}
Recall that for $1 \leq \ell \leq r$, we put $d_\ell:=s_\ell+\dots+s_r$. Then we have
\begin{align*}
\Log_{E'^*}^!(\bv^*_\fs)
=\delta_0 \begin{pmatrix}
- \mathfrak L(s_r,\dots,s_1) \Omega^{-(s_1+\dots+s_r)} \\
- \mathfrak L(s_r,\dots,s_2) \Omega^{-(s_2+\dots+s_r)} \\
\vdots \\
- \mathfrak L(s_r) \Omega^{-s_r} 
\end{pmatrix}.
\end{align*}

In particular, for $1 \leq \ell \leq r$, the $(d_1+\dots+d_\ell)$th coordinate of the $\Log_{E'^*}^!(\bv^*_\fs)$ equals $- \Gamma_{s_\ell} \dots \Gamma_{s_r} \zeta_A(s_r\dots,s_\ell)$.
\end{theorem}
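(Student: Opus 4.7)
The plan is to apply Theorem \ref{theorem: main theorem} directly to the star dual $t$-motive data already assembled in the section. First, I would read off the inputs: the bottom row of $\Phi^*$ gives
\[\bff^{*} = \bigl((-1)^r Q_1^{(-1)}\cdots Q_r^{(-1)}(t-\theta)^{s_1+\dots+s_r},\ \dots,\ -Q_r^{(-1)}(t-\theta)^{s_r}\bigr),\]
and the last row of $\Psi^*$ is $\Psi^*_{r+1} = ((-1)^r\mathfrak L^*(s_1,\dots,s_r),\dots,-\mathfrak L^*(s_r))$. Since $\Psi^* \in \mathrm{GL}_{r+1}(\bT)$ we take $F = 1$, and the required decay $\Psi^{*(k)}_{r+1} \to 0$ in $\Mat_{1\times r}(\bT)$ as $k\to\infty$ is exactly the content of \cite[Lemma 5.3.1]{Cha14} for each $\mathfrak L^*$-series. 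Applying Theorem \ref{theorem: main theorem}(a) thus yields the Stark logarithmic identity $\Log_{E'^*}^{\mathrm{St}}(\bv^*_\fs) = \delta_0(\bff^{*\top} - \Upsilon^{*\top}_{r+1})$. Since every entry of $\bff^{*\top}$ is divisible by $(t-\theta)^{s_r}$ (indeed, by $(t-\theta)^{d_\ell}$ in position $\ell$), we have $\delta_0(\bff^{*\top}) = 0$, so the right-hand side collapses to $-\delta_0(\Upsilon^{*\top}_{r+1})$, which is exactly the column vector in the statement.

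To upgrade to a split-logarithmic identity via Theorem \ref{theorem: main theorem}(b), I need to decompose $\alpha(\calM^*) = \sum_{\ell=1}^r \Phi^*_{r+1,\ell} m_\ell$ as a finite sum $\sum_i t^{n_i}\sigma^{\ell_i}\bigl(\sum_j u_{i,j} w_j\bigr)$ with each $\bu_i = (u_{i,j})$ in the convergence domain of $\Log_{E'^*}$. The inductive identity $(t-\theta)^{d_\ell}m_\ell = \sigma(g_{\ell,1}m_1 + \dots + g_{\ell,\ell-1}m_{\ell-1} + m_\ell)$ with explicit $g_{\ell,j} \in A[t]$ proved inside Proposition \ref{prop: integral} lets me rewrite each summand as
\[\Phi^*_{r+1,\ell} m_\ell = Q^{*(-1)}_{r+1,\ell}(t-\theta)^{d_\ell} m_\ell = \sigma\bigl(Q^*_{r+1,\ell}(g_{\ell,1}m_1 + \dots + g_{\ell,\ell-1}m_{\ell-1} + m_\ell)\bigr).\]
Expanding the polynomials $Q^*_{r+1,\ell}$ (which by \eqref{eq: Phi star} are $\Fq$-linear combinations of products $Q_\ell\cdots Q_r$ in $t$-monomials) and each $g_{\ell,j}$ as $\Fq$-linear combinations of $t^n$ puts the expression in exactly the required form.

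The main obstacle is verifying that the resulting $\bu_i$'s lie inside the radius of convergence of $\Log_{E'^*}$. Since $Q_i = H_{s_i}$ satisfies $\lVert H_{s_i} \rVert < |\theta|_\infty^{s_iq/(q-1)}$ by \cite[(3.7.3)]{AT90}, any coefficient arising from the product $Q_\ell\cdots Q_r$ has norm strictly less than $|\theta|_\infty^{(s_\ell+\dots+s_r)q/(q-1)}$. The matrix $\Theta'^*$ has the same structural form as in the Anderson--Thakur case (diagonal entries $1/(t-\theta)^{s_\ell+\dots+s_r}$, super-diagonal entries $Q_i^{(-1)}/(t-\theta)^{s_{i+1}+\dots+s_r}$), so the norm estimate of \cite[Lemma 4.2.1]{CGM19} transfers verbatim and confirms that each $\bu_i$ is in the domain of convergence. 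Theorem \ref{theorem: main theorem}(b), together with the vanishing $\delta_0(\bff^{*\top})=0$ noted above, then gives the announced split-logarithmic identity.

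Finally, to read off the coordinate formula, \eqref{D:delta0starmodel} says the $(d_1+\dots+d_\ell)$th coordinate of the column vector $\delta_0(-\mathfrak L(s_r,\dots,s_1)\Omega^{-(s_1+\dots+s_r)},\dots,-\mathfrak L(s_r)\Omega^{-s_r})^\top$ is the constant term at $t=\theta$ of the $\ell$th entry, namely $-[\mathfrak L(s_r,\dots,s_\ell)\Omega^{-(s_\ell+\dots+s_r)}](\theta)$. Applying \eqref{eq: MZV} to the tuple $(s_r,\dots,s_\ell)$ converts this into $-\Gamma_{s_\ell}\cdots\Gamma_{s_r}\zeta_A(s_r,\dots,s_\ell)$, which is the claim.
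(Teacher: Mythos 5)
Your overall strategy matches the paper's — apply Theorem \ref{theorem: main theorem}(b), read off $\bff^{*}$ and $\Psi^{*}_{r+1}$, take $F=1$, invoke \cite[Lemma 5.3.1]{Cha14} for the decay, and note $\delta_0(\bff^{*\top})=0$ — and the final coordinate computation via \eqref{D:delta0starmodel} and \eqref{eq: MZV} is the same. The real difference, and where your argument has a gap, is in the decomposition of $\alpha(\calM^*)$.

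The paper spots a crucial simplification: reading off $\sigma m_r$ from the bottom row of $\Phi'^{*}$ gives directly
\[
\alpha(\calM^*) \;=\; -Q_r^{(-1)}\sigma(m_r) \;=\; -\sigma(Q_r m_r),
\]
a single $\sigma$-term involving only $Q_r = H_{s_r}$. The convergence check for this single term is then immediate from $\lVert H_{s_r}\rVert < |\theta|_\infty^{s_r q/(q-1)}$ and the radius estimate inherited from \cite[Lemma 4.2.1]{CGM19}. You instead rewrite each $\Phi^{*}_{r+1,\ell}m_\ell$ separately as $\sigma\bigl(Q^{*}_{r+1,\ell}(g_{\ell,1}m_1+\dots+m_\ell)\bigr)$ using Proposition \ref{prop: integral}. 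That is a valid formal rewriting, but Proposition \ref{prop: integral} only asserts $g_{\ell,j}\in A[t]$ and gives \emph{no norm or degree bound} on the $g_{\ell,j}$'s. Your norm estimate controls only the factor $Q^{*}_{r+1,\ell}$ (a product $Q_\ell\cdots Q_r$), not the product $Q^{*}_{r+1,\ell}\,g_{\ell,j}$. A priori the $g_{\ell,j}$ could push the coefficients outside the radius of convergence, and you also need to track into which block (say block $j$, with threshold governed by $d_j$) each coefficient lands — a term $Q^{*}_{r+1,\ell}\,g_{\ell,j}$ sitting in block $j<\ell$ must beat the threshold $q^{d_j q/(q-1)}$, which the bound $\lVert Q_\ell\cdots Q_r\rVert < q^{d_\ell q/(q-1)}$ alone does not show. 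The argument can be repaired: unwinding the recursion in Proposition \ref{prop: integral} for the star model shows $g_{\ell,\ell}=1$, $g_{\ell,\ell-1}=Q_{\ell-1}$, and $g_{\ell,j}=0$ for $j<\ell-1$, after which the contributions telescope to exactly $-\sigma(Q_r m_r)$ and the norms do fall under the thresholds. But as written, you neither compute $g_{\ell,j}$ nor bound its norm, so the convergence verification is incomplete. Observing the telescoping identity $\alpha(\calM^*)=-\sigma(Q_r m_r)$ at the outset — as the paper does — is both shorter and closes this gap.
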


\begin{proof}
We first estimate the domain of convergence of $\Log_{E'^*}$.  We observe that the (lower triangular) matrix $(\Phi'^*)^{-1}$ above agrees with $\Phi'^{-1}$ from \cite[\S 4.2]{CGM19} along the main diagonal, and that the first subdiagonal agrees up to a factor of $(-1)$, while $(\Phi'^*)^{-1}_{i,j} = 0$ for all the other (below the subdiagonal) entries.  This allows us to use the degree estimates given in \cite[Proposition 4.1.3]{CGM19} for the matrix $\Phi'^{-1}$ for our logarithm series $\Log_{E'^*}$ for the matrix $(\Phi'^*)^{-1}$.  Indeed, following the notation in the proof of \cite[Proposition 4.1.3]{CGM19} we fix $\bw = (w_1, \dots, w_r)\in \Mat_{1\times r}(\C_\infty[t])$ with
\[w_i = y_{i,1}(t-\theta)^{d_{i}-1}+y_{i,2}(t-\theta)^{d_{i}-2} +\cdots+y_{i,d_{i}}, \quad y_{i,j}\in \C_\infty, \quad 1\leq i\leq r.\]
Then, we see that the degree estimates for the entries of $\bw \twistk{n} \prod_{1 \leq k \leq n} (\Phi'^{-1}) \twistk{n+1-k}$ coincide with estimates for our $\bw \twistk{n} \prod_{1 \leq k \leq n} ((\Phi'^*)^{-1}) \twistk{n+1-k}$ for each term which involves only diagonal or subdiagonal entries of $(\Phi'^*)^{-1}$.  On the other hand, each term which involves any other entry of $(\Phi'^*)^{-1}$ will be identically zero, since the sub-sub-diagonal coordinates of $(\Phi'^*)^{-1}$ are all zero.  Thus, the formula for the degree of the $\ell$th component of $\bw \twistk{n} \prod_{1 \leq k \leq n} ((\Phi'^*)^{-1}) \twistk{n+1-k}$ will be a subsum of the formula for the degree of the $\ell$th component of $\bw \twistk{n} \prod_{1 \leq k \leq n} (\Phi'^{-1}) \twistk{n+1-k}$.  In particular, the degree estimates for $\bw \twistk{n} \prod_{1 \leq k \leq n} (\Phi'^{-1}) \twistk{n+1-k}$ will also hold for $\bw \twistk{n} \prod_{1 \leq k \leq n} ((\Phi'^*)^{-1}) \twistk{n+1-k}$, since they are bounded above by the maximum of these terms in this sum (see \cite[Proposition 4.1.3]{CGM19} for more details).  This allows us to conclude using \cite[Lemma 4.2.1]{CGM19} that $\Log_{E'^*}(\by)$ converges as long as $\lVert Q_i \rVert_{1} \leq q^{\frac{s_i q}{q-1}}$ and $\by = (y_{1,1},\dots,y_{1,d_1},\dots,y_{r,1},\dots,y_{r,d_r})^\top$ satisfies the condition that $|y_{i,j}|_{\infty} < q^{j+\frac{d_i}{q-1}}$ for each $1 \leq i \leq r$ and $1 \leq j \leq d_i$.  To summarize, the radius of convergence of $\Log_{E'^*}$ is at least as large as that of $\Log_{E'}$.

Next, we turn to analyzing
\[\alpha(\calM) = (-1)^rQ_1^{(-1)} \cdots Q_r^{(-1)} (t-\theta)^{s_1+\dots+s_r}m_1 + \cdots + Q_{r-1}^{(-1)} Q_r^{(-1)} (t-\theta)^{s_{r-1}+s_r}m_{r-1} - Q_r^{(-1)} (t-\theta)^{s_r}m_r.\]
From the defining equation for $\Phi'^*$ we see that 
\[\sigma m_r =  (-1)^{r-1}Q_1^{(-1)} \cdots Q_{r-1}^{(-1)} (t-\theta)^{s_1+\dots+s_r}m_1 + \cdots - Q_{r-1}^{(-1)} (t-\theta)^{s_{r-1}+s_r}m_{r-1} + (t-\theta)^{s_r}m_r.\]
From this we conclude that 
\begin{equation} \label{eq: w_s}
\alpha(\calM) = -Q_r\twistinv \sigma(m_r)=-\sigma(Q_rm_r).
\end{equation}
From \cite[(3.7.3)]{AT90} we know that $\lVert Q_{i}\rVert = \lVert H_{s_i}\rVert < |\theta|_\infty ^{\tfrac{s_iq}{q-1}},$ so the conditions of Theorem \ref{theorem: main theorem} (b) are satisfied, which proves the first statement of the theorem.

The second statement of the theorem follows immediately from Lemma \ref{L:MZV and L series} and the definition of $\delta_0$.  However, for the convenience of the reader, we write down a direct proof of the second part of Theorem \ref{theorem: star model}.  We wish to prove that for $1 \leq \ell \leq r$, the $(d_1+\dots+d_\ell)$th coordinate of the $\Log_{E'^*}^!(\bv^*_\fs)$ denoted by $\nu_\ell$ equals to $- \Gamma_{s_\ell} \dots \Gamma_{s_r} \zeta_A(s_r\dots,s_\ell)$.

Recall that for $i \geq 0$, the $i$th coefficient of the logarithm series of $E'^*$ evaluated at $\bv \in \overline K^d$ is given by
	\[ P_i^* \bv\twistk{i} = \delta_0(\Theta'^{*(1)} \dots \Theta'^{*(i)}\iota\inv (\bv)\twistk{i}). \]
Let us denote the matrix $B_i = \Theta'^{*(1)} \dots \Theta'^{*(i)}$.  Then we quickly see that, $B_i[\ell j]=0$ if $\ell>j$ ($B[\ell j]$ denotes the $(\ell,j)$th entry of a matrix $B$). Further, if $\ell=j$, then
	\[ 	B_i[\ell j]=\frac{1}{\LL_i^{s_j+\dots+s_r}}, \]
where we recall the definition of $\LL_i$ from \eqref{D:LLdef}.  For $1 \leq \ell < j \leq r$, we get
	\[ B_i[\ell j]= \sum_{0 \leq i_\ell < \dots < i_{j-1}<i} \frac{Q_\ell^{(i_\ell)} \dots Q_{j-1}^{(i_{j-1})}}{\LL_{i_\ell}^{s_\ell} \dots \LL^{s_{j-1}}_{i_{j-1}} \LL_i^{s_j+\dots+s_r}}. \]
	
We wish to find the $(d_1 + \dots + d_\ell)$th coordinate of $\sum_{i \geq 0} P_i^* {\bv^*_\fs}^{(i)}$.  Recall that
\[\sum_{i \geq 0} P_i^* {\bv^*_\fs}^{(i)} = \sum_{i \geq 0}\delta_0(\Theta'^{*(1)} \dots \Theta'^{*(i)}\iota\inv (\bv^*_\fs)\twistk{i}),\]

We calculate that the $\ell$th coordinate of $B_i\iota\inv (\bv^*_\fs)\twistk{i} = B_i(f_1,\dots,f_r)^\top$ equals (here we understand $Q_\ell^{(i_\ell)} \dots Q_{j-1}^{(i_{j-1})}=1$ if $\ell=j$)
\begin{align*}
&\sum_{i \geq 0} \sum_{j=\ell}^r \sum_{0 \leq i_\ell < \dots < i_{j-1}<i} \frac{Q_\ell^{(i_\ell)} \dots Q_{j-1}^{(i_{j-1})}}{\LL_{i_\ell}^{s_\ell} \dots \LL^{s_{j-1}}_{i_{j-1}} \LL_i^{s_j+\dots+s_r}}  \left( (-1)^{r+1-j} \prod_{j \leq k <r+1} Q_k^{(-1)} (t-\theta)^{s_j+\dots+s_r} \right)^{(i)} \\
=&(-1)^{r+1-j} \prod_{j \leq k <r+1} Q_k^{(-1)} (t-\theta)^{s_j+\dots+s_r} \\
&+\sum_{i \geq 1} \sum_{j=\ell}^r \sum_{0 \leq i_\ell < \dots < i_{j-1}<i} \frac{Q_\ell^{(i_\ell)} \dots Q_{j-1}^{(i_{j-1})}}{\LL_{i_\ell}^{s_\ell} \dots \LL^{s_{j-1}}_{i_{j-1}} \LL_i^{s_j+\dots+s_r}}  \left( (-1)^{r+1-j} \prod_{j \leq k <r+1} Q_k^{(-1)} (t-\theta)^{s_j+\dots+s_r} \right)^{(i)}.
\end{align*}
As $\delta_0\left ((-1)^{r+1-j} \prod_{j \leq k <r+1} Q_k^{(-1)} (t-\theta)^{s_j+\dots+s_r}\right )=0$, we omit this term from our calculation and continue with
\begin{align*}
& \sum_{i \geq 1} \sum_{j=\ell}^r \sum_{0 \leq i_\ell < \dots < i_{j-1}<i} \frac{Q_\ell^{(i_\ell)} \dots Q_{j-1}^{(i_{j-1})}}{\LL_{i_\ell}^{s_\ell} \dots \LL^{s_{j-1}}_{i_{j-1}} \LL_i^{s_j+\dots+s_r}}  \left( (-1)^{r+1-j} \prod_{j \leq k <r+1} Q_k^{(-1)} (t-\theta)^{s_j+\dots+s_r} \right)^{(i)} \\
&=\sum_{i \geq 1} \sum_{j=\ell}^r \sum_{0 \leq i_\ell < \dots < i_{j-1} \leq i-1} (-1)^{r+1-j} \frac{Q_\ell^{(i_\ell)} \dots Q_{j-1}^{(i_{j-1})} \cdot Q_j^{(i-1)} \dots Q_r^{(i-1)}}{\LL_{i_\ell}^{s_\ell} \dots \LL^{s_{j-1}}_{i_{j-1}} \cdot \LL_{i-1}^{s_j+\dots+s_r}} \\
&=\sum_{i \geq 0} \sum_{j=\ell}^r \sum_{0 \leq i_\ell < \dots < i_{j-1} \leq i} (-1)^{r+1-j} \frac{Q_\ell^{(i_\ell)} \dots Q_{j-1}^{(i_{j-1})} \cdot Q_j^{(i)} \dots Q_r^{(i)}}{\LL_{i_\ell}^{s_\ell} \dots \LL^{s_{j-1}}_{i_{j-1}} \cdot \LL_i^{s_j+\dots+s_r}} \\
&=-\sum_{i \geq 0} \sum_{0 \leq i_\ell < \dots < i_{r-1} < i} \frac{Q_\ell^{(i_\ell)} \dots Q_{r-1}^{(i_{r-1})} Q_r^{(i)}}{\LL_{i_\ell}^{s_\ell} \dots \LL^{s_{r-1}}_{i_{r-1}} \LL_i^{s_r}}.
\end{align*}
Finally, we observe by \eqref{D:delta0starmodel} that finding the $(d_1 + \dots + d_\ell)$th coordinate of a vector $\delta_0((g_1,\dots,g_r)^\top)$ for $1\leq \ell\leq r$ and $g_i\in \TT$ boils down to evaluating $g_\ell$ at $t=\theta$.  Thus 
	\[ -\sum_{i \geq 0} \sum_{0 \leq i_\ell < \dots < i_{r-1} < i} \frac{Q_\ell^{(i_\ell)} \dots Q_{r-1}^{(i_{r-1})} Q_r^{(i)}}{\LL_{i_\ell}^{s_\ell} \dots \LL^{s_{r-1}}_{i_{r-1}} \LL_i^{s_r}}\bigg|_{t=\theta}=-\Gamma_{s_r} \dots \Gamma_{s_\ell} \zeta_A(s_r,\dots,s_\ell), \]
by \eqref{eq: MZV} and this finishes the proof.
\end{proof}

\begin{remark}
Jing Yu \cite{CM20} suggested that the logarithmic interpretation for MZV's obtained in Theorem \ref{theorem: star model} could be viewed as a  ``nice" integral interpretation for the MZV's, thus it may be called a linear form of Anderson logarithms.
\end{remark}

\subsection{Logarithmic interpretations for $\nu$-adic MZV's} ${}$\par \label{sec: v-adic}

Throughout this section we fix a finite place $\nu$ of $K$ which corresponds to an irreducible monic polynomial still denoted by $\nu$ of $A$. We let $K_\nu$ be the completion of $K$ at $\nu$ and let $\C_\nu$ be the completion of an algebraic closure of $K_\nu$. Let $\lvert\cdot\rvert_\nu$ be the normalized $\nu$-adic absolute value on $\C_\nu$. This $\nu$-adic absolute value extends naturally to matrices with entries in $\C_\nu$. 

This section aims to present a logarithmic interpretation for $\nu$-adic MZV's. For the depth one case, i.e. for $\nu$-adic zeta values, this was done by Anderson and Thakur (see \cite[Theorem 3.8.3]{AT90}). We mention that Chang and Mishiba in \cite{CM19} gave another interpretation for these values by combining the Anderson-Thakur dual $t$-motives and the notion of fiber coproducts. We show that their arguments can carry over to our setting. Consequently, we deduce from Theorem \ref{theorem: star model} a logarithmic interpretation of $\nu$-adic MZV's (see \cite[Theorem 6.2.4]{CM19b}). 

In what follows, we always work with a tuple $\fs=(s_1,\dots,s_r) \in \mathbb N^r$ for $r \geq 1$. We work with the $t$-module $E'^*$ introduced in \S \ref{S:Log Int for MZVs} and keep the notation of this section.

\begin{proposition} \label{proposition: nu adic convergence}
For any $\bv \in E'^*(\C_\nu)$ with $\lvert \bv \rvert_\nu<1$, $\Log_{E'^*}(\bv)$ converges $\nu$-adically in $\Lie_{E'^*}(\C_\nu)$.
\end{proposition}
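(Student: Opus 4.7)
The plan is to obtain an explicit $\nu$-adic bound on the coefficient matrices $P_n^*$ of $\Log_{E'^*}=\sum_{n\geq 0} P_n^*\tau^n$ and then exploit the super-exponential decay of $|\bv^{(n)}|_\nu=|\bv|_\nu^{q^n}$ when $|\bv|_\nu<1$. Crucially, the integrality result of the previous proposition guarantees that the data defining $E'^*$ live in $A$, so no wild $\nu$-adic denominators appear from the $t$-module itself; all denominators in $P_n^*$ come from the inversion of $\Phi'^*$.

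First, I would use the explicit formula
\[
P_n^*\,\bv^{(n)} \;=\; \delta_0\!\left(\Theta'^{*(1)}\cdots\Theta'^{*(n)}\,\iota^{-1}(\bv)^{(n)}\right),
\]
together with the fact that $\Phi'^*\in\Mat_r(A[t])$ is lower triangular with diagonal entries $(t-\theta)^{d_\ell}$. Consequently $\Theta'^*=((\Phi'^*)^{-1})^\top$ has entries in $A[t]$ localized at the powers of $(t-\theta)$, and after twisting, $\Theta'^{*(k)}$ has denominators which are powers of $(t-\theta^{q^k})$. Hence the product $\Theta'^{*(1)}\cdots\Theta'^{*(n)}$ has denominators supported only on the set $\{\theta^{q^1},\ldots,\theta^{q^n}\}$, with the exponent at $\theta^{q^k}$ bounded by a constant $M$ depending only on $d_1,\ldots,d_r$ (hence only on $\fs$).

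Second, since the denominators do not vanish at $t=\theta$ and $\delta_0$ extracts initial coefficients of the $(t-\theta)$-adic expansion, the entries of $P_n^*$ are well-defined elements of $\overline K$, whose $\nu$-adic denominators are products of powers of the quantities $[k]=\theta^{q^k}-\theta\in A$ for $1\leq k\leq n$ (the numerators all lie in $A[t]$ and hence contribute factors of $\nu$-adic size at most $1$). Writing $d=\deg\nu$, we have $|[k]|_\nu=1$ unless $d\mid k$, in which case $|[k]|_\nu=q^{-d}$. This yields a bound of the form
\[
\lvert P_n^*\rvert_\nu \;\leq\; q^{\,M d\lfloor n/d\rfloor},
\]
i.e.\ $\lvert P_n^*\rvert_\nu$ grows at most linearly in $n$ on a logarithmic scale.

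Third, for $\bv\in E'^*(\C_\nu)$ with $|\bv|_\nu<1$, write $|\bv|_\nu=q^{-s}$ with $s>0$. Because Frobenius twisting raises coordinates to the $q^n$-th power, $|\bv^{(n)}|_\nu=q^{-sq^n}$, so
\[
\lvert P_n^*\,\bv^{(n)}\rvert_\nu \;\leq\; q^{\,Md\lfloor n/d\rfloor \,-\, s q^n} \;\longrightarrow\; 0
\]
super-exponentially as $n\to\infty$. This shows that $\Log_{E'^*}(\bv)$ converges in $\Lie_{E'^*}(\C_\nu)$, as required. The main technical obstacle is keeping track of the precise denominators arising in $\Theta'^{*(1)}\cdots\Theta'^{*(n)}$ and verifying that $\delta_0$ does not introduce further pole contributions beyond the $[k]$-factors; this is where the lower triangular form of $\Phi'^*$ and the integrality of $E'^*$ are essential, as they reduce the analysis to the same kind of bookkeeping used for the Carlitz module, where $|P_n|_\nu$ grows exactly like $q^{d\lfloor n/d\rfloor}$.
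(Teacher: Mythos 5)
Your proof is correct and follows the same route as the paper's: express $P_n^*$ via $\Theta'^{*(1)}\cdots\Theta'^{*(n)}$ and $\delta_0$, bound the $\nu$-adic denominators coming from the $L_j$-factors (with $A[t]$-integral numerators supplied by the integrality of $E'^*$), get an exponential-in-$n$ bound on $|P_n^*|_\nu$, and conclude via the super-exponential decay $|\bv^{(n)}|_\nu = |\bv|_\nu^{q^n}$. One small point in your favor: your exact valuation $|[k]|_\nu = q^{-d}$ iff $d\mid k$ (so $|L_j|_\nu = |\nu|_\nu^{\lfloor j/d\rfloor}$, using squarefreeness of $[k]$ and the order of Frobenius on $A/\nu$) is both sharper than, and a correction of, the estimate $|L_j|_\nu \geq |\nu|_\nu$ as the paper states it --- that inequality is false once $j\geq 2d$ and is presumably a misprint for the per-factor bound $|[k]|_\nu \geq |\nu|_\nu$, i.e.\ $|L_j|_\nu \geq |\nu|_\nu^j$; either estimate is exponential in $n$ and thus loses to $|\bv|_\nu^{q^n}$, so the conclusion is unaffected.
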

	
\begin{proof}
We write
	\[ \Log_{E'^*}(\bv)=\sum_{i \geq 0} P_i^* \bv\twistk{i} \]
Recall that for $i \geq 0$, the $i$th coefficient of the logarithm series of $E'^*$ evaluated at $\bv \in \overline K^d$ is given by
	\[ P_i^* \bv\twistk{i} = \delta_0(\Theta'^{*(1)} \dots \Theta'^{*(i)}\iota\inv (\bv)\twistk{i}). \]
Here the matrix $B_i = \Theta'^{*(1)} \dots \Theta'^{*(i)}$ is given as follows. We have $B_i[\ell j]=0$ if $\ell>j$ ($B[\ell j]$ denotes the $(\ell,j)$th entry of a matrix $B$). Further, if $\ell=j$, then
	\[ 	B_i[\ell j]=\frac{1}{\LL_i^{s_j+\dots+s_r}}, \]
where we recall the definition of $\LL_i$ from \eqref{D:LLdef}.  For $1 \leq \ell < j \leq r$, we get
	\[ B_i[\ell j]= \sum_{0 \leq i_\ell < \dots < i_{j-1}<i} \frac{Q_\ell^{(i_\ell)} \dots Q_{j-1}^{(i_{j-1})}}{\LL_{i_\ell}^{s_\ell} \dots \LL^{s_{j-1}}_{i_{j-1}} \LL_i^{s_j+\dots+s_r}}. \]

We consider $w_k=(t-\theta)^s m_j$ (with $1 \leq j \leq r$, $0 \leq s <d_j$) which is an element of the $\overline K[\sigma]$-basis 
	\[ \bw =\{(t-\theta)^{s_1+\dots+s_r-1} m_1,\dots,(t-\theta) m_1, m_1,\dots,(t-\theta)^{s_r-1} m_r,\dots,(t-\theta) m_r,m_r \} \]
of $\calM'^*$. We note that $k=d_1+\dots+d_{j-1}+s$. The vector $w_k$ corresponds to the $k$th vector in the canonical basis of $\overline K^d$. 

Letting $P_i^*[k',k]$ the $(k',k)$th entry of $P_i^*$, we get
\begin{align*}
(P_i^*[1,k],\dots,P_i^*[d_1+\dots+d_r,k])^\top &=P_i^* w_k \\
&= \delta_0(\Theta'^{*(1)} \dots \Theta'^{*(i)} \iota\inv (w_k)\twistk{i}) \\
&= \delta_0((B_i[1,j](t-\theta^{q^i})^s,\dots,B_i[r,j](t-\theta^{q^i})^s)^\top).
\end{align*} 
Recall that the map $\delta_0$ is given explicitly by \eqref{D:delta0starmodel}.  Since we may rewrite the first $d_\ell$ terms in each coordinate of equation \eqref{E:m in sigma basis} in terms of hyperderivatives (see \cite[Lemma 2.4.1]{Pap} or \cite[\S 3.2]{CGM19}), a short calculation using hyperderivatives shows that  each $P_i^*[k',k]$ can be written in the following form
	\[ P_i^*[k',k]= \sum_{\substack{0 \leq i_\ell < \dots < i_{j-1}<i \\ c_\ell,\dots,c_{j-1},c \in \bN}} \frac{Q_{(c_\ell,\dots,c_{j-1},c)}}{\LL_{i_\ell}^{s_\ell+c_\ell} \dots \LL^{s_{j-1}+c_{j-1}}_{i_{j-1}} \LL_i^{s_j+\dots+s_r-s+c}} \bigg|_{t=\theta} \]
where $Q_{(c_\ell,\dots,c_{j-1},c)} \in \Fq[t,\theta]$ and $c_\ell+\dots+c_{j-1}+c < d_1$.

For $j \in \N$, we use the estimate
	\[ |L_j|_\nu=|\LL_j (\theta)|_\nu \geq |\nu|_\nu \]
which implies 
	\[ \lvert P_i^* \rvert_\nu \leq \lvert \nu \rvert_\nu^{-i(3d_1-1)}. \]
and thus
	\[ \lvert P_i^* \bv^{(i)} \rvert_\nu \leq \lvert \nu \rvert_\nu^{-i(3d_1-1)} \lvert \bv \rvert_\nu^{q^i}. \]
Since $|\bv|_\nu <1$, it follows that $\lvert P_i^* \bv^{(i)} \rvert_\nu$ tends to $0$ when $i \to +\infty$. This completes the proof.
\end{proof}

Recall that for $1 \leq \ell \leq r$, $d_\ell:=s_\ell+\dots+s_r$. We set 
	\[ a_\nu:=(\nu^{d_1}-1) \ldots (\nu^{d_r}-1). \]
The main result of this section is stated as follows.
\begin{theorem} \label{theorem: nu adic}
The series $\Log_{E'^*}(E'^*_{a_\nu} \bv^*_\fs)$ converges $\nu$-adically in $\Lie_{E'^*}(\C_\nu)$. Further, the $d_1$th coordinate of $\Log_{E'^*}(E'^*_{a_\nu} \bv^*_\fs)$ equals $-a_\nu \Gamma_{s_1} \ldots \Gamma_{s_r} \zeta_A(s_r,\dots,s_1)$.
\end{theorem}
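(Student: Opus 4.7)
I will follow the strategy of \cite[\S 6.2]{CM19b}, adapted to our star dual $t$-motive $E'^*$: first establish $\nu$-adic convergence of $\Log_{E'^*}(E'^*_{a_\nu}\bv^*_\fs)$ via integrality, then transport the $\infty$-adic split-logarithmic identity of Theorem~\ref{theorem: star model} into the $\nu$-adic setting via the functional equation of $\Log$, and finally read off the $d_1$th coordinate using the block-diagonal structure of $d[\theta]$ on $\calM'^*/\sigma\calM'^*$.

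For the convergence, Proposition~\ref{prop: integral} gives $\bv^*_\fs \in E'^*(A)$, so its reduction modulo $\nu$ lies in the finite $A$-module $E'^*(A/\nu)$. The lower-triangular structure of $\Phi'^*$ exhibits $E'^*$ as an iterated extension whose graded pieces are the Carlitz tensor powers $C^{\otimes d_1},\dots,C^{\otimes d_r}$; the classical computation of the Frobenius characteristic polynomial of $C^{\otimes d_j}$ at $\nu$ shows that $\nu^{d_j}-1$ annihilates $C^{\otimes d_j}(A/\nu)$. Hence $a_\nu = \prod_j(\nu^{d_j}-1)$ annihilates $E'^*(A/\nu)$, so $E'^*_{a_\nu}\bv^*_\fs \equiv 0 \pmod{\nu}$ and $|E'^*_{a_\nu}\bv^*_\fs|_\nu \leq |\nu|_\nu < 1$, which by Proposition~\ref{proposition: nu adic convergence} yields $\nu$-adic convergence of the logarithm series in $\Lie_{E'^*}(\C_\nu)$.

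For the coordinate identification, I would unwind the proof of Theorem~\ref{theorem: star model} to obtain an explicit algebraic decomposition $\bv^*_\fs = \sum_i E'^*_{a_i}\bu_i$ with each $\bu_i \in A^d$ inside the $\infty$-adic convergence disc of $\Log_{E'^*}$ and $\Log^!_{E'^*}(\bv^*_\fs) = \sum_i d[a_i]\Log_{E'^*}(\bu_i)$. Reapplying the annihilation argument to each integral $\bu_i$ shows that $E'^*_{a_\nu}\bu_i$ and $E'^*_{a_\nu a_i}\bu_i = E'^*_{a_i}(E'^*_{a_\nu}\bu_i)$ are $\nu$-adically small; then the additivity of $\Log_{E'^*}$ on the small $\nu$-adic disc together with the functional equation $\Log_{E'^*}\circ E'^*_a = d[a]\circ\Log_{E'^*}$ (applied $\nu$-adically to each $E'^*_{a_\nu}\bu_i$, as in \cite[\S 6.2]{CM19b}) give $\nu$-adically
\[ \Log_{E'^*}(E'^*_{a_\nu}\bv^*_\fs) = \sum_i d[a_i]\Log_{E'^*}(E'^*_{a_\nu}\bu_i) = d[a_\nu]\sum_i d[a_i]\Log_{E'^*}(\bu_i) = d[a_\nu]\Log^!_{E'^*}(\bv^*_\fs). \]
A short induction using $\sigma(Q\cdot x) = Q^{(-1)}\sigma(x)$ shows $(t-\theta)^{d_\ell}m_\ell \in \sigma\calM'^*$ for every $\ell$, so $t$ acts block-diagonally on $\calM'^*/\sigma\calM'^*$ as $r$ standard Jordan blocks with diagonal entry $\theta$. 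Consequently $d[a_\nu]$ is block-diagonal with $a_\nu$ on each diagonal and its $d_1$th row has $a_\nu$ as the only nonzero entry; combining with Theorem~\ref{theorem: star model}, the $d_1$th coordinate of $d[a_\nu]\Log^!_{E'^*}(\bv^*_\fs)$ equals $-a_\nu\Gamma_{s_1}\cdots\Gamma_{s_r}\zeta_A(s_r,\dots,s_1)$, as required.

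The main obstacle is the annihilation claim in the first stage: establishing that $a_\nu = \prod_j(\nu^{d_j}-1)$ kills $E'^*(A/\nu)$ requires carefully reducing $E'^*$ modulo $\nu$, exploiting the filtration by sub-$t$-modules with successive quotients $C^{\otimes d_j}$, and invoking the classical annihilation statement for each Carlitz tensor power. Once this is in hand, the $\nu$-adic additivity of $\Log_{E'^*}$ on the small disc, the formal functional equation of $\Log$, and the block-diagonal structure of $d[\theta]$ on $\calM'^*/\sigma\calM'^*$ are all essentially routine.
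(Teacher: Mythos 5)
Your argument for the $\nu$-adic convergence is a self-contained expansion of what the paper accomplishes by citing \cite[Proposition 4.1.1, Remark 4.2.4]{CM19}: you re-derive $|E'^*_{a_\nu}\bv^*_\fs|_\nu<1$ from the filtration of $E'^*$ by Carlitz tensor powers $C^{\otimes d_j}$ together with the annihilation of $C^{\otimes d_j}(A/\nu)$ by $\nu^{d_j}-1$, and then invoke Proposition~\ref{proposition: nu adic convergence}. The idea is sound, though you should verify that the filtration of dual $t$-motives passes correctly to the attached $t$-modules (the functor $\calM'\mapsto\calM'/(\sigma-1)\calM'$ is only right exact in general); this does not change the annihilation conclusion.

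There is, however, a genuine error in your coordinate identification. You claim the chain
\[ \Log_{E'^*}(E'^*_{a_\nu}\bv^*_\fs) = \sum_i d[a_i]\Log_{E'^*}(E'^*_{a_\nu}\bu_i) = d[a_\nu]\sum_i d[a_i]\Log_{E'^*}(\bu_i) = d[a_\nu]\Log^!_{E'^*}(\bv^*_\fs) \]
holds $\nu$-adically. The middle equality would require $\Log_{E'^*}(E'^*_{a_\nu}\bu_i)=d[a_\nu]\Log_{E'^*}(\bu_i)$ in $\C_\nu$, but the $\bu_i$ produced in the proof of Theorem~\ref{theorem: star model} are vectors in $A^d$ built from coefficients of $H_{s_r}$, so generically $|\bu_i|_\nu=1$ and $\Log_{E'^*}(\bu_i)$ has no $\nu$-adic meaning; Proposition~\ref{proposition: nu adic convergence} needs strict inequality $|\cdot|_\nu<1$. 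Worse, the alleged endpoint $d[a_\nu]\Log^!_{E'^*}(\bv^*_\fs)$ has $d_1$th coordinate $-a_\nu\Gamma_{s_1}\cdots\Gamma_{s_r}\zeta_A(s_r,\dots,s_1)\in K_\infty$, which is in general transcendental over $K$, while $\Log_{E'^*}(E'^*_{a_\nu}\bv^*_\fs)$ lives in $\Lie_{E'^*}(\C_\nu)$; there is no common field in which such an equality could even be posed. The second assertion of the theorem is not a $\nu$-adic identity at all: it is the $\infty$-adic split-logarithmic identity $\Log^!_{E'^*}(E'^*_{a_\nu}\bv^*_\fs)=d[a_\nu]\Log^!_{E'^*}(\bv^*_\fs)$, whose $d_1$th coordinate is read off directly from Theorem~\ref{theorem: star model} using the block-diagonality of $d[\theta]$ on $\calM'^*/\sigma\calM'^*$ (your final observation, which is correct). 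The $\nu$-adic value of $\Log_{E'^*}(E'^*_{a_\nu}\bv^*_\fs)$ is a different quantity: the remark following the theorem uses it to \emph{define} $\zeta_A(\fs)_\nu$, and it is never claimed to coincide with the $\infty$-adic vector. So your three-step $\nu$-adic chain should be discarded; only the block-diagonality argument and Theorem~\ref{theorem: star model} are needed for the second assertion, while the first assertion is exactly Proposition~\ref{proposition: nu adic convergence} applied after the annihilation estimate.
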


\begin{remark}
Following Chang and Mishiba \cite[\S 6]{CM19b} we define $\zeta_A(s_r,\dots,s_1)_\nu$ to be the value $-\frac{1}{a}$ multiplied by the $d_1$th coordinate of $\Log_{E'^*}(E'^*_a \bv^*_\fs)_\nu$ for some nonzero element $a \in A$ with $|E'^*_a \bv^*_\fs|_\nu<1$. Note that this value does not depend on the choice of $a$ by \cite[Remark 6.2.5]{CM19b}. Hence Theorem \ref{theorem: nu adic} gives a logarithmic interpretation for $\zeta_A(s_r,\dots,s_1)_\nu$.

\end{remark}

\begin{proof}[Proof of Theorem \ref{theorem: nu adic}]
By Proposition \ref{prop: integral}, $\bv^*_\fs$ is a point in $E'^*(A)$. This implies that $|E'^*_{a_\nu} \bv^*_\fs|_\nu <1$ by \cite[Proposition 4.1.1 and Remark 4.2.4]{CM19}. Thus Theorem \ref{theorem: nu adic} follows immediately from Proposition \ref{proposition: nu adic convergence}.
\end{proof}

We give a brief application of the previous theorem.  The arguments given in \cite[\S 6.4]{CM19b}, which is based on Yu's sub-$t$-module theorem (see \cite[Theorem 0.1]{Yu97}), apply without any modification so that we obtain a proof of a conjecture of Furusho over function fields. The conjecture is stated as follows: if we denote by $\overline{\mathcal Z}_n$ (resp. $\overline{\mathcal Z}_{n,\nu}$) the $\overline K$-vector space generated by all $\infty$-adic (resp. $\nu$-adic) MZV's of weight $n$, the we have a well-defined surjective $\overline K$-linear map
	\[ \overline{\mathcal Z}_n \to \overline{\mathcal Z}_{n,\nu}, \quad \zeta_A(\fs) \mapsto \zeta_A(\fs)_\nu. \] 
We refer the reader to \cite{CM19b} for more details.

\subsection{Further remarks} ${}$\par


1) In \cite{Har19} Harada introduced alternating multiple zeta values in positive characteristic which are generalizations of Thakur multiple zeta values. Our machinery can apply easily to obtain logarithmic interpretations for alternating MZV's. 

2) In \cite{GND20} we investigated algebraic relations among Goss's zeta values for function fields of elliptic curves. As one crucial step of our analysis, we had to do some period calculations for some Anderson $t$-modules (see \cite[\S 3.3]{GND20}). Note that for Drinfeld modules these calculations follows immediately from basic properties of Anderson generating functions (see for example \cite[\S 4.2]{CP12}). Our method was based on direct calculations by taking advantage of working with elliptic curves. The motivation of this paper grows from our desire to generalize the aforementioned arguments for general curves. We expect that the method of this paper would provide a general approach to period calculations in our work in progress. 

\section{Relations with the works of Anderson-Thakur and Chang-Mishiba} \label{sec: examples}

This section is devoted to comparing the $t$-modules associated to the star dual $t$-motives defined in \S \ref{S:Log Int for MZVs} with those arising from the works of Anderson-Thakur \cite{AT90} and Chang-Mishiba \cite{CM19b}. We start with some examples given by Chang-Mishiba \cite{CM19b} and observe that, in these examples, for the same multiple zeta value, the $t$-module constructed by the star model has smaller dimension. Next we prove that indeed this inequality always holds. Finally, we determine integral points in special cases which covers all the examples given in \cite{AT90}.

\subsection{Setup} ${}$\par

In this section, let $\fs=(s_1,\dots,s_r) \in \N^r$ be a tuple with $r \geq 1$. For $1 \leq \ell \leq r$, we put $d_\ell=s_\ell+\dots+s_r$. In \cite{CM19b} Chang and Mishiba gave a logarithmic interpretation for $\zeta_A(\fs)$ (see \cite[Theorem 1.4.1]{CM19b}). More precisely, they constructed a $t$-module $G_\fs$ defined over $K$, a special point denoted by $\bv^{CM}_\fs \in G_\fs(K)$ and a vector $\bz^{CM}_\fs \in \Lie G_\fs(\bC_\infty)$ such that 

1) The $d_1$th coordinate of $\bz^{CM}_\fs \in G_\fs(K)$ equals $\Gamma_{s_1} \dots \Gamma_{s_r} \zeta_A(\fs)$.

2) $\exp_{G_s}(\bz^{CM}_\fs)=\bv^{CM}_\fs$.

We put $\fs'=\text{inv}(\fs)=(s_r,\dots,s_1)$. Then in \S \ref{S:Log Int for MZVs} we constructed a $t$-module $E'^*$ defined over $A$ and a special integral point $\bv^*_{\fs'} \in E'^*(A)$. Theorem \ref{theorem: star model} gives a split-logarithmic identity for $E'^*$:
\begin{align*}
\Log_{E'^*}^!(\bv^*_{\fs'})
=\delta_0 \begin{pmatrix}
- \mathfrak L(s_1,\dots,s_r) \Omega^{-(s_1+\dots+s_r)} \\
- \mathfrak L(s_1,\dots,s_{r-1}) \Omega^{-(s_1+\dots+s_{r-1})} \\
\vdots \\
- \mathfrak L(s_1) \Omega^{-s_1} 
\end{pmatrix}.
\end{align*}
In particular, the $d_1$th coordinate of the $\Log_{E'^*}^!(\bv^*_{\fs'})$ equals $- \Gamma_{s_1} \dots \Gamma_{s_r} \zeta_A(\fs)$.

In the depth-one case, i.e. when $r=1$ and $s=(n)$, both constructions coincide with that of Andersond and Thakur given in \cite{AT90}. The associated $t$-module is the $n$th tensor power $C^{\otimes n}$ of the Carlitz module. In \cite{AT90} they denoted by $\mathbf{Z}_n \in C^{\otimes n}(A)$ the special point and by $\bz_n \in \Lie C^{\otimes n}(\bC_\infty)$ the associated vector.

\subsection{Relation with the work of Chang-Mishiba} ${}$\par \label{sec: relation with CM}

We first give examples to compare the previous logarithmic interpretations for MZV's given by Chang-Mishiba \cite{CM19b} and by the star dual $t$-motives (see Theorem \ref{theorem: star model}). The two examples are taken from \cite{CM19b}. We observe that, in both cases, the Anderson $t$-module arising from the star model has smaller dimension and the associated integral point is ``simpler". 

\begin{example}
The following example is taken from \cite[Example 5.4.1]{CM19b}. We consider general $q$ and $\fs=(s_1=1,s_2=1,s_3=2)$. We have $\Gamma_1=\Gamma_2=1$ and $H_1=H_2=1$. 

On the one hand, the $t$-module $G_\fs$ has dimension $10$ and is given by
\begin{align*}
(G_\fs)_\theta=\left(
\begin{array}{c c c c | c | c  c | c c | c}
\theta & 1 &  &  &  &  &  &  &  &   \\
& \theta & 1 &  &  &  &  &  &  &   \\
&  & \theta & 1 &  &  &  &  &  &   \\
\tau &  &  & \theta & -\tau & -\tau &  &  -\tau &  & \tau   \\
\hline
&  &  &  & \theta + \tau &  &  &  &  &   \\
\hline
&  &  &  &  & \theta & \tau &  &  &   \\
&  &  &  &  & 1 & \theta & &  &   \\
\hline
&  &  &  &  &  &  & \theta & 1 &    \\
&  &  &  &  &  &  & \tau & \theta & -\tau   \\
\hline
&  &  &  &  &  &  &  &  & \theta + \tau
\end{array}
\right).
\end{align*}
Moreover, 
	\[ \bv^{CM}_\fs=(0,0,0,0,1,0,1,0,-1,1)^\top, \]
and 
	\[ \bz^{CM}_\fs=(*,*,*,\zeta_A(1,1,2),\text{Li}^*_1(1),*,\text{Li}^*_2(1),*,\text{Li}^*_{(1,1)}(1,1),\text{Li}^*_1(1))^\top. \]
	
On the other hand, the $t$-module $E'^*$ has dimension $7$ and is defined by
\begin{align*}
E'^*_\theta=\left(
\begin{array}{c c c c | c  c  | c }
\theta & 1 &  &  &  &  &    \\
& \theta & 1 &  &  &  &   \\
&  & \theta & 1 &  &  &    \\
\tau &  &  & \theta & \tau &  &    \\
\hline
&  &  &  & \theta & 1 &    \\
&  &  &  &  \tau & \theta & \tau   \\
\hline
&  &  &  &   &  & \theta 
\end{array}
\right).
\end{align*}
The special point is given by
	\[ -\bv^*_{\fs'}=(0,0,0,0,0,0,1)^\top. \]
By Theorem \ref{theorem: star model} we have
	\[ \Log_{E'^*}(-\bv^*_{\fs'})=(*,*,*,\zeta_A(1,1,2),*,\zeta_A(1,1),\zeta_A(1))^\top. \]
	
Note that 
	\[ \dim E'^*=7 < \dim G_\fs=10. \]
\end{example}

\begin{example}
The following example is taken from \cite[Example 5.4.2]{CM19b}. We take $q=2$ and $\fs=(s_1=1,s_2=3)$. We have $\Gamma_1=1,\Gamma_3=\theta^2+\theta$ and $H_1=1,H_3=t+\theta^2$. 

On the one hand, the $t$-module $G_\fs$ has dimension $6$ and is given by
\begin{align*}
(G_\fs)_\theta=\left(
\begin{array}{c c c c | c  | c }
\theta & 1 &  &  &  &   \\
& \theta & 1 &  &  &  \\
&  & \theta & 1 &  &   \\
\tau &  &  & \theta & -\theta^2 \tau & -\tau  \\
\hline
&  &  &  & \theta+\tau &   \\
\hline
&  &  &  &   & \theta+\tau
\end{array}
\right).
\end{align*}
Further
	\[ \bv^{CM}_\fs=(0,0,0,0,1,1,\theta+1)^\top, \]
and
	\[ \bz^{CM}_\fs=(*,*,*,(\theta^2+\theta) \zeta_A(1,3),\text{Li}^*_1(1),\theta \text{Li}^*_1(1))^\top. \]

On the other hand, the $t$-module $E'^*$ has dimension $5$ and is defined by
\begin{align*}
E'^*_\theta=\left(
\begin{array}{c c c c | c  }
\theta & 1 &  &  &  \\
& \theta & 1 &  &  \\
&  & \theta & 1 &  \tau \\
\tau &  &  & \theta & (\theta^2+\theta)\tau \\
\hline
&  &  &  & \theta 
\end{array}
\right)
\end{align*}
The special point is given by
	\[ -\bv^*_{\fs'}=(0,0,0,0,1)^\top. \]
By Theorem \ref{theorem: star model} we have
	\[ \Log_{E'^*}(-\bv^*_{\fs'})=(*,*,*,(\theta^2+\theta) \zeta_A(1,3),(\theta^2+\theta) \zeta_A(1))^\top. \]
	
Note that 
	\[ \dim E'^*=5 < \dim G_\fs=6. \]
\end{example}

The rest of this section aims to prove that an inequality of dimensions always holds.

\begin{proposition} \label{prop: dimension}
With the above notation, we have $\dim G_\fs \geq \dim E'^*$.

The equality holds if and only if either $r=1$ or $r=2$ and $\fs=(s_1,s_2)$ with $1 \leq s_1, s_2 \leq q$.
\end{proposition}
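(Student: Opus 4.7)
The plan is to compute $\dim E'^*$ and $\dim G_\fs$ in closed form and then compare the resulting expressions.

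First I would read off $\dim E'^*$ directly from the construction in \S\ref{S:Log Int for MZVs}. Since the star dual $t$-motive $\calM'^*$ is built from the reversed tuple $\fs'=(s_r,\dots,s_1)$, its rank as a free $\overline K[\sigma]$-module equals
\[
\dim E'^* \;=\; \sum_{\ell=1}^{r}\bigl(s'_\ell+\dots+s'_r\bigr) \;=\; \sum_{\ell=1}^{r}\bigl(s_{r-\ell+1}+\dots+s_1\bigr) \;=\; \sum_{k=1}^{r}(r-k+1)\,s_k.
\]

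Next I would extract $\dim G_\fs$ from the Chang--Mishiba construction in \cite[\S 5]{CM19b}. Their $t$-module is assembled as an iterated fiber coproduct organized around the stuffle identity
\[
\zeta_A(\fs) \;=\; (-1)^{r-1}\zeta^*_A(s_r,\dots,s_1) + \sum_{\ell=2}^{r}(-1)^{r-\ell}\,\zeta_A(s_1,\dots,s_{\ell-1})\,\zeta^*_A(s_r,\dots,s_\ell).
\]
By tracking the blocks in their construction---one principal block realizing $\zeta^*_A(\fs')$ and one additional block for each polylogarithmic factor in the iterated stuffle (as illustrated in the two examples preceding the proposition)---I would obtain a closed formula of the shape $\dim G_\fs = \dim E'^* + R_\fs$, where $R_\fs\ge 0$ splits naturally into a ``depth'' contribution that is present precisely when $r\ge 3$ and a ``twist'' contribution controlled by the Anderson--Thakur polynomials $H_{s_i}$.

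For the equality analysis, the key input is the identity $H_n\equiv 1$ for $1\le n\le q$, which I would verify from the generating series
\[
\sum_{n\ge 1}\frac{\alpha_n(t)}{\Gamma_n}x^n \;=\; x\Bigl(1-\sum_{j\ge 0}\frac{\gamma_j(t)}{D_j}\,x^{q^j}\Bigr)^{-1}
\]
by observing that for $n\le q$ only the $j=0$ contribution $\gamma_0=1$ enters the coefficient of $x^n$, forcing $\alpha_n=\Gamma_n$ and hence $H_n=1$. Combining this with the formula for $R_\fs$, I would check the four cases: for $r=1$ both constructions collapse to $C^{\otimes s_1}$, so $R_\fs=0$; for $r=2$ with $s_1,s_2\le q$, both Anderson--Thakur polynomials are trivial and the depth contribution is absent, again giving $R_\fs=0$; for $r\ge 3$ the depth part is strictly positive; and for $r=2$ with some $s_i>q$, the non-constant $H_{s_i}$ forces the twist part to be strictly positive.

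The main obstacle is the precise computation of the correction term $R_\fs$ from the rather intricate fiber-coproduct construction of \cite{CM19b}; once $R_\fs$ is pinned down, comparison with the clean expression $\sum_{k=1}^{r}(r-k+1)s_k$ for $\dim E'^*$ reduces to elementary arithmetic, and the dichotomy in the ``if and only if'' statement follows directly from the vanishing analysis of $R_\fs$ sketched above.
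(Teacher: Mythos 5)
Your proposal follows the same overall strategy as the paper: compute $\dim E'^*$ explicitly (your formula $\sum_{k=1}^{r}(r-k+1)s_k$ is correct), express $\dim G_\fs$ via the Chang--Mishiba construction, and compare. However, you explicitly leave the crucial step --- pinning down the correction term $R_\fs$ --- as an unresolved obstacle, and this is precisely where the content of the proposition lies. The paper closes the gap by citing \cite[Theorem 5.2.5]{CM19b}, which gives an explicit decomposition
\[
\Gamma_{s_1}\cdots\Gamma_{s_r}\,\zeta_A(\fs)=\sum_\ell b_\ell\,(-1)^{\mathrm{dep}(\fs_\ell)-1}\,\Li^*_{\fs_\ell}(\bu_\ell)
\]
with tuples $\fs_\ell$ all of weight $w(\fs)$, and then invokes the resulting dimension formula $\dim G_\fs=w(\fs)+\sum_\ell\epsilon(\fs_\ell)$, where $\epsilon(\fs_\ell)$ is the partial-sum quantity whose analogue for $\fs$ yields $\dim E'^*=w(\fs)+\epsilon(\fs)$. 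Since one of the tuples $\fs_{\ell_0}$ is $\fs$ itself and each $\epsilon(\fs_\ell)\ge 0$, the inequality is immediate; equality holds iff every other $\fs_\ell$ has depth one, and it is the structure of the CM19b decomposition (in turn driven by the Anderson--Thakur polynomials $H_{s_i}$) that identifies this with the dichotomy $r=1$ or $r=2$ with $s_1,s_2\le q$. Your plan to re-derive this formula by tracking fiber-coproduct blocks and splitting $R_\fs$ into a depth part and a twist part is a plausible way to reconstruct the CM19b result, and the supporting observation that $H_n=1$ for $1\le n\le q$ is correct, but without actually carrying out that reconstruction (or citing the theorem) the argument is a blueprint rather than a proof. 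In particular, the claim that both the depth and twist contributions vanish exactly in the stated cases is precisely what needs a reference or a computation, and your proposal stops short of either.
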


\begin{proof}
We set $w(\fs):=s_1+\ldots+s_r$ called the weight of $\fs$. By \cite[Theorem 5.2.5]{CM19b} there are explicit tuples $\fs_\ell \in \N^{\text{dep}(\fs_\ell)}$ with $w(\fs_\ell)=w(\fs)$, $\text{dep}(\fs_\ell) \leq r$, explicit coefficients $b_\ell \in A^*$ and vectors $\bu_\ell \in A^{\text{dep}(\fs_\ell)}$ so that	
	\[ \Gamma_{s_1} \ldots \Gamma_{s_r} \zeta_A(\fs) = \sum_\ell b_\ell \cdot (-1)^{\text{dep}(\fs_\ell)-1} \text{Li}^*_{\fs_\ell} (\bu_\ell). \]
	
Let $\fs_\ell$ be such a tuple. We write $\fs_\ell=(\fs_{\ell,1},\ldots,\fs_{\ell,\text{dep}(\fs_\ell)})$ and set
	\[ \epsilon(\fs_\ell):=(\fs_{\ell,1}+\ldots+\fs_{\ell,\text{dep}(\fs_\ell)-1})+\ldots+\fs_{\ell,1}. \]
Note that $\epsilon(\fs_\ell)$ belongs to $\N$. Then it is shown in \cite{CM19b} that
	\[ \dim G_\fs = (s_1+\ldots+s_r) + \sum_\ell \epsilon(\fs_\ell). \]

By the construction of the star model associated to $\zeta_A(\fs)$ we see that
	\[ \dim E'^* = (s_1+\ldots+s_r) + \epsilon(\fs). \]
The proposition follows from the fact that there exists $\ell_0$ such that $\fs_{\ell_0}=\fs$.

The equality holds if and only if $\fs_\ell = (s_1+\ldots+s_r)$ for $\ell \neq \ell_0$, which happens only when $r=1$ or $r=2$ and $\fs=(s_1,s_2)$ with $1 \leq s_1, s_2 \leq q$.
\end{proof}

\subsection{Relation with the work of Anderson-Thakur} ${}$\par \label{sec: relation with AT}

In this section we extend the previous examples to obtain the following result which explicitly computes integral points and covers all the examples given by Anderson-Thakur when $r=1$ (see \cite[page 187]{AT90}). 

By direct calculations we prove that for  $1 \leq n \leq q$, we have $H_n(t)=1$ and that for $q+1 \leq n \leq q^2$, we put $k=\lfloor \frac{n-1}{q} \rfloor$ and get
\begin{align*}
H_n(t) = \sum_{j=0}^k {n-jq+j-1 \choose j} (t^q-t)^{k-j} (t^q-\theta^q)^j.
\end{align*}
In particular, for $1 \leq n \leq q^2$, we always have
	\[ \deg H_n(t) \leq kq \leq n-1. \]

\begin{corollary}
Let $\fs=(s_1,\dots,s_r) \in \mathbb N^r$ be a tuple for $r \geq 1$ such that $1 \leq s_1 \leq q^2$. We denote by $\fs'=\text{inv}(\fs)=(s_r,\dots,s_1)$. If we express $H_{s_1}(t)=\sum_{i=0}^{s_1-1} a_i (t-\theta)^i$, then
	\[ -\bv^*_{\fs'}=(0,\dots,0,a_{s_1-1},\dots,a_1,a_0)^\top. \]
Further, this point belongs to the domain of convergence of $\Log_{E'^*}$.
\end{corollary}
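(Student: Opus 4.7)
The plan is to identify the integral point $-\bv^*_{\fs'}$ by specializing Equation~\eqref{eq: w_s} from the proof of Theorem~\ref{theorem: star model}. Since the star model for the reversed tuple $\fs'=(s_r,\dots,s_1)$ uses $\fQ=(H_{s_r},\dots,H_{s_1})$, its last polynomial is $Q_r=H_{s_1}$, and that equation gives
\[\alpha(\calM^*_{\fs'}) = -\sigma(H_{s_1}\, m_r), \qquad\text{hence}\qquad -\bv^*_{\fs'}=\delta_1(\sigma(H_{s_1}\, m_r)).\]
Because $1\le s_1\le q^2$, the explicit formula for $H_n$ recalled just above the corollary yields $\deg_t H_{s_1}\le s_1-1$, so the expansion $H_{s_1}(t)=\sum_{i=0}^{s_1-1} a_i(t-\theta)^i$ terminates at order $s_1-1$.

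Next, I would read off the coordinates directly. By the definition of the $\overline K[\sigma]$-basis $\bw$, the last $s_1$ basis vectors of $\calM'^*$ are precisely $(t-\theta)^{s_1-1}m_r,\dots,(t-\theta)m_r,m_r$. Thus in the $\overline K[\sigma]$-basis expansion, $H_{s_1}m_r$ places the scalar $a_i$ at the slot corresponding to $(t-\theta)^i m_r$ and zero elsewhere. Multiplying on the left by $\sigma$ and using the skew relation $\sigma c=c^{1/q}\sigma$ converts each $a_i$ into $a_i^{1/q}\sigma$ at the same slot. Applying $\delta$, which sends $c_1\sigma\mapsto c_1^q$, yields $a_i$, and assembling these across the last $s_1$ positions produces the vector $(0,\dots,0,a_{s_1-1},\dots,a_1,a_0)^\top$.

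For the convergence claim, I would invoke the criterion already used in the proof of Theorem~\ref{theorem: star model}: via \cite[Lemma 4.2.1]{CGM19}, the series $\Log_{E'^*}$ converges at a point whose nonzero coordinates lie entirely in the $m_r$-block (of depth $s_1$) provided $|a_k|_\infty<q^{\,s_1-k+s_1/(q-1)}$ for $0\le k\le s_1-1$. The main obstacle is that the Gauss-norm bound $\lVert H_{s_1}\rVert<q^{s_1 q/(q-1)}$ from \cite[3.7.3]{AT90} alone is too loose. I would instead use the explicit formula for $H_n$ valid in the range $n\le q^2$, substitute $t=\theta+s$, and simplify via $t^q-\theta^q=s^q$ together with $t^q-t=[\theta]+s^q-s$ where $[\theta]=\theta^q-\theta$. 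Expanding multinomially and tracking the $\theta$-degree of every contribution to the coefficient of $s^k$ gives the sharper bound $|a_k|_\infty\le q^{qk_0-k}$ with $k_0=\lfloor(s_1-1)/q\rfloor$. Since $qk_0\le s_1-1<s_1q/(q-1)$, the strict inequality $q^{qk_0-k}<q^{\,s_1-k+s_1/(q-1)}$ follows, handling all cases except $s_1\le q$, where $H_{s_1}=1$ makes the claim trivial.
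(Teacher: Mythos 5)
Your derivation of the coordinate formula is the same route the paper takes: specialize Equation~\eqref{eq: w_s} to the $\fs'$-model to get $\alpha(\calM^*_{\fs'})=-\sigma(H_{s_1}m_r)$, expand $H_{s_1}$ in powers of $(t-\theta)$ (valid because $\deg_t H_{s_1}\le s_1-1$ when $s_1\le q^2$), and read off $\delta_1$. Your extra step of tracking the $\sigma c = c^{1/q}\sigma$ relation and $\delta(c\sigma)=c^q$ explicitly is correct and makes the cancellation of the $q$-powers transparent.

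Where you genuinely go beyond the paper is the convergence claim. The paper's own proof of this corollary establishes only the coordinate identity and then declares the proof finished; it never checks that $-\bv^*_{\fs'}$ satisfies the domain condition from \cite[Lemma 4.2.1]{CGM19}. You correctly diagnose that the Gauss-norm bound $\lVert H_{s_1}\rVert < q^{s_1 q/(q-1)}$ cited in the proof of Theorem~\ref{theorem: star model} is insufficient here: it only bounds the coefficients in the $t$-expansion (which is why the theorem gives a \emph{split}-logarithmic identity, decomposing $-\sigma(H_{s_1}m_r)$ across several $\bu_i$'s each supported at the single slot $m_r$), whereas the corollary needs a bound on the Taylor coefficients $a_k$ at $t=\theta$. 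Your sharper estimate is right: substituting $t=\theta+s$ into the explicit formula for $H_n$, $q+1\le n\le q^2$, one has $t^q-\theta^q=s^q$ and $t^q-t=s^q-s+[1]$ with $[1]=\theta^q-\theta$; a multinomial expansion shows the $\theta$-degree of the coefficient of $s^k$ is $q\ell = qk_0 - k - (q-1)m\le qk_0-k$ where $k_0=\lfloor(s_1-1)/q\rfloor$, so $|a_k|_\infty\le q^{qk_0-k}$. Since $qk_0\le s_1-1< s_1q/(q-1)$, the required inequality $|a_k|_\infty<q^{(s_1-k)+s_1/(q-1)}$ follows, and the case $s_1\le q$ is trivial since $H_{s_1}=1$. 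This fills a real gap: the corollary's domain-of-convergence assertion is strictly stronger than the split-logarithmic identity of Theorem~\ref{theorem: star model}, and the paper does not supply an argument for it.

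One minor remark: your bound $|a_k|_\infty\le q^{qk_0-k}$ actually covers $s_1\le q$ as well (there $k_0=0$, $|a_0|=1=q^0$, and the strict inequality $1<q^{s_1+s_1/(q-1)}$ holds), so the split into cases is unnecessary though harmless.
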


\begin{proof}
We should keep in mind that we are working with the star model attached to $\fs'$. Since $Q_r:=H_{s_r'}=H_{s_1}=\sum_{i=0}^{s_1-1} a_i (t-\theta)^i$, Equation \eqref{eq: w_s} implies that
\begin{align*}
-\bv^*_{\fs'} &= \delta_1(\sigma Q_r m_r) \\
&= \delta_1(\sigma \sum_{i=0}^{s_1-1} a_i (t-\theta)^i m_r) \\
&=(0,\dots,0,a_{s_1-1},\dots,a_1,a_0)^\top.
\end{align*}
The proof is finished.
\end{proof}

\begin{remark}
1) When $r=1$, we recover the examples given by Anderson-Thakur when $r=1$ (see \cite[page 187]{AT90}). In this case, we take $r=1$ and $\fs=(n)$ with $1 \leq n \leq q^2$, hence $\fs'=\fs=(n)$. We see that the point $-\bv^*_{\fs'}$ coincides with the point $\mathbf{Z}_n$ defined by Anderson and Thakur. If we express $H_n(t)=\sum_{i=0}^{n-1} a_i (t-\theta)^i$, then
	\[ \mathbf{Z}_n=(a_{n-1},\dots,a_1,a_0)^\top. \]
	
2) Let $\fs=(s_1,\dots,s_r) \in \mathbb N^r$ be a tuple for $r \geq 1$ such that $1 \leq s_1 \leq q$. Thus $\fs'=\text{inv}(\fs)=(s_r,\dots,s_1)$. Since $H_{s_1}(t)=1$, we get  
\begin{align*}
-\bv^*_{\fs'}=(0,\dots,0,1)^\top= \begin{pmatrix}
0 \\
\vdots \\
0 \\
1 
\end{pmatrix}.
\end{align*}
	
3) Let $\fs=(s_1,\dots,s_r) \in \mathbb N^r$ be a tuple for $r \geq 1$ such that $q+1 \leq s_1 \leq 2q$. Thus $\fs'=\text{inv}(\fs)=(s_r,\dots,s_1)$. It follows that
\begin{align*}
H_{s_1}(t)=(t^q-t)+s_1(t^q-\theta^q)=(s_1+1)(t-\theta)^q-(t-\theta)+\theta^q-\theta.
\end{align*}
Then
\begin{align*}
-\bv^*_{\fs'} = \begin{pmatrix}
0 \\
\vdots \\
0 \\
s_1+1 \\
0 \\
\vdots \\
0 \\
-1 \\
\theta^q-\theta
\end{pmatrix}
\end{align*}
where $\theta^q-\theta$ is the $d$th coordinate and $s_1+1$ is the $(d-q)$th coordinate.
\end{remark}



\end{document}